\theoremstyle{plain}
\newtheorem{theorem}{Theorem}[section]
\newtheorem{thm}[theorem]{Theorem}
\newtheorem{cor}[theorem]{Corollary}
\newtheorem{prop}[theorem]{Proposition}
\newtheorem{lem}[theorem]{Lemma}
\newtheorem*{theorem*}{Theorem}
\theoremstyle{definition}
\newtheorem{rem}[theorem]{Remark}
\newtheorem{defn}[theorem]{Definition}
\newtheorem{eg}[theorem]{Example}
\newtheorem{egs}[theorem]{Examples}
\newtheorem{notation}[theorem]{Notation}
\newcommand{\bN}{{\mathbb{N}}}
\newcommand{\bT}{{\mathbb{T}}}
\newcommand{\bZ}{{\mathbb{Z}}}
  \newcommand{\B}{{\mathcal{B}}}
  \newcommand{\G}{{\mathcal{G}}}
  \newcommand{\K}{{\mathcal{K}}}
\renewcommand{\L}{{\mathcal{L}}}
  \newcommand{\N}{{\mathcal{N}}}
\renewcommand{\O}{{\mathcal{O}}}
  \newcommand{\Q}{{\mathcal{Q}}}
\renewcommand{\S}{{\mathcal{S}}}
  \newcommand{\T}{{\mathcal{T}}}
\newcommand{\fs}{{\mathfrak{s}}}
\newcommand{\ft}{{\mathfrak{t}}}
\newcommand{\Be}{{\mathbf{e}}}
\newcommand{\upchi}{{\raise.35ex\hbox{\ensuremath{\chi}}}}
\newcommand{\qforal}{\quad\text{for all}\quad}
\newcommand{\Aut}{\operatorname{Aut}}
\newcommand{\id}{{\operatorname{id}}}
\newcommand{\spn}{\operatorname{span}}
\newcommand{\ca}{\mathrm{C}^*}
\newcommand{\mt}{\varnothing}
\newcommand{\ol}{\overline}
\begin{document}
\title[Self-Similar $k$-Graph C*-algebras]{Self-Similar $k$-Graph C*-algebras}
\author[H. Li]{Hui Li}
\address{Hui Li,
Research Center for Operator Algebras and Shanghai Key Laboratory of Pure Mathematics and Mathematical Practice, Department of Mathematics, East China Normal University, 3663 Zhongshan North Road, Putuo District, Shanghai 200062, China}
\email{lihui8605@hotmail.com}
\author[D. Yang]{Dilian Yang$^*$}\thanks{${}^*$ Corresponding author.}
\address{Dilian Yang,
Department of Mathematics $\&$ Statistics, University of Windsor, Windsor, ON
N9B 3P4, CANADA}
\email{dyang@uwindsor.ca}

\thanks{The first author was partially supported by Research Center for Operator Algebras of East China Normal University and was partially supported by Science and Technology Commission of Shanghai Municipality (STCSM), grant No. 13dz2260400.}
\thanks{The second author was partially supported by an NSERC Discovery Grant 808235.}

\begin{abstract}
In this paper, we introduce a notion of a self-similar action of a group $G$ on a $k$-graph $\Lambda$, and associate it a universal C*-algebra $\O_{G,\Lambda}$.
We prove that $\O_{G,\Lambda}$ can be realized as the Cuntz-Pimsner algebra of a product system. If $G$ is amenable and the action is pseudo free, then $\O_{G,\Lambda}$ is shown to be isomorphic to a ``path-like" groupoid C*-algebra.
This facilitates studying the properties of $\O_{G,\Lambda}$.
We show that $\O_{G,\Lambda}$ is always nuclear and satisfies the Universal Coefficient Theorem;
we characterize the simplicity of $\O_{G,\Lambda}$ in terms of the underlying action; and we prove that, whenever $\O_{G,\Lambda}$ is simple, there is a dichotomy:
it is either stably finite or purely infinite, depending on whether $\Lambda$ has nonzero graph traces or not.
Our main results generalize the recent work of Exel and Pardo on self-similar graphs.
\end{abstract}

\subjclass[2010]{46L05}
\keywords{Self-similar; $k$-Graph; Groupoid, Groupoid C*-algebra;  Kirchberg algebra; Product system}

\maketitle

\section{Introduction}

Let $G$ be a discrete (countable) group, and $(E,r,s)$ be a finite directed graph with no sources. In \cite{EP17}, Exel-Pardo introduced a notion of a self-similar action of $G$
on $E$, which naturally generalizes the notion of self-similar groups (see, e.g., \cite{Nek05}).
The main object they were interested in is $\O_{G,E}$, which is a unital universal C*-algebra generated by a unitary representation of $G$ and a Cuntz-Krieger representation of $E$,
such that these two representations are compatible with the given self-similar action $(G,E)$. Surprisingly,
this construction embraces many important and interesting C*-algebras, such as
unital Katsura algebras \cite{Kat08, Kat082} and certain C*-algebras of self-similar groups constructed by Nekrashevych \cite{Nek09}.
To study $\O_{G,E}$, Exel-Pardo in \cite{EP17} associated the self-similar action $(G,E)$ an inverse semigroup $\S_{G,E}$.
It turns out that its tight groupoid $\G_{\text{tight}}(\S_{G,E})$ plays a vital role in the study of $\O_{G,E}$ in \cite{EP17}.
To be a little bit more precise, Exel-Pardo showed that
$\O_{G,E}$ is isomorphic to $\ca(\G_{\text{tight}}(\S_{G,E}))$. So studying $\O_{G,E}$ pinpoints $\G_{\text{tight}}(\S_{G,E})$.
Thus there is no surprise that the paper \cite{EP17} heavily depends on the machinery
on inverse semigroups developed in \cite{Exe08, EP16}. Among other results, in \cite{EP17} Exel-Pardo characterized the minimality and topological principality (resp.~local contractingness) of
$\G_{\text{tight}}(\S_{G,E})$,  and so the simplicity (resp.~pure infiniteness) of $\O_{G,E}$ when $G$ is amenable.

To give a more concrete description of $\G_{\text{tight}}(\S_{G,E})$, Exel-Pardo associated a given pseudo free self-similar action $(G,E)$ a
``path-like" groupoid $\G_{G,E}$. This is a subgroupoid of a tail equivalence with lag group $\check{G}\rtimes_\rho \bZ$,
where $\check{G}$ is the corona $G^\infty/G^{(\infty)}$ and $\rho$ is the ``right shift".
So the new feature here is that the lag is not an integer as graph C*-algebras, but an element of $\check{G}\rtimes_\rho \bZ$.
Then it is shown that $\G_{\text{tight}}(\S_{G,E})$ is isomorphic to $\G_{G,E}$.

In this paper, we introduce self-similar group actions on $k$-graphs, which are higher-dimensional analogues of self-similar group actions on directed graphs.
In higher-dimensional cases, in general the construction of the inverse semigroup $\S_{G,E}$ in \cite{EP17}
mentioned above does not apply, and so unlike \cite{EP17} one can not apply the machinery in \cite{Exe08, EP16}.
Our strategies are the following. As in \cite{EP17}, we associate a ``path-like" groupoid $\G_{G,\Lambda}$, called a self-similar path groupoid,
to a given self-similar action of a group $G$ on a $k$-graph $\Lambda$. This generalizes the construction of $\G_{G,E}$ in the directed graph case.
Then we make full use of $\G_{G,\Lambda}$ directly. To obtain that $\O_{G,\Lambda}$ is isomorphic to $\ca(\G_{G,\Lambda})$,
we first construct a product system $X_{G,\Lambda}$
over $\bN^k$. It turns out that we have the following relations: $\O_{G,\Lambda}\cong\O_{X_{G,\Lambda}}$,
$\N\O_{X_{G,\Lambda}}\twoheadrightarrow\O_{G,\Lambda}$, and $\O_{G,\Lambda}\twoheadrightarrow\ca(\G_{G,\Lambda})$.
Then invoking a deep result on the uniqueness on the Cuntz-Nica-Pimsner algebra
$\N\O_{X_{G,\Lambda}}$ of  Carlsen-Larsen-Sims-Vittadello in \cite{CLSV11},
we are able to prove that $\N\O_{X_{G,\Lambda}}$ is isomorphic to $\ca(\G_{G,\Lambda})$, and so
all four associated C*-algebras are isomorphic to each other:
$\O_{G,\Lambda}\cong\O_{X_{G,\Lambda}}\cong\mathcal{NO}_{X_{G,\Lambda}}\cong \ca(\G_{G,\Lambda})$.
Through $\ca(\G_{G,\Lambda})$, we show that if the self-similar action is pseudo free and $G$ is amenable, then
(i) $\O_{G,\Lambda}$ is always nuclear and satisfies the Universal Coefficient Theorem (UCT) \cite{RS87}; (ii) the simplicity
of $\O_{G,\Lambda}$ can be characterized in terms of the action itself and $\Lambda$; (iii) when $\O_{G,\Lambda}$
is simple, $\O_{G,\Lambda}$ has a dichotomy: it is either stably finite or purely infinite.
In particular, whenever $\O_{G,\Lambda}$ is simple and purely infinite, it is classifiable by its K-theory thanks to the Kirchberg-Phillips classification theorem \cite{Kir94, Phi00}.
We should mention that (ii) and (iii) are proved by applying some  recent important results on the C*-algebras of groupoids (e.g., \cite{BL17, BCFS14, RS17}).
On the way to our main results, we also obtain that $\G_{G,\Lambda}$ is amenable as well.

To summarize, our main results are the following:

\begin{theorem*}[Theorems \ref{T:OO} and \ref{T:ONC}]

Let $\Lambda$ be a $k$-graph such that $\Lambda^0$ is finite, and $G$ be a self-similar group on $\Lambda$.  Then we have the following:
\begin{itemize}
\item[(i)]
$\mathcal{O}_{G,\Lambda}\cong\mathcal{O}_{X_{G,\Lambda}}$.

\item[(ii)] If, furthermore, $G$ is amenable and the action is pseudo free, then
\[
\O_{G,\Lambda}\cong\O_{X_{G,\Lambda}}\cong\mathcal{NO}_{X_{G,\Lambda}}\cong \ca(\G_{G,\Lambda}).
\]
\end{itemize}

\end{theorem*}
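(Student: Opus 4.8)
The plan is to compare universal properties, and for the chain of isomorphisms in (ii) to run a gauge-invariant uniqueness argument on the Cuntz--Nica--Pimsner algebra. For (i), recall that $X_{G,\Lambda}$ is a product system over $\bN^k$ whose coefficient algebra $A$ is the $\ca$-algebra generated by the vertex projections $\{p_v:v\in\Lambda^0\}$ together with a unitary copy of $G$, and that for each colour $i$ its degree-$e_i$ fibre carries a distinguished spanning family indexed by the colour-$i$ edges, on which $A$ acts on the left via the self-similar formula. A Cuntz--Pimsner covariant Toeplitz representation $(\psi,\pi)$ of $X_{G,\Lambda}$ restricts, on $A$ and on these families, to a unitary representation $U$ of $G$ together with a Cuntz--Krieger $\Lambda$-family $S$: the identity $\psi(x)^*\psi(y)=\pi(\langle x,y\rangle_A)$ gives $S_\lambda^*S_\lambda=p_{s(\lambda)}$; Cuntz--Pimsner covariance in each coordinate direction gives $p_v=\sum S_\lambda S_\lambda^*$, the sum over the colour-$i$ edges with range $v$ (with $\Lambda^0$ finite the left actions are by compact operators, so the covariance ideal is all of $A$); and the left-action formula forces $U_g S_\lambda = S_{g\cdot\lambda}U_{g|_\lambda}$. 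Conversely the canonical generators of $\O_{G,\Lambda}$ assemble into such a representation, and the two assignments are mutually inverse on generators, so the universal properties yield $\O_{G,\Lambda}\cong\O_{X_{G,\Lambda}}$. The only technical point is that $X_{G,\Lambda}$ is compactly aligned and that fibre multiplication is concatenation-with-restriction of paths --- this is where the factorization property of $\Lambda$ is used; neither amenability nor pseudo-freeness enters here.

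For (ii), assume $G$ amenable and the action pseudo free, so that $\G_{G,\Lambda}$ is Hausdorff, \'etale, and carries the degree cocycle $c\colon\G_{G,\Lambda}\to\bZ^k$. First I would produce the two surjections advertised in the introduction. For $\O_{G,\Lambda}\twoheadrightarrow\ca(\G_{G,\Lambda})$, I would exhibit explicit compact open bisections of $\G_{G,\Lambda}$ whose characteristic functions $U_g$ ($g\in G$) and $S_\lambda$ ($\lambda\in\Lambda$) lie in $\ca(\G_{G,\Lambda})$, or its multiplier algebra, and check by hand that they form a unitary representation of $G$ and a Cuntz--Krieger $\Lambda$-family satisfying the self-similar relations; since these elements generate, the universal property of $\O_{G,\Lambda}$ supplies the map. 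For $\N\O_{X_{G,\Lambda}}\twoheadrightarrow\O_{G,\Lambda}$, I would verify that the canonical generators of $\O_{G,\Lambda}$ give a Nica-covariant, Cuntz--Pimsner covariant representation of $X_{G,\Lambda}$ that moreover satisfies the Cuntz--Nica--Pimsner covariance relations, and then invoke the universal property of $\N\O_{X_{G,\Lambda}}$ to obtain the (clearly surjective) map onto $\O_{G,\Lambda}$.

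Finally, let $\Phi\colon\N\O_{X_{G,\Lambda}}\twoheadrightarrow\O_{G,\Lambda}\twoheadrightarrow\ca(\G_{G,\Lambda})$ be the composite. The cocycle $c$ endows $\ca(\G_{G,\Lambda})$ with a coaction of $\bZ^k$, and $\Phi$ intertwines it with the canonical gauge coaction on $\N\O_{X_{G,\Lambda}}$ since $S_\lambda$ is homogeneous of degree $d(\lambda)$ and each $U_g$ of degree $0$. By the gauge-invariant uniqueness theorem for Cuntz--Nica--Pimsner algebras of Carlsen--Larsen--Sims--Vittadello \cite{CLSV11}, $\Phi$ is injective once $X_{G,\Lambda}$ is $\td\phi$-injective (verified directly from the definition of $X_{G,\Lambda}$ and the properties of $\Lambda$) and $\Phi$ is injective on $A$. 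For the latter I would identify the image of $A$ in $\ca(\G_{G,\Lambda})$ explicitly and use that, $G$ being amenable, $\G_{G,\Lambda}$ is amenable, whence $\ca(\G_{G,\Lambda})=\ca_r(\G_{G,\Lambda})$ carries a faithful conditional expectation onto $C_0(\G^{(0)}_{G,\Lambda})$; composing with it exhibits a faithful representation of $A$ factoring through $\Phi|_A$. Since $\Phi$ is then injective and factors through both surjections, every map in the chain is an isomorphism, giving $\O_{G,\Lambda}\cong\O_{X_{G,\Lambda}}\cong\N\O_{X_{G,\Lambda}}\cong\ca(\G_{G,\Lambda})$. The step I expect to be the main obstacle is precisely this verification of the hypotheses of \cite{CLSV11}: beyond $\td\phi$-injectivity, one must understand the fixed-point algebra of the gauge coaction on $\ca(\G_{G,\Lambda})$ --- a direct limit assembled from the compact open subsets of the unit space --- and identify it with the faithful image of the $\N\O$-core of $\N\O_{X_{G,\Lambda}}$; pseudo-freeness of the action is exactly what keeps $\G_{G,\Lambda}$ Hausdorff and the cocycle $c$ well-defined, so it underpins all of part (ii).
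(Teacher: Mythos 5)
Your proposal follows the paper's strategy almost exactly. Part (i) is proved, as you outline, by exhibiting mutually inverse homomorphisms coming from the two universal properties: a CP-covariant representation of $X_{G,\Lambda}$ inside $\O_{G,\Lambda}$ in one direction, and in the other the observation that $S_\mu:=j_{cp}(\chi_\mu)$, $U_g:=j_{cp}(j(g))$ form a Cuntz--Krieger $\Lambda$-family and a unitary representation of $G$ satisfying the self-similar relation. Part (ii) is likewise proved by composing the surjections $\N\O_{X_{G,\Lambda}}\twoheadrightarrow\O_{G,\Lambda}\twoheadrightarrow\ca(\G_{G,\Lambda})$ and applying the gauge-invariant uniqueness theorem of Carlsen--Larsen--Sims--Vittadello to the composite, with the gauge action induced by the degree cocycle $c$. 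The one step in your outline that would not survive scrutiny as written is the injectivity of the composite on the coefficient algebra $A_{G,\Lambda}=C(\Lambda^0)\rtimes_\varphi G$: you invoke amenability of $\G_{G,\Lambda}$ to obtain $\ca(\G_{G,\Lambda})=\ca_r(\G_{G,\Lambda})$ and a faithful expectation onto $C_0(\G_{G,\Lambda}^{(0)})$, but in this development the amenability of $\G_{G,\Lambda}$ is itself \emph{deduced} from the isomorphism $\O_{G,\Lambda}\cong\ca(\G_{G,\Lambda})$ (via nuclearity of $\O_{X_{G,\Lambda}}$), so your argument is circular unless you prove groupoid amenability independently. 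The paper sidesteps this by using only the canonical expectation $E_2$ on the full C*-algebra of an ample groupoid (restriction of functions to the unit space, which requires no amenability) together with the faithful expectation $E_1:A_{G,\Lambda}\to C(\Lambda^0)$ furnished by amenability of the group $G$; one checks the intertwining relation $\pi\circ\psi|_{C(\Lambda^0)}\circ E_1=E_2\circ\pi\circ\psi|_{A_{G,\Lambda}}$ (pseudo-freeness ensures $E_2(U_g)=0$ for $g\neq 1_G$) and concludes injectivity on $A_{G,\Lambda}$ from faithfulness of $E_1$. With that substitution your outline coincides with the paper's proof.
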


\begin{theorem*}[Theorems \ref{T:AmeSim} and \ref{T:dicho}]
Let $G$ be an amenable group, and $\Lambda$ be a $k$-graph with $\Lambda^0$ finite.
Suppose that $G$ has a pseudo free self-similar action on $\Lambda$. Then
\begin{itemize}
\item[(i)]
$\mathcal{O}_{G,\Lambda}$ is nuclear and satisfies the UCT;
\item[(ii)] $\mathcal{O}_{G,\Lambda}$ is simple $\Leftrightarrow$ $\Lambda$ is $G$-cofinal and $G$-aperiodic.
\end{itemize}

 Suppose that $\mathcal{O}_{G,\Lambda}$ is simple. Then $\O_{G,\Lambda}$ is either stably finite or purely infinite.
 More precisely,
 \begin{itemize}
\item[(iii)]
$\mathcal{O}_{G,\Lambda}$ is stably finite $\Leftrightarrow$ $\Lambda$ has nonzero graph traces;

\item[(iv)]
$\mathcal{O}_{G,\Lambda}$ is purely infinite $\Leftrightarrow$ $\Lambda$ has no nonzero graph traces.
\end{itemize}
\end{theorem*}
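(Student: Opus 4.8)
The plan is to transfer every assertion about $\O_{G,\Lambda}$ to the self-similar path groupoid $\G_{G,\Lambda}$. By what we have already proved (Theorems \ref{T:OO} and \ref{T:ONC}, together with the amenability of $\G_{G,\Lambda}$), we have $\O_{G,\Lambda}\cong\ca(\G_{G,\Lambda})$ with $\G_{G,\Lambda}$ an amenable, \'etale, locally compact, Hausdorff, second-countable groupoid whose unit space is compact (here we use that $\Lambda^0$ is finite) and totally disconnected, hence ample. Amenability gives at once that $\ca(\G_{G,\Lambda})=\ca_r(\G_{G,\Lambda})$ is nuclear; and since the reduced C*-algebra of an amenable second-countable Hausdorff \'etale groupoid lies in the bootstrap class, $\O_{G,\Lambda}$ satisfies the UCT \cite{RS87}. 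This settles (i) and reduces (ii)--(iv) to dynamical properties of $\G_{G,\Lambda}$.

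For (ii), since $\G_{G,\Lambda}$ is amenable the simplicity criterion of Brown--Clark--Farthing--Sims \cite{BCFS14} applies: $\O_{G,\Lambda}$ is simple if and only if $\G_{G,\Lambda}$ is minimal and effective. So the task is to prove the two equivalences \emph{$\G_{G,\Lambda}$ minimal $\iff$ $\Lambda$ is $G$-cofinal} and \emph{$\G_{G,\Lambda}$ effective $\iff$ $\Lambda$ is $G$-aperiodic}. For the first I would describe the $\G_{G,\Lambda}$-orbit of an infinite path $x\in\Lambda^\infty$---it consists of the paths that agree with a shift of $x$ after acting by a suitable group element---and check that, on the basis of compact open cylinder sets $Z(\lambda)$, the density of every orbit unwinds precisely into $G$-cofinality. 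For the second I would identify the isotropy group at $x$ with an appropriate group of ``$G$-periods'' of $x$, compute the interior of the isotropy bundle, and verify that its triviality is exactly the $G$-aperiodicity of $\Lambda$ (effectiveness and topological principality coincide here by second countability). These are the $k$-graph analogues of the corresponding computations in \cite{EP17}, carried out in the presence of the $k$ commuting shift directions and the $G$-valued cocycle.

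For the dichotomy, assume $\O_{G,\Lambda}$ is simple, so $\G_{G,\Lambda}$ is minimal and effective. The bridge is a bijective correspondence between nonzero graph traces on $\Lambda$ and $\G_{G,\Lambda}$-invariant Borel probability measures on $\Lambda^\infty$ (finiteness of $\Lambda^0$ permits normalization); proving it amounts to showing that the measure naturally attached to a graph trace is invariant under both the shift and the $G$-action. If $\Lambda$ has a nonzero graph trace, the associated invariant measure has full support by minimality, and as $\G_{G,\Lambda}$ is topologically principal it induces a faithful tracial state on $\ca(\G_{G,\Lambda})$; a unital C*-algebra with a faithful tracial state is stably finite, and a simple purely infinite C*-algebra admits no tracial state, so $\O_{G,\Lambda}$ is stably finite and not purely infinite. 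If $\Lambda$ has no nonzero graph trace, then $\G_{G,\Lambda}$ carries no invariant probability measure, and I would invoke the groupoid dichotomy of \cite{RS17} (see also \cite{BL17}): a minimal, effective, ample, Hausdorff, second-countable groupoid with compact unit space and no invariant probability measure has purely infinite reduced C*-algebra, once the needed comparison property of its type semigroup is in hand. As exactly one of the two alternatives holds, this yields (iii), (iv), and the dichotomy; in the purely infinite case $\O_{G,\Lambda}$ is a Kirchberg algebra and is classified by its K-theory via \cite{Kir94, Phi00}.

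The real work, and the main obstacles, lie in two places. The first is the isotropy analysis in (ii): among the many ways a higher-rank path can be ``$G$-periodic'' one must single out exactly those configurations that contribute \emph{open} isotropy, and the interaction of the $k$ independent directions with the self-similar cocycle makes this bookkeeping delicate. The second is the no-graph-trace half of the dichotomy: to pass from the absence of invariant measures to pure infiniteness one must verify the comparison hypothesis---equivalently, produce paradoxical decompositions of the compact open subsets of $\Lambda^\infty$, or show that the type semigroup of $\G_{G,\Lambda}$ is almost unperforated---directly from the $k$-graph and group data. Everything else is an assembly of the groupoid machinery recalled above.
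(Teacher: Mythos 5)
Your overall architecture coincides with the paper's: transfer everything to $\ca(\G_{G,\Lambda})$ via Theorem \ref{T:ONC}, use \cite{BCFS14} for simplicity, and use the type-semigroup dichotomy of \cite{BL17, RS17} (Theorem \ref{T:BLRS}) for stable finiteness versus pure infiniteness. But two of the steps you treat as already available, or as deferred ``bookkeeping,'' are exactly where the substance lies, and as written the proposal does not close them. The first is in (i): you start from ``the amenability of $\G_{G,\Lambda}$'' as something already proved and deduce nuclearity from it. Nothing established earlier gives amenability of the groupoid, and amenability of $G$ does not hand it to you directly. The paper's logic runs in the opposite direction: nuclearity of $\O_{X_{G,\Lambda}}$ comes from the product-system realization (Theorem \ref{T:OO} together with \cite[Theorem~3.21]{AM15}, using that $A_{G,\Lambda}=C(\Lambda^0)\rtimes_\varphi G$ is nuclear because $G$ is amenable); the isomorphism of Theorem \ref{T:ONC} then makes $\ca(\G_{G,\Lambda})$ nuclear, and only afterwards does \cite{BO08} yield that $\G_{G,\Lambda}$ is amenable and $\ca(\G_{G,\Lambda})\cong\ca_r(\G_{G,\Lambda})$ (Proposition \ref{P:CCr} is a by-product, not an input). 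If you want to argue in your direction you must supply an independent proof of groupoid amenability, which you have not.

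The second gap is the purely infinite half of the dichotomy. You correctly identify that one must verify almost unperforation (in fact pure infiniteness) of $S(\G_{G,\Lambda})$, but you offer no argument and flag it as an open obstacle; this is the main missing idea rather than a detail. The paper's route is: finiteness of $\Lambda^0$ produces a cycle $\mu$ with $d(\mu)\ge(1,\dots,1)$ and hence an infinite path $x=\mu^\infty$ with base vertex $v$; the set $H=\{w:(G\cdot v)\Lambda w\ne\mt\}$ is a nonempty $G$-hereditary set, and $G$-cofinality gives $S(\G_{G,H\Lambda})\cong S(\G_{G,\Lambda})$ (Lemma \ref{S(g_G,H Lambda)iso to S(g_G,Lambda)}); the sub-$k$-graph $H\Lambda$ is $G$-strongly connected (again via the cycle and $G$-cofinality), and $G$-aperiodicity forces $\vert v_0\Lambda^p\vert>1$ for some $v_0\in H$, whence $2[\mathds{1}_{Z(\lambda)}]\le[\mathds{1}_{Z(\lambda)}]$ for every $\lambda$ by the comparison lemmas for cylinder sets. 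Without a reduction of this kind the appeal to Theorem \ref{T:BLRS} does not go through. The remaining items you defer --- minimality iff $G$-cofinality, topological principality iff $G$-aperiodicity, and the correspondence between nonzero graph traces and faithful tracial states (which the paper runs through the conditional expectation onto $C_0(\G_{G,\Lambda}^{(0)})$ rather than through invariant measures, an equivalent formulation) --- are sketched in the right way and do work out essentially as you describe.
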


The paper is structured as follows. In Section \ref{S:Pre}, some necessary backgrounds which will be used later are provided.
The notions of self-similar group actions on $k$-graphs and their associated C*-algebras are introduced in Section \ref{S:SS}.
Some basic properties are also given there. To a given self-similar action of a group $G$ on a $k$-graph $\Lambda$, in Section \ref{S:OC} we associate a product
system $X_{G,\Lambda}$ over $\bN^k$, whose Cuntz-Pimsner algebra is isomorphic to $\O_{G,\Lambda}$. This will be very useful when
we realize $\O_{G,\Lambda}$ as a groupoid C*-algebra in Section \ref{S:OG}. More precisely, in Section \ref{S:OG}, when a given self-similar
action $(G,\Lambda)$ is pseudo free, we construct a ``path-like groupoid" $\G_{G,\Lambda}$, which plays an indispensable role in studying $\O_{G,\Lambda}$.
The main result there is that $\O_{G,\Lambda}\cong\O_{X_{G,\Lambda}}\cong\mathcal{NO}_{X_{G,\Lambda}}\cong \ca(\G_{G,\Lambda})$ when
$G$ is amenable. Finally, in Section \ref{S:Pro}, with the aid of some recent important results on the C*-algebras of groupoids (see, e.g., \cite{BL17, BCFS14, RS17}),
the properties of $\O_{G,\Lambda}$ are studied in detail.

%


\subsection*{Notation and Conventions}
Let us end this section by fixing our notation and conventions.

Throughout this paper, $k$ is a fixed positive integer which is allowed to be $\infty$. For any semigroup $P$, let $P^k$ (resp.~$P^{(k)}$) denote
 the direct product (resp.~direct sum) of $k$ copies of $P$ (they coincide if $k < \infty$).
 Let $\mathbb{N}$ be the set of all nonnegative integers, and $\{e_i\}_{i=1}^{k}$ be the standard basis of $\mathbb{N}^k$.
 For $n,m \in \mathbb{N}^k$, we use $n \lor m$ (resp.~$n\wedge m$) to denote the coordinatewise maximum
(resp.~minimum) of $n$ and $m$. For $z \in \bT^k$ and $n=\sum_{i=1}^k n_i e_i$,
we use the multi-index notation $z^n:=\prod_{i=1}^{k}z_i^{n_i}$.

Finally, $G$ always stands for a discrete countable group, whose identity is written as $1_G$.

\section{Preliminaries}

\label{S:Pre}

In this section, we provide some necessary background which will be needed later.

\subsection{Groupoids}

In this subsection we recall the background of groupoid C*-algebras studied by Renault in \cite{Ren80}.

A \textit{groupoid} is a set $\Gamma$ such that there exist a subset $\Gamma^{(2)}$ of $\Gamma \times \Gamma$, a product map $\Gamma^{(2)} \to \Gamma, (\gamma_1, \gamma_2) \mapsto \gamma_1 \gamma_2$, and
an inverse map $\Gamma \to \Gamma, \gamma \mapsto \gamma^{-1}$ satisfying the following properties:
 $(\gamma^{-1})^{-1}=\gamma$ for all $\gamma \in \Gamma$;
$(\gamma,\gamma^{-1}),(\gamma^{-1},\gamma) \in \Gamma^{(2)}$ for all $\gamma \in \Gamma$;
if $(\gamma_1,\gamma_2), (\gamma_2,\gamma_3) \in \Gamma^{(2)}$, then $(\gamma_1 \gamma_2,\gamma_3),(\gamma_1,\gamma_2 \gamma_3) \in \Gamma^{(2)}$ and $(\gamma_1 \gamma_2) \gamma_3=\gamma_1(\gamma_2 \gamma_3)$;
 $(\gamma_1 \gamma_2)\gamma_2^{-1}=\gamma_1$ and $\gamma_1^{-1}(\gamma_1 \gamma_2)=\gamma_2$ for all $(\gamma_1,\gamma_2) \in \Gamma^{(2)}$.
Moreover, there are \emph{range} and \emph{source maps} $r,s :\Gamma \to \Gamma$ defined by $r(\gamma):=\gamma \gamma^{-1}$ and $s(\gamma):=\gamma^{-1} \gamma$, respectively.
Denote by $\Gamma^{(0)}:=s(\Gamma)=r(\Gamma)$ the \emph{unit space} of $\Gamma$. Furthermore, $\Gamma$ is called a \emph{topological groupoid} if $\Gamma$ is a topological space such that the product and inverse maps are both continuous.

Throughout this paper, \textsf{by locally compact groupoids, we mean second countable, locally compact, Hausdorff topological groupoids.}

Let $\Gamma$ be a locally compact groupoid.
$\Gamma$ is said to be \emph{\'{e}tale} if its range and source maps are both local homeomorphisms.
A subset $E \subseteq \Gamma$ is called a \emph{bisection} if the restrictions $r \vert_E$ and $s \vert_E$ are both homeomorphisms onto their images. An \'{e}tale groupoid is called \textit{ample} if it has a basis of compact open bisections.
Let $\Gamma$ be an \'{e}tale groupoid. Given $u \in \Gamma^{(0)}$, let $\Gamma_u:=s^{-1}(u)$ and $\Gamma^u:=r^{-1}(u)$. Then $\Gamma_u^u:=\Gamma_u \cap \Gamma^u$ is called the \emph{isotropy group} at $u$.
$\Gamma$ is said to be \emph{topologically principal} if the set of units whose isotropy groups are trivial is dense in $\Gamma^{(0)}$. A subset $U \subseteq \Gamma^{(0)}$ is said to be \emph{invariant} if for any $\gamma\in \Gamma$, $s(\gamma) \in U$ implies $r(\gamma) \in U$. $\Gamma$ is said to be \emph{minimal} if the only open invariant subsets of $\Gamma^{(0)}$ are $\mt$ and $\Gamma^{(0)}$.

Let $\Gamma$ be an \'{e}tale groupoid. For $f,g \in C_c(\Gamma)$, define
\[
\Vert f \Vert_I:=\sup_{u \in \Gamma^{(0)}}\Big\{\sum_{\gamma \in r^{-1}(u)}\vert f(\gamma) \vert, \sum_{\gamma \in s^{-1}(u)}\vert f(\gamma) \vert \Big\};
\]
\[
f\cdot g (\gamma):=\sum_{s(\beta)=r(\gamma)}f(\beta^{-1})g(\beta\gamma); \text{ and } f^*(\gamma):=\overline{f(\gamma^{-1})}.
\]
Then $\Vert\cdot\Vert_I$ is a norm, which is called the \emph{$I$-norm}, and $C_c(\Gamma)$ is a $*$-algebra. A $*$-representation $\pi$ of $C_c(\Gamma)$ on a Hilbert space is said to be \emph{bounded} if $\Vert \pi(f) \Vert \leq \Vert f \Vert_I$ for all $f \in C_c(\Gamma)$. Define $\Vert f \Vert:=\sup_\pi \Vert \pi(f) \Vert$, where $\pi$ runs through all bounded $*$-representations of $C_c(\Gamma)$. Then $\Vert\cdot\Vert$ is a C*-norm on $C_c(\Gamma)$, and the completion of $C_c(\Gamma)$ under the $\Vert\cdot\Vert$-norm is called the \emph{(full) groupoid C*-algebra}, denoted as $\ca(\Gamma)$.

\subsection{Type semigroups}

In this subsection, we briefly introduce type semigroups of ample groupoids from \cite{RS17}, which is a generalization of type semigroups arising from discrete group actions on compact Hausdorff spaces from \cite{RS12}
(also cf.~\cite{Wag93}).

Let $S$ be a unital abelian semigroup. One can associate $S$ a pre-order $\le$, which is called the \emph{algebraic ordering}, as follows: for $s_1,s_2 \in S$, $s_1 \leq s_2$ if there exists $t \in S$ such that $s_1+t=s_2$.
$S$ is said to be \emph{almost unperforated} if for $s_1,s_2 \in S$ and $n,m \in \mathbb{N}$, we have $ns_1 \leq ms_2, n >m$ imply $s_1 \leq s_2$;
and \emph{purely infinite} if $2s \leq s$ for all $s \in S$.

It is straightforward to see that purely infinite $\Rightarrow$ almost unperforated.

\begin{defn}
Let $\Gamma$ be an ample groupoid. Define an equivalence $\sim_\Gamma$ on $C_c(\Gamma^{(0)},\mathbb{N})$ as follows: for $f,g \in C_c(\Gamma^{(0)},\mathbb{N})$,
\[
f \sim_\Gamma g \quad\text{if}\quad  f=\sum_{i=1}^{n}\mathds{1}_{s(E_i)}\text{ and } g=\sum_{i=1}^{n}\mathds{1}_{r(E_i)}
\]
for some compact open bisections $E_1,\dots,E_n$ of $\Gamma$.
The \emph{type semigroup} $S(\Gamma)$ of $\Gamma$ is defined to be $S(\Gamma):=C_c(\Gamma^{(0)},\mathbb{N})/\sim_\Gamma$. The equivalence class of $f \in C_c(\Gamma^{(0)},\mathbb{N})$ in $S(\Gamma)$ is written as $[f]$.
\end{defn}

The following theorem was proved independently by B\"{o}nicke-Li in \cite{BL17} and by Rainone-Sims in \cite{RS17}.

\begin{thm}
\label{T:BLRS}
Let $\Gamma$ be a minimal, topologically principal, amenable, ample groupoid with the compact unit space. Then $\ca(\Gamma)$ is stably finite if and only if there exists a faithful tracial state on $\ca(\Gamma)$. Furthermore, suppose that $S(\Gamma)$ is almost unperforated. Then $\ca(\Gamma)$ is either stably finite or purely infinite.
\end{thm}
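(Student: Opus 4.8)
The plan is to prove the two assertions separately, routing everything through three structural facts that follow from the standing hypotheses: $\Gamma$ minimal and topologically principal forces $\ca(\Gamma)$ to be simple; $\Gamma$ amenable forces $\ca(\Gamma)=\ca_r(\Gamma)$ to be nuclear, hence exact; and $\Gamma^{(0)}$ compact forces $\ca(\Gamma)$ to be unital. So throughout we may treat $\ca(\Gamma)$ as a simple, separable, unital, exact C*-algebra.

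\textbf{The trace characterization.} The implication ``faithful tracial state $\Rightarrow$ stably finite'' is soft and uses none of the groupoid hypotheses: if $\tau$ is such a state then $\tau\otimes\mathrm{tr}_n$ is a faithful tracial state on $M_n(\ca(\Gamma))$, and a unital C*-algebra carrying a faithful tracial state is finite, since an isometry $v$ satisfies $\tau(1-vv^*)=0$, whence $vv^*=1$. The converse is the substantive direction and I expect it to be the deepest external input: a unital stably finite C*-algebra admits a normalized $2$-quasitrace (Blackadar--Handelman), and on an exact C*-algebra every bounded $2$-quasitrace is a trace (Haagerup); thus $\ca(\Gamma)$ has a tracial state $\tau$, and $\tau$ is automatically faithful because $\{a:\tau(a^*a)=0\}$ is a proper closed two-sided ideal of the simple algebra $\ca(\Gamma)$, hence $\{0\}$.

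\textbf{The dichotomy.} Set $u:=[\mathds{1}_{\Gamma^{(0)}}]$, an order unit of $S(\Gamma)$ because every $f\in C_c(\Gamma^{(0)},\bN)$ is dominated by a multiple of $\mathds{1}_{\Gamma^{(0)}}$. \emph{Case 1: $S(\Gamma)$ carries a state $\mu$ with $\mu(u)=1$.} Then $\mu$ restricts to a finitely additive $\Gamma$-invariant probability content on the compact open subsets of $\Gamma^{(0)}$ (invariance because $[\mathds{1}_{s(E)}]=[\mathds{1}_{r(E)}]$ for every compact open bisection $E$), which extends to a $\Gamma$-invariant Radon probability measure $m$ on $\Gamma^{(0)}$; integrating the restriction map $C_c(\Gamma)\to C_c(\Gamma^{(0)})$ against $m$ gives a $\|\cdot\|_I$-bounded tracial state on $C_c(\Gamma)$, which extends to a tracial state on $\ca(\Gamma)$ (amenability of $\Gamma$ is used here to pass between the full and reduced pictures). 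By simplicity this state is faithful, so by the trace characterization $\ca(\Gamma)$ is stably finite. \emph{Case 2: no such state exists} (equivalently, $\Gamma^{(0)}$ is $\Gamma$-paradoxical). By the standard cone/Hahn--Banach criterion for the existence of an order-unit state this forces $(n+1)u\le nu$ in $S(\Gamma)$ for some $n$, and hence $Pu\le nu$ for all $P\ge 1$. Fix any nonempty compact open $V\subseteq\Gamma^{(0)}$: minimality (the $\Gamma$-saturation of $V$ is open, invariant and nonempty, hence all of $\Gamma^{(0)}$) together with compactness and ampleness produces compact open bisections $E_1,\dots,E_N$ with $s(E_i)\subseteq V$ and $\bigcup_i r(E_i)=\Gamma^{(0)}$, so $u\le\sum_i[\mathds{1}_{r(E_i)}]=\sum_i[\mathds{1}_{s(E_i)}]\le N[\mathds{1}_V]$. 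Combining, $(nN+1)u\le nu\le nN[\mathds{1}_V]$, and since $nN+1>nN$ almost unperforation of $S(\Gamma)$ yields $u\le[\mathds{1}_V]$; with the trivial $[\mathds{1}_V]\le u$ we get that $[\mathds{1}_V]$ and $u$ are equivalent in the algebraic ordering for \emph{every} such $V$. Splitting $\Gamma^{(0)}$ into two nonempty compact opens (if $\Gamma^{(0)}$ were a point it would carry an obvious invariant probability measure, excluding Case 2) shows $u\le 2u\le u$, so $u$, and hence every $[\mathds{1}_V]$, is properly infinite. Finally, using the standard fact that in an ample groupoid C*-algebra every nonzero positive element Cuntz-dominates $\mathds{1}_V$ for some nonempty compact open $V$, and pushing $u\le[\mathds{1}_V]$ forward through the order-preserving semigroup morphism $S(\Gamma)\to\mathrm{Cu}(\ca(\Gamma))$, we find that the unit Cuntz-dominates every nonzero positive element of $\ca(\Gamma)$; for the simple unital algebra $\ca(\Gamma)$ this is exactly pure infiniteness (Kirchberg--R{\o}rdam). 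The two cases are disjoint (a state would give $2=\mu(2u)\le\mu(u)=1$) and exhaustive, which completes the dichotomy.

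\textbf{Main obstacle.} The crux is Case 2. Almost unperforation says nothing about inequalities between \emph{proportional} elements, so the real work is to manufacture a genuinely non-proportional comparison — here $(nN+1)u\le nN[\mathds{1}_V]$, obtained by marrying the paradoxicality relation $Pu\le nu$ with the minimality bound $u\le N[\mathds{1}_V]$ — on which almost unperforation can actually act, and then to transport the resulting comparison from $S(\Gamma)$ into $\mathrm{Cu}(\ca(\Gamma))$ via compact open bisections. The other slightly delicate point, in Case 1, is verifying that the invariant-measure trace on $C_c(\Gamma)$ genuinely extends to a state on $\ca(\Gamma)$, which is where amenability of $\Gamma$ is needed.
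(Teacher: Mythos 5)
This theorem is not proved in the paper at all: it is quoted verbatim as an external input, with the remark that it ``was proved independently by B\"onicke--Li in \cite{BL17} and by Rainone--Sims in \cite{RS17}.'' So there is no in-paper proof to compare against; what you have done is reconstruct, from scratch, essentially the argument of those two references, and your reconstruction is correct. The trace characterization via Blackadar--Handelman plus Haagerup (using nuclearity, hence exactness, from amenability, and simplicity from minimality plus topological principality to upgrade the trace to a faithful one) is exactly the standard route. The dichotomy via the order unit $u=[\mathds{1}_{\Gamma^{(0)}}]$ of $S(\Gamma)$ --- a state gives an invariant probability measure and hence a faithful trace; no state gives $(n+1)u\le nu$ by the Tarski/Goodearl--Handelman criterion, which you combine with the minimality estimate $u\le N[\mathds{1}_V]$ so that almost unperforation can bite and yield $u\le[\mathds{1}_V]$ for every nonempty compact open $V$ --- is also the argument of \cite{BL17,RS17}, and your identification of the ``non-proportional comparison'' $(nN+1)u\le nN[\mathds{1}_V]$ as the crux is exactly right. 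Two small points you leave implicit but should flag: the final lemma that every nonzero positive element of $\ca(\Gamma)$ Cuntz-dominates some $\mathds{1}_V$ is where topological principality (effectiveness) and the faithful conditional expectation onto $C(\Gamma^{(0)})$ are actually used, and is false without them; and the passage from $u\le[\mathds{1}_V]$ in $S(\Gamma)$ to $1\precsim\mathds{1}_V$ in $\ca(\Gamma)$ needs the (routine, but not free) observation that a decomposition witnessing $\sim_\Gamma$ can be realized by partial isometries $\mathds{1}_{E_i}\in C_c(\Gamma)$ in a matrix amplification. Neither is a gap in substance.
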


\subsection{$k$-Graph C*-algebras}

In this subsection, we recap the background of $k$-graph C*-algebras from \cite{KP00}.

A \textit{$k$-graph} is a countable small category such that there exists a functor $d:\Lambda \to \mathbb{N}^k$ satisfying the \emph{factorization property}: for $\mu\in\Lambda, n,m \in \mathbb{N}^k$ with $d(\mu)=n+m$, there exist unique $\alpha,\beta \in \Lambda$ such that $d(\alpha)=n, d(\beta)=m, s(\alpha)=r(\beta)$, and $\mu=\alpha\beta$. Let $(\Lambda_1,d_1)$ and $(\Lambda_2,d_2)$ be two $k$-graphs. A functor $f:\Lambda_1 \to \Lambda_2$ is called a \emph{graph morphism} if $d_2 \circ f=d_1$.

Let $\Lambda$ be a $k$-graph. For $\mu \in \Lambda$, let $\vert\mu\vert:=\sum_{i=1}^{k}d(\mu)_i$. For $n\in \bN^k$, let $\Lambda^n:=d^{-1}(n)$. For $A,B \subseteq \Lambda$, define $AB:=\{\mu\nu:\mu\in A,\nu\in B, s(\mu)=r(\nu)\}$. For $\mu,\nu \in \Lambda$, define $\Lambda^{\min}(\mu,\nu):=\{(\alpha,\beta) \in \Lambda \times \Lambda:\mu\alpha=\nu\beta,d(\mu\alpha)=d(\mu)\lor d(\nu)\}$.

Let $\Lambda$ be a $k$-graph. Then $\Lambda$ is said to be \emph{row-finite} (resp.~\textit{without sources}) if $\vert v\Lambda^{n}\vert<\infty$
(resp.~  $v\Lambda^{n} \neq \mt$ ) for all $v \in \Lambda^0$ and $n \in \mathbb{N}^k$.

Throughout this paper, \textsf{all $k$-graphs are assumed to be row-finite and without sources.}

\begin{eg}
Define $\Omega_k:=\{(p,q) \in \mathbb{N}^k \times \mathbb{N}^k:p \leq q\}$. For $(p,q), (q,m) \in \Omega_k$, define $(p,q) \cdot (q,m):=(p,m)$, $r(p,q):=(p,p)$, $s(p,q):=(q,q)$, and $d(p,q):=q-p$. Then $(\Omega_k,d)$ is a row-finite $k$-graph without sources.
\end{eg}

\begin{defn}
Let $\Lambda$ be a $k$-graph. Then a family of partial isometries $\{S_\mu\}_{\mu \in \Lambda}$ in a C*-algebra $B$ is called a \emph{Cuntz-Krieger $\Lambda$-family} if
\begin{enumerate}
\item\label{S_v ort proj} $\{S_v\}_{v \in \Lambda^0}$ is a family of mutually orthogonal projections;
\item\label{S_munu=S_muS_nu} $S_{\mu\nu}=S_{\mu} S_{\nu}$ if $s(\mu)=r(\nu)$;
\item\label{S_mu*S_mu=S_s(mu)} $S_{\mu}^* S_{\mu}=S_{s(\mu)}$ for all $\mu \in \Lambda$; and
\item\label{CK-condition for all path row finite} $S_v=\sum_{\mu \in v \Lambda^{n}}S_\mu S_\mu^*$ for all $v \in \Lambda^0, n \in \mathbb{N}^k$.
\end{enumerate}
The C*-algebra $\mathcal{O}_\Lambda$ generated by a universal Cuntz-Krieger $\Lambda$-family $\{s_\mu\}_{\mu \in \Lambda}$ is called the \emph{$k$-graph C*-algebra} of $\Lambda$.
\end{defn}

Let $\Lambda$ be a $k$-graph and let $\{S_\mu\}_{\mu \in \Lambda}$ be a Cuntz-Krieger $\Lambda$-family. A strongly continuous homomorphism $\alpha: \bT^k \to \Aut(\ca(S_\mu))$ is called a \emph{gauge action} if $\alpha_z(S_\mu)=z^{d(\mu)}S_\mu$ for all $z \in \bT^k$ and $\mu \in \Lambda$. By the universal property of $\mathcal{O}_\Lambda$, there exists a gauge action $\gamma$ for $\{s_\mu\}_{\mu \in \Lambda}$.

The following theorem is the gauge-invariant uniqueness theorem for $k$-graph $\ca$-algebras.

\begin{thm}
\label{gauge-inv uni thm k-graph}
Let $\Lambda$ be a $k$-graph and let $\{S_\mu\}_{\mu \in \Lambda}$ be a Cuntz-Krieger $\Lambda$-family which admits a gauge action. Then the homomorphism $h:\mathcal{O}_\Lambda \to \ca(S_\mu)$ induced from the universal property of $\mathcal{O}_\Lambda$ is an isomorphism if and only if $S_v \neq 0$ for all $v \in \Lambda^0$.
\end{thm}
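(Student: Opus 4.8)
The plan is to reduce this to the standard gauge-invariant uniqueness theorem as proved by Kumjian--Paskai in \cite{KP00}; indeed the statement is essentially quoted from there, so the "proof" I would give is really a sketch of the classical argument. First I would set up the gauge action $\gamma$ on $\mathcal{O}_\Lambda$ and observe that, since $h\circ\gamma_z = \alpha_z\circ h$ for all $z\in\bT^k$ by the defining property $\alpha_z(S_\mu)=z^{d(\mu)}S_\mu$ and the universal property, $h$ is $\bT^k$-equivariant. The "only if" direction is immediate: if $h$ is an isomorphism then $S_v=h(s_v)\ne 0$ because $s_v\ne 0$ in $\mathcal{O}_\Lambda$ (the universal Cuntz--Krieger family has all vertex projections nonzero, which one sees from the existence of a nontrivial representation, e.g.\ the infinite-path representation). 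So the content is the "if" direction.

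For the "if" direction, assume $S_v\ne 0$ for all $v\in\Lambda^0$; I must show $h$ is injective. The first step is to average over the gauge action: let $\Phi\colon\mathcal{O}_\Lambda\to\mathcal{O}_\Lambda^{\gamma}$ be the faithful conditional expectation onto the fixed-point algebra given by $\Phi(a)=\int_{\bT^k}\gamma_z(a)\,dz$, and similarly $\Psi$ onto $\ca(S_\mu)^{\alpha}$; equivariance of $h$ gives $h\circ\Phi=\Psi\circ h$. Since $\Phi$ is faithful, it suffices to show $h$ is injective on the fixed-point algebra $\mathcal{O}_\Lambda^{\gamma}$. The second step is to identify $\mathcal{O}_\Lambda^{\gamma}$ as an AF algebra: it is the closure of the increasing union of the finite-dimensional subalgebras $\mathcal{F}_n=\spn\{s_\mu s_\nu^* : d(\mu)=d(\nu)=n\}$ (more precisely one takes $\mathcal{M}_n = \spn\{s_\mu s_\nu^* : d(\mu)=d(\nu)\le n\}$, using the Cuntz--Krieger relation \eqref{CK-condition for all path row finite} to see these nest), and each $\mathcal{F}_n\cong\bigoplus_{v}\K(\ell^2(\Lambda^n v))$ with the obvious matrix units. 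The third step is to check that $h$ is injective on each such building block: a $*$-homomorphism out of a finite direct sum of compact-operator algebras is injective iff it is nonzero on each summand, and $h(s_v\cdot)$ being faithful on the $v$-summand follows because $h(s_\mu s_\mu^*)=S_\mu S_\mu^*$ dominates... actually one needs that $h$ kills no matrix unit, which reduces to $h(s_v)=S_v\ne 0$ together with the fact that the $s_\mu s_\mu^*$ for $\mu\in v\Lambda^n$ are mutually orthogonal nonzero subprojections of $s_v$ summing to $s_v$; nonzero-ness of each $S_\mu S_\mu^*$ follows from $S_\mu^*S_\mu=S_{s(\mu)}\ne 0$. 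Passing to the direct limit, $h$ is injective on $\mathcal{O}_\Lambda^{\gamma}$, and hence injective everywhere by the conditional-expectation argument.

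The step I expect to be the main obstacle — and the only one requiring genuine care rather than bookkeeping — is the structure theory of the fixed-point algebra: verifying that the spanning sets $\mathcal{M}_n$ really are finite-dimensional C*-subalgebras, that they are nested (which uses row-finiteness and the Cuntz--Krieger relation crucially, and is where "without sources" enters), that their closure is exactly $\mathcal{O}_\Lambda^{\gamma}$ (one direction is the spectral/Fejér argument showing any gauge-fixed element is a norm limit of elements of $\bigcup\mathcal{M}_n$), and the identification of each $\mathcal{M}_n/$block with a matrix algebra over $C_0$ of a discrete set. Since all of this is carried out in detail in \cite{KP00} for exactly the class of row-finite $k$-graphs without sources under consideration here, in the paper I would simply cite \cite[Theorem 3.4]{KP00} rather than reproduce the argument; I include the sketch above only to indicate why the hypothesis "$S_v\ne 0$ for all $v$" is precisely what is needed.
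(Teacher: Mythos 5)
The paper offers no proof of this statement: it is quoted verbatim as background from \cite{KP00} (it is Theorem 3.4 there), so "cite Kumjian--Pask" is exactly the paper's own approach, and your concluding remark that you would do the same is the right call. Your sketch of the classical argument --- equivariance, the faithful conditional expectation $\Phi$ onto the fixed-point algebra, the AF decomposition of $\mathcal{O}_\Lambda^\gamma$ into the nested finite-dimensional pieces $\mathcal{F}_n\cong\bigoplus_v \K(\ell^2(\Lambda^n v))$, and the reduction of injectivity to $S_v\neq 0$ via $S_\mu^*S_\mu=S_{s(\mu)}$ --- is the correct standard proof and contains no gaps.
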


Let $\Lambda$ be a $k$-graph. Then a graph morphism from $\Omega_k$ to $\Lambda$ is called an \emph{infinite path} of $\Lambda$. Let $\Lambda^\infty$
denote the set of all infinite paths of $\Lambda$.
For $\mu \in \Lambda$,  define the cylinder $Z(\mu):=\{\mu x: x \in \Lambda^\infty,s(\mu)=x(0,0)\}$. Endow $\Lambda^\infty$ with the topology generated by the basic open sets $\{Z(\mu):\mu \in \Lambda\}$. Each $Z(\mu)$ is compact, so $\Lambda^\infty$ is second countable, locally compact, Hausdorff, and totally disconnected. $\Lambda^\infty$ is compact if and only if $\Lambda^0$ is finite. Finally define the \emph{path groupoid} of $\Lambda$ by
\begin{align*}
\mathcal{G}_\Lambda:=\{(\mu x,d(\mu)-d(\nu),\nu x) \in \Lambda^\infty \times \mathbb{Z}^k \times \Lambda^\infty: s(\mu)=s(\nu),x \in s(\mu)\Lambda^\infty\}.
\end{align*}
For $\mu,\nu \in \Lambda$ with $s(\mu)=s(\nu)$, let $Z(\mu,\nu):=\{(\mu x,d(\mu)-d(\nu),\nu x):x \in s(\mu)\Lambda^\infty\}$. Endow $\mathcal{G}_\Lambda$ with the topology generated by the basic open sets $Z(\mu,\nu)$.
Then $\mathcal{G}_\Lambda$ is an ample groupoid and each $Z(\mu,\nu)$ is a compact open bisection.

Hjelmborg in \cite{Hje01} introduced the notion of graph traces for directed graphs (see also \cite{Tom03}). Pask-Rennie-Sims generalized this notion to $k$-graphs
in \cite{PRS08} as follows. Let $\Lambda$ be a $k$-graph. A function $gt:\Lambda^0 \to [0,\infty)$
is called a \emph{graph trace} if $gt(v)=\sum_{\mu \in v \Lambda^p}gt(s(\mu))$ for all $v \in \Lambda^0,p \in \mathbb{N}^k$. The graph trace $gt$ is said to be \emph{faithful} if $gt(v) \neq 0$ for all $v \in \Lambda^0$.


\subsection{Product systems over $\mathbb{N}^k$}
In this subsection we recap some background on product systems over $\mathbb{N}^k$ from \cite{CLSV11, Fow02, SY10}.

Let $A$ be a C*-algebra. A \emph{C*-correspondence} over $A$ (see \cite{Fow02, FMR03}) is a right Hilbert $A$-module $X$ together with a $*$-homomorphism $\phi: A\to \L(X)$, which gives a left
action of $A$ on $X$ by $a\cdot x:=\phi(a)(x)$ for all $a\in A$ and $x\in X$. $X$ is said to be \textit{essential} if $\ol\spn\{\phi(a) (x): a\in A, x\in X\}=X$.

Let $A$ be a C*-algebra. A \emph{product system} over $\mathbb{N}^k$ with coefficient $A$ is a semigroup $X=\bigsqcup_{n \in \mathbb{N}^k}X_n$, where each
 $X_n$ is an essential C*-correspondences over $A$ with $\phi_n(A)\subseteq \K(X_n)$, such that the following properties hold: $X_0=A$;
the multiplication $X_0 \cdot X_n$ (resp.~$X_n \cdot X_0$) is implemented by the left (resp.~right) action of $A$ on $X_n$ for all $n \in \mathbb{N}^k$;
$X_n \cdot X_m \subseteq X_{n+m}$ for all $n$, $m \in \mathbb{N}^k$;
there is an isomorphism $\Phi_{n,m}$ from $X_n \otimes_A X_m$ onto $X_{n+m}$ for $n,m \in \mathbb{N}^k$,
where $X_n \otimes_A X_m$ denotes the balanced tensor product, by sending $x\otimes y$ to $xy$ for all $x \in X_n$ and $y \in X_m$.

Let $A, B$ be C*-algebras, let $X$ be a product system over $\mathbb{N}^k$ with coefficient $A$, and let $\psi:X \to B$ be a map.
For $n \in \mathbb{N}^k$, denote by $\psi_n:=\psi \vert_{X_n}$. Then $\psi$ is called a \emph{representation} of $X$ if $\psi_n$ is a linear map for all $n \in \mathbb{N}^k; \psi_0$ is a homomorphism; $\psi_n(a\cdot x)=\psi_0(a)\psi_n(x)$ for all $n \in \mathbb{N}^k, a \in A,x \in X_n;\psi_n(x)^*\psi_n(y)=\psi_0(\langle x,y\rangle_A)$ for all $n \in \mathbb{N}^k,x,y \in X_n$; and $\psi_n(x)\psi_m(y)=\psi_{n+m}(xy)$ for all $n, m \in \mathbb{N}^k, x \in X_n, y \in X_m$. In addition, if $\psi$ is a representation, then for $n \in \mathbb{N}^k$ there exists a unique homomorphism $\psi_n^{(1)}:\mathcal{K}(X_n) \to B$ such that $\psi_n^{(1)}(\Theta_{x,y})=\psi_n(x)\psi_n(y)^*$ for all $x,y \in X_n$.

\begin{notation}
Let $A$ be a C*-algebra, and $X$ be a product system over $\mathbb{N}^k$ with coefficient $A$. For $a \in A$, denote by $L_a \in \mathcal{K}(X_0)$ such that $L_a(b):=ab$ for all $b \in A$. For $p,q \in \mathbb{N}^k$, when $p \neq 0$, define a homomorphism $\iota_{p}^{q}:\mathcal{L}(X_p) \to \mathcal{L}(X_q)$ by
\begin{align*}
\iota_{p}^{q}(T):= \begin{cases}
    \Phi_{p,q-p} \circ (T \otimes \id ) \circ \Phi_{p,q-p}^{-1}  &\text{ if $p \leq q$} \\
    0 &\text{ otherwise }.
\end{cases}
\end{align*}
When $p=0$, define a homomorphism $\iota_{p}^{q}:\mathcal{K}(X_p) \to \mathcal{L}(X_q)$ by $\iota_{p}^{q}(L_a):=\phi_q(a)$.

For $p \in \mathbb{N}^k$, define a closed two-sided ideal of $A$ by
\begin{align*}
I_p:= \begin{cases}
    \bigcap_{0 \neq q \leq p}\ker(\phi_q)  &\text{ if $p \neq 0$} \\
    A &\text{ otherwise }.
\end{cases}
\end{align*}
We then obtain a C*-correspondence over $A$ by $\widetilde{X_p}:=\bigoplus_{q \leq p} X_q \cdot I_{p-q}$.
For $p,q \in \mathbb{N}^k$, define the following homomorphisms.
\begin{align*}
\widetilde{\iota}_{p}^{q}:&\mathcal{L}(X_p) \to \mathcal{L}(\widetilde{X_q}), \quad \widetilde{\iota}_{p}^{q}(T):=\bigoplus_{r \leq q}\iota_{p}^{r}(T)\vert_{X_r \cdot I_{q-r}} \quad \text{if } p \neq 0, \\
 \widetilde{\iota}_{p}^{q}:&\mathcal{K}(X_p) \to \mathcal{L}(\widetilde{X_q}), \quad  \widetilde{\iota}_{p}^{q}(T):=\bigoplus_{r \leq q}\iota_{p}^{r}(T)\vert_{X_r \cdot I_{q-r}} \quad\text{if } p=0.
\end{align*}
\end{notation}

\begin{defn}\label{covariant rep}
Let $A, B$ be C*-algebras, $X$ be a product system over $\mathbb{N}^k$ with coefficient $A$, and $\psi:X \to B$ be a representation.

1. $\psi$ is said to be \emph{Cuntz-Pimsner (or CP) covariant} if
\[
\psi_0(a)=\psi_p^{(1)}(\phi_p(a))\qforal p \in \mathbb{N}^k, a \in A.
\]

2. $\psi$ is said to be \emph{Cuntz-Nica-Pimsner (or CNP) covariant} if
\begin{enumerate}
\item\label{Nica cov} $\psi_p^{(1)}(T)\psi_q^{(1)}(S)=\psi_{p \lor q}^{(1)}(\iota_{p}^{p \lor q}(T) \iota_{q}^{p \lor q}(S))$ for all $p,q \in \mathbb{N}^k, T\in \mathcal{K}(X_p)$, $S \in \mathcal{K}(X_q)$;
\item\label{new CP cov} for any finite set $\{T_{p_i}: p_i \in \mathbb{N}^k,T_{p_i} \in \mathcal{K}(X_{p_i}) \}_{i=1}^{n}$, if there exists $p \in \mathbb{N}^k$ satisfying $\sum_{i=1}^{n}\widetilde{\iota}_{p_i}^{q}(T_{p_i})=0$ for all $q \geq p$, we have $\sum_{i=1}^{n}\psi_{p_i}^{(1)}(T_{p_i})=0$.
\end{enumerate}
\end{defn}

The \textit{Cuntz-Pimsner algebra $\O_X$} (resp.~\textit{Cuntz-Nica-Pimsner algebra $\N\O_X$}) is the C*-algebra generated by a universal CP (resp.~CNP) covariant representation
$j_{cp}$ (resp.~$j_{cnp}$) of $X$ (\cite[Proposition~2.9]{Fow02} and {\cite[Proposition~3.12]{SY10}}).

%

\begin{prop}\label{Fowler CP implies CNP}
Let $X$ be a product system over $\mathbb{N}^k$ with coefficient $A$. If $\psi:X \to B$ is a CP covariant representation, then $\psi$ is also CNP covariant.
\end{prop}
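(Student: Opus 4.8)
The plan is to show that each of the two defining conditions for CNP covariance follows from CP covariance, condition \eqref{Nica cov} being essentially formal and condition \eqref{new CP cov} being where the real content lies. Throughout, write $\psi_p^{(1)}$ for the induced homomorphism on $\mathcal K(X_p)$ and recall that CP covariance says $\psi_0(a)=\psi_p^{(1)}(\phi_p(a))$ for every $p$ and every $a\in A$.

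First I would verify the Nica covariance identity \eqref{Nica cov}. For $T\in\mathcal K(X_p)$ and $S\in\mathcal K(X_q)$ it suffices to treat $T=\Theta_{x,x'}$ and $S=\Theta_{y,y'}$ with $x,x'\in X_p$, $y,y'\in X_q$, since finite-rank operators are dense and all maps involved are continuous homomorphisms. Then $\psi_p^{(1)}(T)\psi_q^{(1)}(S)=\psi_p(x)\psi_p(x')^*\psi_q(y)\psi_q(y')^*$. I would insert $\psi_0(a)=\psi_{p\vee q}^{(1)}(\phi_{p\vee q}(a))$ for the appropriate $a=\langle\cdot,\cdot\rangle_A$ terms, or more directly rewrite $\psi_p(x')^*\psi_q(y)$ using the representation relations together with the factorization $X_{p\vee q}\cong X_p\otimes_A X_{q}\,$-type identifications encoded by $\iota_p^{p\vee q}$ and $\iota_q^{p\vee q}$. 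Concretely, $\iota_p^{p\vee q}(\Theta_{x,x'})=\Theta_{x\otimes \cdot,\,x'\otimes\cdot}$ under $\Phi_{p,(p\vee q)-p}$, and multiplying these two operators in $\mathcal K(X_{p\vee q})$ and applying $\psi_{p\vee q}^{(1)}$ reproduces exactly $\psi_p(x)\psi_p(x')^*\psi_q(y)\psi_q(y')^*$ after using CP covariance to absorb the coefficient inner products. This is the standard Nica-covariance computation and should go through without surprises.

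The substantive part is condition \eqref{new CP cov}. Suppose $\{T_{p_i}\}_{i=1}^n$ with $T_{p_i}\in\mathcal K(X_{p_i})$ satisfies $\sum_{i=1}^n\widetilde\iota_{p_i}^{\,q}(T_{p_i})=0$ for all $q\ge p$. I want to conclude $\sum_{i=1}^n\psi_{p_i}^{(1)}(T_{p_i})=0$. Choose $q$ large enough, say $q\ge p\vee p_1\vee\cdots\vee p_n$. The idea is that on such a large $q$, the C*-correspondence $\widetilde{X_q}$ agrees with $X_q$ in the directions that matter — more precisely, the relevant ideals $I_{q-r}$ are all of $A$ once $q$ is large in each coordinate, because $\phi_n(A)\subseteq\mathcal K(X_n)$ and the $k$-graph-style structure forces $\ker\phi_n$ to be controlled; in any case, since $\phi_q$ is (eventually) injective in the essential setting, $\widetilde\iota_{p_i}^{\,q}$ restricted appropriately becomes $\iota_{p_i}^{\,q}$ on $X_q$. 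Then the hypothesis $\sum_i\iota_{p_i}^{\,q}(T_{p_i})=0$ in $\mathcal L(X_q)$, combined with CP covariance in the form $\psi_q^{(1)}\circ\iota_{p_i}^{\,q}=\psi_{p_i}^{(1)}$ (this compatibility is exactly what CP covariance encodes: pushing a compact on $X_{p_i}$ forward to $X_q$ and then applying $\psi_q^{(1)}$ gives back $\psi_{p_i}^{(1)}$ — it follows by checking on rank-ones and using $\psi_0(a)=\psi_{q-p_i}^{(1)}(\phi_{q-p_i}(a))$ to handle the tensor leg), yields $\sum_i\psi_{p_i}^{(1)}(T_{p_i})=\psi_q^{(1)}\bigl(\sum_i\iota_{p_i}^{\,q}(T_{p_i})\bigr)=\psi_q^{(1)}(0)=0$, as desired.

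The main obstacle I anticipate is the bookkeeping around $\widetilde{X_q}$ versus $X_q$ and the ideals $I_{q-r}$: one must argue carefully that, for $q$ sufficiently large, the vanishing of $\sum_i\widetilde\iota_{p_i}^{\,q}(T_{p_i})$ on the summand $X_q\cdot I_0=X_q$ already gives $\sum_i\iota_{p_i}^{\,q}(T_{p_i})=0$ in $\mathcal L(X_q)$ — this uses that $\widetilde{X_q}=\bigoplus_{r\le q}X_r\cdot I_{q-r}$ contains $X_q$ as the $r=q$ summand and that $\widetilde\iota_{p_i}^{\,q}$ acts on that summand as $\iota_{p_i}^{\,q}$. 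Once that reduction is in place, the key identity $\psi_q^{(1)}\circ\iota_{p_i}^{\,q}=\psi_{p_i}^{(1)}$ is the heart of the matter and is precisely the translation of CP covariance (Definition \ref{covariant rep}(1)) into a statement about the $\iota$-maps; I would prove it as a lemma on finite-rank operators and extend by continuity. With both conditions of Definition \ref{covariant rep}(2) established, $\psi$ is CNP covariant, completing the proof.
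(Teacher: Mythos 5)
Your proposal is correct and follows essentially the same route as the paper: Nica covariance is the standard rank-one computation (the paper simply cites Fowler), and for the second condition both arguments restrict the hypothesis to the $r=q$ summand $X_q\cdot I_0=X_q$ of $\widetilde{X_q}$ to get $\sum_i\iota_{p_i}^{q}(T_{p_i})=0$ in $\mathcal L(X_q)$, then apply the identity $\psi_q^{(1)}\circ\iota_{p_i}^{q}=\psi_{p_i}^{(1)}$, which is exactly the Pimsner-style consequence of CP covariance the paper invokes. Your aside suggesting that the ideals $I_{q-r}$ become all of $A$ for large $q$ is not right in general (they can vanish when the $\phi$'s are injective), but you correctly discard it in favor of the $I_0=A$ observation, so the argument stands.
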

\begin{proof}
First of all, the condition in Definition~\ref{covariant rep} 2.(i) follows from \cite[Proposition~5.4]{Fow02}.

To see that the condition in Definition~\ref{covariant rep} 2.(ii) is also satisfied, let us fix a finite set $\{T_{p_i}: p_i \in \mathbb{N}^k,T_{p_i} \in \mathcal{K}(X_{p_i}) \}_{i=1}^{n}$ such that there exists $p \in \mathbb{N}^k$ satisfying $\sum_{i=1}^{n}\widetilde{\iota}_{p_i}^{q}(T_{p_i})=0$ for all $q \geq p$. We may assume that $p_1=0,T_{p_1}=L_a$ for some $a \in A$, and $p_2,\dots,p_n \neq 0$. Then there exists $q \geq p \lor p_1 \lor \cdots \lor p_n$ such that $\sum_{i=1}^{n}\iota_{p_i}^{q}(T_{p_i})=0$ (as the $q$-th summand). So $\phi_q(a)+\sum_{i=2}^{n}\iota_{p_i}^{q}(T_{p_i})=0$. Notice that $\phi_q(a) \in \mathcal{K}(X_{q})$ (which is required in our definition).
By \cite[Proposition~4.7]{Lan95}, $\iota_{p_i}^{q}(T_{p_i}) \in \mathcal{K}(X_{q})$ for all $2 \leq i \leq n$. Since $\psi$ is CFP covariant,  similar to the proof of \cite[Lemma~3.10]{Pim97}, we get
\begin{align*}
\psi_q^{(1)}(\phi_q(a))+\sum_{i=2}^{n}\psi_q^{(1)}(\iota_{p_i}^{q}(T_{p_i}))=\sum_{i=1}^{n}\psi_{p_i}^{(1)}(T_{p_i})=0.
\end{align*}
Therefore $\psi$ is CNP covariant.
\end{proof}

Let $X$ be a product system over $\mathbb{N}^k$ with coefficient $A$, and let $\psi$ be a representation of $X$. A \emph{gauge action} is a strongly continuous homomorphism $\alpha:\bT^k \to \mathrm{Aut}(\ca(\psi(X)))$ such that $\alpha_z(\psi_n(x))=z^n \psi_n(x)$ for all $z \in \bT^k, n \in \mathbb{N}^k, x \in X_n$.


\begin{thm}[{\cite[Corollary~4.12]{CLSV11}}]\label{gauge-inv uni thm}
Let $X$ be a product system over $\mathbb{N}^k$ with coefficient $A$, and $\psi$ be a CNP covariant representation of $X$ which admits a gauge action.
Denote by $h:\mathcal{NO}_X \to \ca(\psi(X))$ the homomorphism induced from the universal property of $\mathcal{NO}_X$. Suppose that $h \vert_{j_{cnp,0}(A)}$ is injective. Then $h$ is an isomorphism.
\end{thm}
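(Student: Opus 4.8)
The plan is to run the standard gauge-invariant uniqueness argument: intertwine the two circle actions, pass to the fixed-point algebras via the canonical faithful conditional expectations, and reduce the whole statement to injectivity of $h$ on the fixed-point algebra $\N\O_X^\gamma$, which is then treated by filtering $\N\O_X^\gamma$ by finite $\vee$-closed sets of degrees, much as in the proof of the $k$-graph analogue, Theorem~\ref{gauge-inv uni thm k-graph}.

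First I would observe that $\N\O_X$ carries a universal gauge action $\gamma$: for each $z\in\bT^k$ the map $n\mapsto z^n j_{cnp,n}$ is again a CNP covariant representation (rescaling by a character preserves every defining relation of Definition~\ref{covariant rep}), hence induces an endomorphism of $\N\O_X$, and these assemble into a strongly continuous action $\gamma$ with $\gamma_z(j_{cnp,n}(x))=z^n j_{cnp,n}(x)$. By the universal property of $\N\O_X$ one gets $h\circ\gamma_z=\alpha_z\circ h$ for all $z\in\bT^k$. Averaging over Haar measure, $\Phi:=\int_{\bT^k}\gamma_z\,dz$ and $\Phi^\alpha:=\int_{\bT^k}\alpha_z\,dz$ are faithful conditional expectations onto $\N\O_X^\gamma$ and $\ca(\psi(X))^\alpha$ respectively, and $h\circ\Phi=\Phi^\alpha\circ h$. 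Consequently, if $h$ is injective on $\N\O_X^\gamma$ and $h(a)=0$, then $h(\Phi(a^*a))=\Phi^\alpha(h(a^*a))=0$ forces $\Phi(a^*a)=0$, hence $a=0$ by faithfulness of $\Phi$; surjectivity of $h$ is automatic since $\ca(\psi(X))$ is generated by $\psi(X)=h(j_{cnp}(X))$. So everything reduces to injectivity of $h$ on the core $\N\O_X^\gamma$.

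For the core, from the density of $\ol{\spn}\{j_{cnp,m}(x)\,j_{cnp,n}(y)^*:m,n\in\bN^k,\ x\in X_m,\ y\in X_n\}$ in $\N\O_X$ and the $\bZ^k$-grading one obtains $\N\O_X^\gamma=\ol{\bigcup_F B_F}$, where $F$ runs over the finite subsets of $\bN^k$ that contain $0$ and are closed under $\vee$, and $B_F:=\ol{\spn}\{j_{cnp,p}^{(1)}(T):p\in F,\ T\in\K(X_p)\}$; by the Nica covariance relation (Definition~\ref{covariant rep}~2.(i)) each $B_F$ is a C*-subalgebra. It therefore suffices to show $h|_{B_F}$ is injective for every such $F$, which I would prove by induction on $|F|$. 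The base case $F=\{0\}$ is precisely the hypothesis that $h|_{j_{cnp,0}(A)}$ is injective. For the inductive step, let $p=\max F$ (which exists and is unique because $F$ is finite and $\vee$-closed). The Nica relation makes $j_{cnp,p}^{(1)}(\K(X_p))$ a closed two-sided ideal of $B_F$, its quotient $B_F/j_{cnp,p}^{(1)}(\K(X_p))$ is generated by the images of the lower-degree pieces $j_{cnp,q}^{(1)}(\K(X_q))$, $q\in F\setminus\{p\}$, and one checks---this is part of the core analysis---that the inductive hypothesis applies to that quotient. The role of the CNP covariance relation (Definition~\ref{covariant rep}~2.(ii)) is to pin down, on the ideal itself, exactly which $T\in\K(X_p)$ are killed by $h$: those whose assembled images $\widetilde{\iota}_p^q(T)$ vanish for all sufficiently large $q$; such $T$ act through the sub-correspondences $X_q\cdot I_{p-q}$ with $q<p$, hence are again governed by lower degrees, where faithfulness of $h$ on $A$ has already been propagated via the Katsura-type ideals $I_{p-q}$. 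Chasing an element of $\ker h\cap B_F$ through this extension shows it vanishes, and letting $F$ grow gives injectivity of $h$ on $\N\O_X^\gamma$.

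I expect the main obstacle to be exactly this last step: understanding the core $\N\O_X^\gamma$ finely enough to carry out the induction. In the $\bN$-case this is Katsura's analysis of the core of a Cuntz-Pimsner algebra, where the top-degree correspondence is simply $X_p$ and Katsura's ideal does the job; for $\bN^k$ one must instead work with the assembled modules $\widetilde{X_p}=\bigoplus_{q\le p}X_q\cdot I_{p-q}$ and keep careful track of how the ideals $I_{p-q}$, the maps $\iota_p^q$ and $\widetilde{\iota}_p^q$, and the $\vee$-semilattice of degrees interact, so that the CNP relation separates genuinely new relations from old ones. This is the delicate content of \cite{SY10, CLSV11}, which is why the cited Corollary~4.12 rests on the machinery developed there; by contrast the surrounding conditional-expectation argument is entirely routine and parallels the $k$-graph situation of Theorem~\ref{gauge-inv uni thm k-graph}. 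An alternative, equally valid route is to invoke the co-universal property of $\N\O_X$ established in \cite{CLSV11} directly.
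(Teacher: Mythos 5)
The paper does not prove this statement at all: it is quoted verbatim as \cite[Corollary~4.12]{CLSV11}, so there is no in-paper argument to compare yours against. Your reduction to the fixed-point algebra is correct and routine: the universal gauge action $\gamma$ exists, $h$ intertwines $\gamma$ with $\alpha$, the two Haar-averaged conditional expectations are faithful and compatible with $h$, and surjectivity is automatic, so everything does come down to injectivity of $h$ on $\mathcal{NO}_X^\gamma$. Your description of the core as $\overline{\bigcup_F B_F}$ over finite $\vee$-closed $F$ containing $0$, with $j_{cnp,p}^{(1)}(\mathcal{K}(X_p))$ an ideal of $B_F$ for $p=\max F$, is also right (Nica covariance gives both the subalgebra and the ideal property).

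The genuine gap is in the inductive step, in two places. First, a structural one: for $p=\max F=\bigvee F$, the set $F\setminus\{p\}$ need not be $\vee$-closed (take $F=\{0,e_1,e_2,e_1+e_2\}$), so "the inductive hypothesis applies to the quotient" is not literally available as stated; one has to reorganize the induction (e.g., over the number of maximal elements, or over a different filtration of the core). Second, and more seriously, the assertion that CNP covariance (ii) "pins down exactly which $T\in\mathcal{K}(X_p)$ are killed by $h$" on the top ideal is precisely the content of the theorem, not a step in its proof: showing that an element of $\ker h\cap B_F$ whose image in the quotient vanishes must already lie in the relations forced by (ii) requires the full analysis of the assembled modules $\widetilde{X_q}$ and the ideals $I_{p-q}$, and in \cite{CLSV11} Corollary~4.12 is in fact deduced from the co-universal property (their Theorem~4.1) rather than from a direct core induction. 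You flag this honestly and name the correct references, and you offer the co-universal route as an alternative, so as an account of a cited black-box result your write-up is acceptable; but as a self-contained proof it stops exactly where the difficulty begins.
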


\section{Self-Similar $k$-Graphs}

\label{S:SS}

In this section, we introduce self-similar group actions on $k$-graphs and their associated C*-algebras.
The work of this section was inspired by Exel-Pardo \cite{EP17}.

\begin{defn}
Let $\Lambda$ be a $k$-graph. A bijection $\varphi:\Lambda \to \Lambda$ is called an \emph{automorphism} of $\Lambda$ if
\begin{enumerate}
\item $\varphi(\Lambda^n) \subseteq \Lambda^n$ for all $n \in \mathbb{N}^k$;
\item $s \circ \varphi=\varphi \circ s$ and $r \circ \varphi=\varphi \circ r$.
\end{enumerate}
Denote by $\Aut(\Lambda)$ the group of all automorphisms of $\Lambda$.

Let $G$ be a (discrete countable) group. We say that \textit{$G$ acts on $\Lambda$} if there is a group homomorphism $\varphi$ from $G$ to $\Aut(\Lambda)$.
For $g\in G$ and $\mu\in\Lambda$, we often simply write $\varphi(g)(\mu)$ as $g\cdot \mu$.
\end{defn}

We should mention that the above notions are different from those in \cite{KP00}.


\begin{defn}\label{D:sskg}
Let $\Lambda$ be a $k$-graph, and $G$ be a group acting on $\Lambda$.
Then the action is said to be \emph{self-similar} if there exists a \emph{restriction map} $G\times \Lambda \to G$,
$(g,\mu)\mapsto g|_\mu$, such that
\begin{enumerate}
\item
$g\cdot (\mu\nu)=(g \cdot \mu)(g \vert_\mu \cdot \nu)$ for all $g \in G,\mu,\nu \in \Lambda$ with $s(\mu)=r(\nu)$.

\item
$g \vert_v =g$ for all $g \in G,v \in \Lambda^0$;

\item
$g \vert_{\mu\nu}=g \vert_\mu \vert_\nu$ for all $g \in G,\mu,\nu \in \Lambda$ with $s(\mu)=r(\nu)$;

\item
$1_G \vert_{\mu}=1_G$ for all $\mu \in \Lambda$;

\item
$(gh)\vert_\mu=g \vert_{h \cdot \mu} h \vert_\mu$ for all $g,h \in G,\mu \in \Lambda$.
\end{enumerate}
Moreover, $\Lambda$ and $G$ are called a \textit{self-similar $k$-graph} over $G$ and \textit{self-similar group on $\Lambda$}, respectively.
\end{defn}

\begin{defn}
Let $\Lambda$ be a self-similar $k$-graph over a group $G$, and $H$ be a subset of $\Lambda^0$.
Then $H$ is said to be \emph{$G$-hereditary} if $s(H \Lambda) \subseteq H$ and $G \cdot H \subseteq H$.
\end{defn}

Notice that $H$ is hereditary in the usual sense, and that it naturally induces a self-similar action of $G$ on $H \Lambda$.

\begin{defn}
Let $\Lambda$ be a self-similar $k$-graph over a group $G$.
Then $\Lambda$ is said to be \emph{$G$-strongly connected} if $(G \cdot v) \Lambda w \neq \mt$ for all $v,w \in \Lambda^0$.
\end{defn}

Obviously, if $\Lambda$ is strongly connected, then it is $G$-strongly connected.

\begin{lem}
Let $\Lambda$ be a self-similar $k$-graph over a group $G$.
Then
\begin{enumerate}
\item\label{g vert_mu cdot s(nu)} $g \vert_\mu \cdot s(\nu)=g \cdot s(\nu)$ for all $g \in G, \mu,\nu \in \Lambda$ with $s(\mu)=r(\nu)$;
\item $g|_\mu\cdot s(\mu)=g\cdot s(\mu)$ for all $\mu \in \Lambda$;
\item $g \vert_\mu^{-1}=g^{-1} \vert_{g \cdot \mu}$ for all $g \in G,\mu \in \Lambda$.
\end{enumerate}
\end{lem}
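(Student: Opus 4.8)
The plan is to derive all three identities directly from the five axioms in Definition~\ref{D:sskg}, using the one structural fact about $k$-graphs that we need, namely that every object $v \in \Lambda^0$ is a morphism with $d(v) = 0$ and $r(v) = s(v) = v$, and that $\mu v' = \mu$ whenever $v' = s(\mu)$ (units act as identities in the category $\Lambda$). Each part is a short computation, so I will organize them so that later parts can reuse earlier ones.

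\textbf{Part (i).} Here the key observation is that for $\mu, \nu \in \Lambda$ with $s(\mu) = r(\nu)$, the object $s(\nu) \in \Lambda^0$ is composable on the right with $\mu\nu$, so I would apply axiom (1) to the factorization $\mu\nu = \mu \cdot (\nu\, s(\nu))$, or more directly compute $g \cdot (\mu\nu) = g \cdot (\mu \nu s(\nu))$. Actually the cleanest route: apply axiom (1) to the product $\nu \cdot s(\nu) = \nu$ using the restriction $g|_\mu$ — that is, $g|_\mu \cdot (\nu\, s(\nu)) = (g|_\mu \cdot \nu)(g|_\mu|_\nu \cdot s(\nu))$. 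Since $\nu\, s(\nu) = \nu$ and $g|_\mu \cdot \nu$ has source $g|_\mu|_\nu \cdot s(\nu)$ forced to equal $s(g|_\mu \cdot \nu)$, comparing sources and using that $\varphi(g|_\mu)$ is a graph automorphism (so commutes with $s$) gives $g|_\mu \cdot s(\nu) = s(g|_\mu \cdot \nu) = g|_\mu \cdot s(\nu)$, which is circular. The honest argument is: $g \cdot (\mu\nu) = (g\cdot\mu)(g|_\mu \cdot \nu)$ has source $s(g|_\mu\cdot\nu) = g|_\mu \cdot s(\nu)$ (automorphism commutes with $s$), while on the other hand $\mu\nu = (\mu\nu)s(\nu)$ so $g\cdot(\mu\nu) = (g\cdot(\mu\nu))(g|_{\mu\nu} \cdot s(\nu))$ by axiom (1), giving source $g|_{\mu\nu}\cdot s(\nu)$; since also by axiom (3) $g|_{\mu\nu} = g|_\mu|_\nu$, and $g|_\mu|_\nu \cdot s(\nu) = g|_\mu\cdot s(\nu)$ would again need part (i). The truly correct derivation uses axiom (1) on $\mu\nu$ versus $\mu(\nu)$ directly: write $s(\mu) = r(\nu) =: w$; then $\mu = \mu w$, so $g\cdot\mu = (g\cdot\mu)(g|_\mu \cdot w)$ forces $g|_\mu \cdot w = s(g\cdot\mu) = g\cdot w = g \cdot r(\nu)$. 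Now I claim more generally $g|_\mu \cdot x = g\cdot x$ whenever $x$ is composable after $\mu$, by noting $g\cdot(\mu x) = (g\cdot\mu)(g|_\mu\cdot x)$ has source $g|_\mu \cdot s(x)$, and separately $\mu x = (\mu x)s(x)$ gives source $g|_{\mu x}\cdot s(x) = g|_\mu|_x \cdot s(x)$, hence $g|_\mu \cdot s(x) = g|_\mu|_x \cdot s(x)$; taking $x = \nu$ and using axiom (2) ($g|_\mu|_\nu|_{s(\nu)}$ collapses) finishes it. I will present whichever of these unwinds most cleanly after checking the details, settling on: apply axiom (1) to $g\cdot(\mu\nu)$ and read off that its source equals $g|_\mu \cdot s(\nu)$; apply axiom (1) instead to the equal element $g\cdot((\mu\nu)\,s(\nu))$ and read off its source as $g|_{\mu\nu}\cdot s(\nu) = g|_\mu|_\nu \cdot s(\nu)$ (axiom (3)); then invoke axiom (2) to see $g|_\mu|_\nu \cdot s(\nu) = g|_\mu \cdot s(\nu)$ is not yet what we want — rather, the point is $s(\nu) \in \Lambda^0$ so by axiom (2) restriction along it is trivial and $g|_\mu|_{s(\nu)} = g|_\mu$; combined with part-(ii)-type reasoning this gives $g\cdot s(\nu) = g|_\mu\cdot s(\nu)$ directly once I note $g|_\mu \cdot r(\nu) = g \cdot r(\nu)$ (the $\mu = \mu r(\nu)\nu$ trick) and $s(\nu), r(\nu)$ lie in the same... no. The clean statement: from $g \cdot \mu = g\cdot(\mu\, r(\nu)) $ — wait $r(\nu) = s(\mu)$ so $\mu\, s(\mu) = \mu$ — axiom (1) gives $g|_\mu \cdot s(\mu) = s(g\cdot\mu) = g\cdot s(\mu)$, which is exactly part (ii). Then for part (i), $s(\mu) = r(\nu)$ and $g|_\mu\cdot s(\mu) = g\cdot s(\mu)$; but I need $g|_\mu \cdot s(\nu)$, a different object. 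Use instead: $\mu\nu \in \Lambda$, apply (ii) with $\mu\nu$ in place of $\mu$: $g|_{\mu\nu}\cdot s(\mu\nu) = g\cdot s(\mu\nu)$, i.e.\ $g|_\mu|_\nu \cdot s(\nu) = g \cdot s(\nu)$ by axiom (3). Finally $g|_\mu|_\nu \cdot s(\nu) = g|_\mu|_\nu \cdot s(\nu)$ and separately applying (ii) to $g|_\mu$ and $\nu$ gives $g|_\mu|_\nu \cdot s(\nu) = g|_\mu \cdot s(\nu)$. Chaining: $g|_\mu \cdot s(\nu) = g|_\mu|_\nu \cdot s(\nu) = g\cdot s(\nu)$. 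So the logical order is (ii) first, then (i).

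\textbf{Part (ii).} As extracted above, this is immediate: $s(\mu) \in \Lambda^0$, so $\mu = \mu\, s(\mu)$ in the category; applying axiom (1) to $g \cdot (\mu\, s(\mu))$ yields $g\cdot\mu = (g\cdot\mu)(g|_\mu \cdot s(\mu))$, and uniqueness of factorization (equivalently, reading off sources: the source of $g\cdot\mu$ on the right-hand side is $g|_\mu\cdot s(\mu)$, while on the left-hand side $\varphi(g)$ commutes with $s$ so it is $g\cdot s(\mu)$) forces $g|_\mu \cdot s(\mu) = g\cdot s(\mu)$.

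\textbf{Part (iii).} I would combine axioms (5), (4), and (2). Apply axiom (5) with $h$ replaced by $g^{-1}$ and $\mu$ replaced by $g\cdot\mu$... more precisely: for any $g$ and $\mu$, axiom (5) gives $(g^{-1} g)|_\mu = g^{-1}|_{g\cdot\mu}\, g|_\mu$. The left side is $1_G|_\mu = 1_G$ by axiom (4). Hence $g^{-1}|_{g\cdot\mu}\, g|_\mu = 1_G$, which says $g|_\mu$ is invertible with inverse $g^{-1}|_{g\cdot\mu}$, i.e.\ $g|_\mu^{-1} = g^{-1}|_{g\cdot\mu}$. (One should also check the other order $g|_\mu\, g^{-1}|_{g\cdot\mu} = 1_G$; this follows by applying the same computation with the roles of $g$ and $g^{-1}$ swapped and $\mu$ replaced by $g\cdot\mu$, noting $g^{-1}\cdot(g\cdot\mu) = \mu$.)

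\textbf{Main obstacle.} There is no deep difficulty here — the only subtlety is the bookkeeping in part (i), making sure the chain of equalities $g|_\mu\cdot s(\nu) = g|_\mu|_\nu \cdot s(\nu) = g\cdot s(\nu)$ is justified at each link without circular reliance on (i) itself. The safe route, which I will follow, is to prove (ii) first from axiom (1) applied to $\mu = \mu\, s(\mu)$, then obtain (i) by applying (ii) twice — once to the pair $(g, \mu\nu)$ using axiom (3), and once to the pair $(g|_\mu, \nu)$ — and finally prove (iii) independently from axioms (4) and (5). Everything else is a direct substitution into the axioms.
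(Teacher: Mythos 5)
Your final argument is correct, and it uses the same essential mechanism as the paper — comparing the source of $g\cdot(\mu\nu)$ computed two ways, using that each $\varphi(h)$ commutes with $s$ and that the source of a composite is the source of its last factor — but with the logical dependency between (i) and (ii) reversed. The paper proves (i) directly in one line, $g\cdot s(\nu)=g\cdot s(\mu\nu)=s(g\cdot(\mu\nu))=s\bigl((g\cdot\mu)(g|_\mu\cdot\nu)\bigr)=s(g|_\mu\cdot\nu)=g|_\mu\cdot s(\nu)$, and then obtains (ii) as the special case $\nu=s(\mu)$. You instead prove (ii) first from axiom (1) applied to $\mu=\mu\,s(\mu)$, and then assemble (i) from two applications of (ii) (to the pairs $(g,\mu\nu)$ and $(g|_\mu,\nu)$) together with axiom (3); this is valid and only marginally longer. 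Your part (iii) is identical to the paper's. The one real criticism is presentational: the write-up of part (i) records several abandoned, explicitly circular attempts before arriving at the working argument. All of that should be excised — the final proof is just the three short computations you settle on at the end, in the order (ii), (i), (iii).
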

\begin{proof}
(i) This can be seen from the following:
\[
g\cdot s(\nu)=g\cdot s(\mu\nu)=s(g\cdot (\mu\nu))=s(g|_\mu \cdot \nu)=g|_\mu\cdot s(\nu).
\]

(ii) This is a special case of (i).

(iii) This follows from
$
(g|_\mu)(g^{-1}|_{g\cdot \mu})=(gg^{-1})|_{g\cdot\mu}=1_G=(g^{-1}g)|_{\mu}=(g^{-1}|_{g\cdot \mu})(g|_\mu ).
$
\end{proof}

Let $\Lambda$ be a $k$-graph. Put $\Lambda^\Be:=\bigcup_{i=1}^{k}\Lambda^{e_i}$. Then the following lemma shows how one can extend an action of $G$
on $\Lambda^0 \cup \Lambda^\Be$ with a restriction to a self-similar action of $G$ on $\Lambda$.

\begin{lem}\label{extend self similar}
Let $\Lambda$ be a $k$-graph, and $G$ be a group. Suppose that $G$ acts on the set $\Lambda^0 \cup \Lambda^\Be$,
and that there is a restriction map $G \times \big(\Lambda^0 \cup \Lambda^\Be \big) \to G, (g,x)\mapsto g|_x$, satisfying the following properties:
\begin{enumerate}
\item $G \cdot \Lambda^n \subseteq \Lambda^n$ for all $n \in \{0,e_i:1 \leq i \leq k\}$;
\item $s(g \cdot \mu)=g \cdot s(\mu)$ and $r(g \cdot \mu)=g \cdot r(\mu)$ for all $g \in G, \mu \in\Lambda^\Be$;
\item $g \vert_\mu \cdot s(\nu)=g \cdot s(\nu)$ for all $g \in G,\mu \in \Lambda^\Be,\nu \in \Lambda$ with $s(\mu)=r(\nu)$;
\item $(g \cdot \mu)( g \vert_\mu \cdot \nu)=(g \cdot \alpha)(g \vert_\alpha \cdot \beta)$ for all $g \in G, \mu,\nu,\alpha,\beta \in \Lambda^\Be$ with $\mu\nu=\alpha\beta$;
\item $g \vert_v =g$ for all $g \in G,v \in \Lambda^0$;
\item
$g \vert_\mu \vert_\nu=g \vert_\alpha \vert_\beta$ for all $g \in G,\mu,\nu,\alpha,\beta \in\Lambda^\Be$ with $\mu\nu=\alpha\beta$;
\item $(gh)\vert_\mu=(g \vert_{h \cdot \mu})( h \vert_\mu)$ for all $g,h \in G,\mu \in \Lambda^\Be$.
\end{enumerate}
Then there exists a unique self-similar action of $G$ on $\Lambda$ with the restriction map $\vert:G \times \Lambda \to G$ extending the given action and the given map $\vert$.
\end{lem}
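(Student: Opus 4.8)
The plan is to use the factorization property of $\Lambda$ to define the action and restriction map on all of $\Lambda$ by ``spelling out'' a path as a product of edges, and then to check that the resulting maps are well-defined and satisfy the five axioms in Definition~\ref{D:sskg}. First I would record that every $\mu\in\Lambda$ with $d(\mu)=n$ can be written as $\mu=\mu_1\mu_2\cdots\mu_{|n|}$ with each $\mu_j\in\Lambda^\Be$, and that any two such factorizations are related by a finite sequence of ``commuting square'' moves, i.e.\ replacing an adjacent pair $\mu_j\mu_{j+1}$ with the unique $\alpha\beta$ having $\mu_j\mu_{j+1}=\alpha\beta$ and $d(\alpha)\neq d(\mu_j)$ (this is the standard consequence of the factorization property; cf.\ the blue/unique-factorization lemma for $k$-graphs). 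Given such a spelling, I would \emph{define}
\[
g\cdot\mu:=(g\cdot\mu_1)\big(g|_{\mu_1}\cdot\mu_2\big)\big(g|_{\mu_1}|_{\mu_2}\cdot\mu_3\big)\cdots,
\qquad
g|_\mu:=g|_{\mu_1}|_{\mu_2}\cdots|_{\mu_{|n|}},
\]
and for $v\in\Lambda^0$ set $g\cdot v$ and $g|_v$ by the given data. On $\Lambda^0\cup\Lambda^\Be$ these clearly agree with the given action and restriction map, so uniqueness will follow once well-definedness and axiom~(i) are established, since any self-similar extension must satisfy (i) and hence is forced on all of $\Lambda$ by induction on $|d(\mu)|$.

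\textbf{Key steps, in order.} (1) Well-definedness: show the two displayed formulas are independent of the chosen factorization of $\mu$ into edges. By the commuting-square description of factorizations, it suffices to check invariance under a single adjacent swap $\mu_j\mu_{j+1}\rightsquigarrow\alpha\beta$; this is exactly hypotheses~(iv) and~(vi) of the Lemma (applied with $g$ replaced by $g|_{\mu_1}\cdots|_{\mu_{j-1}}$), combined with hypothesis~(iii) to see that the restriction maps feeding the later edges are unchanged. (2) Axiom~(i), $g\cdot(\mu\nu)=(g\cdot\mu)(g|_\mu\cdot\nu)$: spell out $\mu$ and $\nu$ as edge words, concatenate, and read off both sides directly from the definition; the point is that after processing the letters of $\mu$ the ``accumulated'' restriction is precisely $g|_\mu$, so the letters of $\nu$ are acted on by $g|_\mu$ as required — a telescoping bookkeeping argument. (3) Axiom~(ii), $g|_v=g$: immediate from the definition and hypothesis~(v). (4) Axiom~(iii), $g|_{\mu\nu}=g|_\mu|_\nu$: again immediate from concatenating edge words in the definition of $g|_{(\cdot)}$. (5) Axiom~(iv), $1_G|_\mu=1_G$: induct on $|d(\mu)|$ using hypothesis~(v) for edges (note $1_G|_v=1_G$) and the product formula for $g|_{(\cdot)}$. (6) Axiom~(v), $(gh)|_\mu=g|_{h\cdot\mu}\,h|_\mu$: induct on $|d(\mu)|$; for $\mu=\mu_1\mu'$ write $(gh)|_\mu=(gh)|_{\mu_1}|_{\mu'}$, apply hypothesis~(vii) at the first edge to get $(gh)|_{\mu_1}=g|_{h\cdot\mu_1}\,h|_{\mu_1}$, then apply the inductive hypothesis to $\mu'$, being careful that $h\cdot\mu=(h\cdot\mu_1)(h|_{\mu_1}\cdot\mu')$ so that the first edge of $h\cdot\mu$ is $h\cdot\mu_1$ — which is what axiom~(i), already proved in step~(2), guarantees. (7) Finally, verify that $g\mapsto(\mu\mapsto g\cdot\mu)$ lands in $\Aut(\Lambda)$ and is a homomorphism: degree-preservation and compatibility with $s,r$ follow from hypotheses~(i),(ii) extended along edge words, bijectivity follows since $g^{-1}$ furnishes a two-sided inverse (using the already-proven axioms, as in the Lemma preceding this one), and the homomorphism property $(gh)\cdot\mu=g\cdot(h\cdot\mu)$ is proved by the same induction as step~(6), using hypothesis~(iv)/(vii)-type data at each edge together with axiom~(i).

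\textbf{Main obstacle.} The only genuinely delicate point is step~(1), well-definedness under change of edge factorization. One must know that any two factorizations of $\mu$ into $\Lambda^\Be$-letters are connected through a chain of adjacent transpositions of ``bidegree type'' (the $k$-graph analogue of the fact that reduced words differ by commuting-generator swaps), and then check that a single such transposition leaves both $g\cdot\mu$ and $g|_\mu$ unchanged; the restriction-map bookkeeping here is where hypotheses~(iii), (iv) and~(vi) all get used simultaneously, and it is easy to mis-track which group element is acting on which edge. Everything after step~(1) is a routine induction on path length. I would therefore state the combinatorial ``connectedness of factorizations'' fact as a preliminary observation (with a one-line pointer to the factorization property) and spend the bulk of the proof on the single-transposition invariance check.
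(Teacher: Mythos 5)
Your proposal is correct and follows essentially the same route as the paper's (much terser) proof: define the extension on edge factorizations, use hypotheses (iv) and (vi) to get invariance under a change of factorization, and verify the five axioms of Definition~\ref{D:sskg} by induction on $|d(\mu)|$. The only difference is that you make explicit the combinatorial fact that any two edge factorizations are linked by adjacent swaps, which the paper leaves implicit in its ``it is not hard to see'' step.
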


\begin{proof}
For $\mu \in \Lambda$ with $\vert\mu\vert=2$, write $\mu=\mu_1 \mu_2$ with $\mu_1,\mu_2 \in \Lambda^\Be$.
For $g\in G$, put $g \cdot \mu:=(g \cdot \mu_1)(g \vert_{\mu_1} \cdot \mu_2)$ and $g|_\mu:=g|_{\mu_1}|_{\mu_2}$.
It is not hard to see that both $g \cdot \mu$ and $g|_\mu$ are well-defined. Inductively, we extend the given action and restriction to $\Lambda$.
It is easy to check that they satisfy Definition~\ref{D:sskg} (i)-(v) are satisfied. Therefore the extensions yield a self-similar action of $G$ on $\Lambda$.
\end{proof}

Let $\Lambda$ be a self-similar $k$-graph over a group $G$. For $g \in G$ and $x \in \Lambda^\infty$, we define
\begin{align*}
(g \cdot x)(p,q)&:=g \vert_{x(0,p)} \cdot x(p,q)\qforal(p,q) \in \Omega_k,\\
g \vert_x(p)&:=g \vert_{x(0,p)}\qforal p\in \bN^k.
\end{align*}
Then $g\cdot x\in \Lambda^\infty$ and $g \vert_x$ is a function from $\mathbb{N}^k$ to $G$. The following lemma gives some basic properties of $g\cdot x$ and $g \vert_x$.

\begin{lem}
\label{L:5.2}
Let $\Lambda$ be a self-similar $k$-graph over a group $G$. Then
\begin{itemize}
\item[(i)] for $g \in G$, the map $\Lambda^\infty \to \Lambda^\infty,x \mapsto g \cdot x$ is a homeomorphism;
\item[(ii)] the map $G \times \Lambda^\infty \to \Lambda^\infty, (g,x) \mapsto g \cdot x$ induces a group action of $G$ on $\Lambda^\infty$;
\item[(iii)] $\sigma^{p}(g \cdot x)=g \vert_{x(0,p)} \cdot \sigma^p(x)$ for all $g \in G$, $p \in \mathbb{N}^k$ and $x \in \Lambda^\infty$;
\item[(iv)] $g \cdot (\mu x)=(g \cdot \mu)(g \vert_\mu \cdot x)$ for all $g \in G$, $\mu \in \Lambda$ and $x \in s(\mu)\Lambda^\infty$.
\end{itemize}
\end{lem}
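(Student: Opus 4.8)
The plan is to verify each of the four assertions directly from the defining formulas for $g\cdot x$ and $g|_x$, using the self-similarity axioms in Definition~\ref{D:sskg} together with Lemma~\ref{L:5.2}'s predecessor lemma (the one about $g|_\mu\cdot s(\nu)$). For the statements that need it, I would first record the identity
\[
(g\cdot x)(0,p)=g\cdot x(0,p)\qforal p\in\bN^k,\ g\in G,\ x\in\Lambda^\infty,
\]
which is immediate from the definition since $g|_{x(0,0)}=g|_{s(x(0,0))}=g$ by axiom (ii), and more generally from $x(0,p)=x(0,0)x(0,p)$ together with axiom (i).

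\textbf{Proof of (iv).} Given $\mu\in\Lambda$ and $x\in s(\mu)\Lambda^\infty$, set $y:=\mu x\in r(\mu)\Lambda^\infty$, so $y(0,p+d(\mu))=\mu\,x(0,p)$. For $(p,q)\in\Omega_k$ I would compute $(g\cdot y)(p+d(\mu),q+d(\mu))$ from the definition and use axioms (i) and (iii) to split $g|_{\mu x(0,p)}=g|_\mu|_{x(0,p)}$, which identifies the tail of $g\cdot y$ past coordinate $d(\mu)$ with $g|_\mu\cdot x$; meanwhile the first $d(\mu)$ coordinates of $g\cdot y$ give $g\cdot\mu$ (again since $g|_{y(0,r)}=g$ for $r\le d(\mu)$ coordinatewise whenever those are vertex segments, and axiom (i) for the edge pieces). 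Concatenating via the factorization property yields $g\cdot(\mu x)=(g\cdot\mu)(g|_\mu\cdot x)$. Part (iii) is the special case $\mu=x(0,p)$ read off the definition directly, combined with (iv): $\sigma^p(g\cdot x)$ has $(r,t)$-entry $g|_{x(0,p+r)}\cdot x(p+r,p+t)=g|_{x(0,p)}|_{x(p,p+r)}\cdot x(p+r,p+t)$ by axiom (iii), which is exactly $\bigl(g|_{x(0,p)}\cdot\sigma^p(x)\bigr)(r,t)$.

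\textbf{Proof of (i) and (ii).} For (ii) I would check the two group-action axioms: $1_G\cdot x=x$ is immediate from axiom (iv) ($1_G|_{x(0,p)}=1_G$), and $(gh)\cdot x=g\cdot(h\cdot x)$ follows by comparing $(p,q)$-entries: the left side is $(gh)|_{x(0,p)}\cdot x(p,q)$, which by axiom (v) equals $\bigl(g|_{h\cdot x(0,p)}\bigr)\bigl(h|_{x(0,p)}\bigr)\cdot x(p,q)$, and since $h\cdot x(0,p)=(h\cdot x)(0,p)$ by the identity recorded above, this is $g|_{(h\cdot x)(0,p)}\cdot\bigl(h|_{x(0,p)}\cdot x(p,q)\bigr)=g|_{(h\cdot x)(0,p)}\cdot(h\cdot x)(p,q)=(g\cdot(h\cdot x))(p,q)$. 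Given (ii), each map $x\mapsto g\cdot x$ is a bijection with inverse $x\mapsto g^{-1}\cdot x$, so for (i) it remains to check continuity; since the topology has basis $\{Z(\nu):\nu\in\Lambda\}$ and $g\cdot Z(\nu)=Z(g\cdot\nu)$ by (iv), the map is an open bijection and hence a homeomorphism.

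\textbf{Main obstacle.} None of the four parts is genuinely hard once the bookkeeping identity $(g\cdot x)(0,p)=g\cdot x(0,p)$ is in place; the only real care is needed in part (iv), where one must correctly match the ``head'' $g\cdot\mu$ (using that $g|$ restricted to initial segments of $\mu$ obeys axiom (iii)) with the ``tail'' $g|_\mu\cdot x$, and invoke the factorization property to conclude these concatenate to a well-defined infinite path equal to $g\cdot(\mu x)$. I expect the write-up to be a short induction on $|\mu|$ reducing to the case $\mu\in\Lambda^\Be$, or else a direct coordinate comparison as sketched above.
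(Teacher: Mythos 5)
Your proposal is correct and follows essentially the same route as the paper: direct coordinate comparison of $(p,q)$-entries using the self-similarity axioms, with the computations for (ii) via axiom (v) and for (iii) via axiom (iii) matching the paper's verbatim. The paper simply omits (i) and (iv) as straightforward, which you fill in correctly (including the useful observation $g\cdot Z(\nu)=Z(g\cdot\nu)$ for openness in part (i)).
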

\begin{proof}
The proofs of (i) and (iv) are straightforward. In what follows, we only prove (ii) and (iii).

(ii) Let $g_1,g_2\in G$, $x\in \Lambda^\infty$ and $(p,q)\in \Omega_k$. Using Definition \ref{D:sskg} (1), we have
\begin{align*}
(g_1\cdot(g_2\cdot x))(p,q)
&=g_1|_{g_2\cdot x(p,q)}((g_2\cdot x)(p,q))\\
&=g_1|_{g_2\cdot x(p,q)}(g_2|_{x(0,p)}\cdot x(p,q))\\
&=(g_1g_2)|_{x(0,p)}\cdot x(p,q)\\
&=((g_1g_2)\cdot x)(p,q).
\end{align*}

(iii) For $p\in \bN^k$ and $(s,t)\in\Omega_k$, repeatedly using Definition \ref{D:sskg} (5) gives
\begin{align*}
\sigma^p(g\cdot x)(s,t)
&=(g\cdot x)(s+p,t+p)\\
&=g|_{x(0,s+p)}\cdot x(s+p,t+p)\\
&=g|_{x(0,s+p)}\cdot \sigma^p(x)(s,t)\\
&=g|_{x(0,p)}|_{\sigma^p(x)(0,s)}\cdot \sigma^p(x)(s,t)\\
&=(g|_{x(0,p)}\cdot{\sigma^p(x))}(s,t).
\end{align*}
We are done.
\end{proof}

We now associate a self-similar $k$-graph a universal C*-algebra.

\begin{defn}\label{define O_G,Lambda}
Let $\Lambda$ with $\vert\Lambda^0\vert<\infty$ be a self-similar $k$-graph over a group $G$. Define the \textit{self-similar $k$-graph C*-algebra} $\mathcal{O}_{G,\Lambda}$ to be the universal unital C*-algebra generated by a family of unitaries $\{u_g\}_{g \in G}$ and
a Cuntz-Krieger family $\{s_\mu\}_{\mu \in \Lambda}$ satisfying
\begin{itemize}

\item[(i)]
$u_{gh}=u_g u_h$ for all $g$ and $h \in G$;
\item[(ii)]
$u_g s_\mu=s_{g \cdot \mu} u_{g \vert_\mu}$ for all $g \in G$ and $\mu \in \Lambda$.
\end{itemize}
\end{defn}

In other words, the self-similar $k$-graph C*-algebra $\O_{G,\Lambda}$ is generated by a `universal' pair $(u,s)$ of representations, where $u$ is a unitary representation of $G$ and
$s$ is a representation of the $k$-graph C*-algebra $\O_\Lambda$, such that $u$ and $s$
are compatible with the given self-similar action (i.e., (ii) holds).  The remark below shows that $\O_{G,\Lambda}$ exists nontrivially: $s_\mu \neq 0$ and $u_g \neq 0$ for all $\mu \in \Lambda$ and $g \in G$.

\begin{rem}
\label{R:nontrivial}
For $\mu \in \Lambda$ and $g \in G$, define
\begin{align*}
S_\mu(\delta_x):=\begin{cases}
   \delta_{\mu x}   &\text{ if $s(\mu)=x(0,0)$} \\
   0   &\text{ otherwise },
\end{cases}
\hskip 1cm  U_g(\delta_x):=\delta_{g \cdot x}.
\end{align*}
By Lemma \ref{L:5.2}, one can check that $\{S_\mu\}_{\mu \in \Lambda}$ is a Cuntz-Krieger $\Lambda$-family in $\B(\ell^2(\Lambda^\infty))$ and $\{U_g\}_{g \in G}$ is a family of unitaries in $\B(\ell^2(\Lambda^\infty))$ satisfying Definition~\ref{define O_G,Lambda} (i)-(ii). Observe that $S_v\ne 0$ and $U_g \neq 0$ for all $v \in \Lambda^0,g \in G$. So $s_\mu \neq 0$ and $u_g \neq 0$ for all $\mu \in \Lambda,g \in G$.
\end{rem}

In the sequel, we give some examples of $\O_{G,\Lambda}$.

\begin{egs}
Let $\Lambda$ with $\vert\Lambda^0\vert<\infty$ be a self-similar $k$-graph over a group $G$.
\begin{enumerate}
\item[1.]
Suppose that $G$ is trivial. Then $\O_{G,\Lambda}\cong \O_\Lambda$.

\item[2.]
Suppose that $g \vert_\mu=g$ for all $g\in G$ and $\mu\in \Lambda$. Let $\{t_\mu\}_{\mu \in \Lambda}$ be the Cuntz-Krieger $\Lambda$-family of $\mathcal{O}_\Lambda$. Then there exists a homomorphism $\alpha:G \to \Aut(\mathcal{O}_\Lambda)$ such that $\Phi(g)(t_\mu)=t_{g \cdot \mu}$ for all $g \in G,\mu \in \Lambda$. It follows easily that $\O_{G,\Lambda}\cong \O_\Lambda\rtimes_{\alpha} G$.

\item[3.]
Suppose that $\Lambda$ is a 1-graph. Then $\O_{G,\Lambda}$ is the C*-algebra studied by Exel-Pardo in \cite{EP17}, which includes Katsura algebras (\cite{Kat082}) and C*-algebras of self-similar groups constructed by Nekrashevych (\cite{Nek04, Nek09}).

\item[4.]
Suppose that $\Lambda$ is a single-vertex $k$-graph. For $1 \leq i \leq k$, let $n_i:=\vert\Lambda^{e_i}\vert$, and $\Lambda^{e_i}=\{x_{\fs}^i\}_{\fs =0}^{n_i-1}$. Suppose that for any $\mu \in \Lambda,h \in G$, there exists $g \in G$ such that $g \vert_\mu=h$. Then $\O_{G,\Lambda}$ is isomorphic to the boundary quotient C*-algebra $\Q(\Lambda\bowtie G)$ due to \cite[Theorem~3.3]{LY17}. In particular, when $G=\mathbb{Z}$ and
\begin{align*}
1\cdot {x_\fs^i}&=x_{(\fs+1)\ \text{mod } n_i}^i\text{ if }0\le \fs\le n_i-1\\
1|_{x_\fs^i}&=\left\{
\begin{matrix}
0 &\text{ if }0\le \fs< n_i-1\\
1 &\text{ if }\fs=n_i-1
\end{matrix}\right.\\
x^i_\fs x^j_\ft = x^j_{\ft'} x^i_{\fs'} &\text{ if } 1\leq i<j \leq k, \fs+\ft n_i=\ft'+\fs' n_j,
\end{align*}
we studied $\O_{\mathbb{Z},\Lambda}$ intensively in \cite{LY17}.
\end{enumerate}

The last two examples actually provide the main motivation of this paper.
\end{egs}

The following is a C*-algebra version of Lemma \ref{extend self similar}, whose easy proof is omitted.

\begin{lem}
Let $\Lambda$ be a self-similar $k$-graph over a group $G$ with $\vert\Lambda^0\vert<\infty$. Suppose that $\{U_g\}_{g \in G}$ is a family of unitaries
and $\{S_\mu:\mu \in \Lambda^0 \cup \Lambda^\Be\}$ is a family of partial isometries in a unital C*-algebra $B$ satisfying
\begin{enumerate}
\item $\{S_v\}_{v \in \Lambda^0}$ is a family of mutually orthogonal projections;
\item $\sum_{v \in \Lambda^0}S_v=1_B$;
\item $S_{\mu}^* S_{\mu}=S_{s(\mu)}$ for all $\mu \in \Lambda^\Be$;
\item $S_{\mu} S_{\nu}=S_\alpha S_\beta$ if $\mu,\nu,\alpha,\beta \in \Lambda^\Be$ with $\mu\nu=\alpha\beta$;
\item $S_v=\sum_{\mu \in v \Lambda^n}S_\mu S_\mu^*$ for all $v \in \Lambda^0$, $n\in\{e_i:1 \leq i \leq k\}$;
\item $U_{gh}=U_g U_h$ for all $g,h \in G$;
\item $U_g S_\mu=S_{g \cdot \mu} U_{g \vert_\mu}$ for all $g \in G,\mu \in \Lambda^0 \cup \Lambda^\Be$.
\end{enumerate}
Then there exists a unique Cuntz-Krieger $\Lambda$-family $\{T_\mu:\mu \in \Lambda\}$ in $B$ such that $T_\mu=S_\mu$ for all $\mu \in \Lambda^0 \cup \Lambda^\Be$ and $U_g T_\mu=T_{g \cdot \mu} U_{g \vert_\mu}$ for all $g \in G,\mu \in \Lambda$.
\end{lem}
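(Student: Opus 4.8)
The plan is to build the extension inductively on the degree $|\mu|$ using the factorization property of $\Lambda$, and then to verify the Cuntz--Krieger relations by a degree induction as well. The key point is that the hypotheses are exactly what is needed to make the naive formulas well-defined.

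\medskip

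\textbf{Step 1: Uniqueness.} Suppose $\{T_\mu\}$ is any Cuntz--Krieger $\Lambda$-family in $B$ with $T_\mu=S_\mu$ on $\Lambda^0\cup\Lambda^\Be$ satisfying $U_g T_\mu=T_{g\cdot\mu}U_{g\vert_\mu}$. For any $\mu\in\Lambda$ with $|\mu|\ge 2$, the factorization property lets us write $\mu=\mu_1\cdots\mu_n$ with each $\mu_i\in\Lambda^\Be$, and then relation \eqref{S_munu=S_muS_nu} forces $T_\mu=T_{\mu_1}\cdots T_{\mu_n}=S_{\mu_1}\cdots S_{\mu_n}$. Hence $T_\mu$ is determined by $\{S_\mu:\mu\in\Lambda^0\cup\Lambda^\Be\}$, giving uniqueness. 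This also tells us what the definition of $T_\mu$ must be.

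\medskip

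\textbf{Step 2: Well-definedness of $T_\mu$.} Define $T_\mu:=S_{\mu_1}\cdots S_{\mu_n}$ for any factorization $\mu=\mu_1\cdots\mu_n$ into edges (and $T_v=S_v$ for $v\in\Lambda^0$). To see this is independent of the chosen factorization, note that any two factorizations of $\mu$ into edges are connected by a sequence of transpositions of adjacent edges $\mu_i\mu_{i+1}\rightsquigarrow\alpha\beta$ with $\mu_i\mu_{i+1}=\alpha\beta$, $\mu_i,\mu_{i+1},\alpha,\beta\in\Lambda^\Be$ (this is precisely the content of the factorization property applied to a degree-$2$ segment). By hypothesis (iv), $S_{\mu_i}S_{\mu_{i+1}}=S_\alpha S_\beta$, so the product is unchanged under each transposition; hence $T_\mu$ is well-defined.

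\medskip

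\textbf{Step 3: Cuntz--Krieger relations.} I will verify \eqref{S_v ort proj}--\eqref{CK-condition for all path row finite} of the definition of a Cuntz--Krieger $\Lambda$-family for $\{T_\mu\}$. Relation \eqref{S_v ort proj} is hypothesis (i). Relation \eqref{S_munu=S_muS_nu}, $T_{\mu\nu}=T_\mu T_\nu$, is immediate from the definition by concatenating edge factorizations. For \eqref{S_mu*S_mu=S_s(mu)}, i.e. $T_\mu^*T_\mu=T_{s(\mu)}$, I argue by induction on $|\mu|$: writing $\mu=\mu' e$ with $e\in\Lambda^\Be$, we get $T_\mu^*T_\mu=T_e^*(T_{\mu'}^*T_{\mu'})T_e=T_e^*T_{r(e)}T_e$; since $T_{\mu'}^*T_{\mu'}=T_{s(\mu')}=T_{r(e)}$ by induction, and $T_e=T_{r(e)}T_e$ (as $S_e=S_{r(e)}S_e$, which follows from $\sum_v S_v=1_B$ together with the source projection relation — more carefully, $S_e^*S_e=S_{s(e)}$ and orthogonality of the $S_v$ force $S_{r(e)}S_e=S_e$ after using relation (v) to see $S_e S_e^*\le S_{r(e)}$), we obtain $T_e^*T_e=S_{s(e)}=T_{s(\mu)}$. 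Finally, for \eqref{CK-condition for all path row finite}, I fix $v$ and $n\in\bN^k$ and induct on $|n|$: the case $n=e_i$ is hypothesis (v), and for general $n=n'+e_i$ one writes $v\Lambda^n=\bigsqcup_{\mu\in v\Lambda^{n'}}\mu(s(\mu)\Lambda^{e_i})$ via the factorization property, so $\sum_{\lambda\in v\Lambda^n}T_\lambda T_\lambda^*=\sum_{\mu\in v\Lambda^{n'}}T_\mu\big(\sum_{e\in s(\mu)\Lambda^{e_i}}T_e T_e^*\big)T_\mu^*=\sum_{\mu\in v\Lambda^{n'}}T_\mu T_{s(\mu)}T_\mu^*=\sum_{\mu\in v\Lambda^{n'}}T_\mu T_\mu^*=T_v$, using \eqref{S_mu*S_mu=S_s(mu)} from the previous step and the inductive hypothesis. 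Thus $\{T_\mu\}$ is a Cuntz--Krieger $\Lambda$-family.

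\medskip

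\textbf{Step 4: Covariance with $U$.} It remains to check $U_g T_\mu=T_{g\cdot\mu}U_{g\vert_\mu}$ for all $g\in G$ and $\mu\in\Lambda$. I induct on $|\mu|$; the base cases $|\mu|\le 1$ are hypothesis (vii). For $\mu=\mu_1\mu'$ with $\mu_1\in\Lambda^\Be$, using Definition~\ref{D:sskg}-style bookkeeping established in Lemma~\ref{extend self similar} (and the fact that the self-similar action on $\Lambda$ is the one extending the given data, so $g\cdot(\mu_1\mu')=(g\cdot\mu_1)(g\vert_{\mu_1}\cdot\mu')$ and $g\vert_{\mu_1\mu'}=g\vert_{\mu_1}\vert_{\mu'}$), we compute
\[
U_g T_{\mu_1\mu'}=U_g T_{\mu_1}T_{\mu'}=T_{g\cdot\mu_1}U_{g\vert_{\mu_1}}T_{\mu'}=T_{g\cdot\mu_1}T_{g\vert_{\mu_1}\cdot\mu'}U_{g\vert_{\mu_1}\vert_{\mu'}}=T_{g\cdot(\mu_1\mu')}U_{g\vert_{\mu_1\mu'}},
\]
where the third equality is the inductive hypothesis applied to $\mu'$. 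This completes the proof.

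\medskip

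\textbf{Main obstacle.} The genuinely delicate part is Step 2 (well-definedness): one must know that any two edge-factorizations of a fixed path are connected through adjacent transpositions, and that hypothesis (iv) is precisely strong enough to neutralize each such transposition. Everything else is a routine but careful degree induction; one small subtlety worth getting right is deducing $S_{r(e)}S_e=S_e$ for $e\in\Lambda^\Be$ from the hypotheses, which is where the relation $\sum_{v}S_v=1_B$ together with relations (iii) and (v) gets used.
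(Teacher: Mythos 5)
Your proof is correct. The paper explicitly omits its own proof of this lemma as ``easy,'' and your argument --- defining $T_\mu$ via an edge factorization, using hypothesis (iv) together with the uniqueness clause of the factorization property to see that adjacent transpositions of the degree word connect any two edge factorizations, and then verifying the Cuntz--Krieger relations and the covariance relation by induction on $\vert\mu\vert$ --- is exactly the routine argument the authors intend, with the genuinely delicate point (well-definedness) correctly identified and handled.
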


\begin{prop}\label{generator of O_G, Lambda}
Let $\Lambda$ be a self-similar $k$-graph over a group $G$ with $\vert\Lambda^0\vert<\infty$. Then $\spn\{s_\mu u_g s_\nu^*: \mu, \nu \in \Lambda, g \in G, s(\mu)= g \cdot s(\nu)\}$ is a dense $*$-subalgebra of $\mathcal{O}_{G,\Lambda}$.
\end{prop}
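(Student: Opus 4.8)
The plan is to show that the set $\mathcal{A} := \spn\{s_\mu u_g s_\nu^* : \mu,\nu\in\Lambda,\ g\in G,\ s(\mu)=g\cdot s(\nu)\}$ is a $*$-subalgebra, and then deduce density from the fact that $\mathcal{O}_{G,\Lambda}$ is generated by $\{u_g\}_{g\in G}\cup\{s_\mu\}_{\mu\in\Lambda}$. First I would note that $\mathcal{A}$ is manifestly closed under taking adjoints: $(s_\mu u_g s_\nu^*)^* = s_\nu u_{g^{-1}} s_\mu^*$, and the constraint $s(\mu)=g\cdot s(\nu)$ is equivalent to $s(\nu)=g^{-1}\cdot s(\mu)$ by applying $\varphi(g^{-1})$. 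Next I would verify that $\mathcal{A}$ is closed under multiplication. A product $(s_\mu u_g s_\nu^*)(s_\alpha u_h s_\beta^*)$ reduces, via $s_\nu^* s_\alpha$, to the $k$-graph computation: using the factorization property and the Cuntz-Krieger relations one has $s_\nu^* s_\alpha = \sum_{(\lambda,\delta)\in\Lambda^{\min}(\nu,\alpha)} s_\lambda s_\delta^*$ (this is the standard $k$-graph identity, valid since $\Lambda$ is row-finite without sources), where each term satisfies $\nu\lambda = \alpha\delta$. So it suffices to handle $s_\mu u_g s_\lambda s_\delta^* u_h s_\beta^*$ for a single pair $(\lambda,\delta)$.

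For such a term I would push the unitaries past the $s$'s using Definition~\ref{define O_G,Lambda}(ii): $u_g s_\lambda = s_{g\cdot\lambda} u_{g|_\lambda}$, and $s_\delta^* u_h = (u_{h^{-1}} s_\delta)^* = (s_{h^{-1}\cdot\delta} u_{h^{-1}|_\delta})^* = u_{(h^{-1}|_\delta)^{-1}} s_{h^{-1}\cdot\delta}^*$. Combining, the term becomes (up to absorbing the resulting group element into a single unitary) something of the form $s_{\mu(g\cdot\lambda)}\, u_{g'} \, s_{\beta(h^{-1}\cdot\delta)}^*$ after one more application of (ii) to slide the remaining unitary $u_{g|_\lambda (h^{-1}|_\delta)^{-1}}$ to the correct position — I'd apply $u_g s_\mu = s_{g\cdot\mu}u_{g|_\mu}$ once more to absorb it. Here one must check the source condition: using the lemma that $g|_\mu\cdot s(\mu)=g\cdot s(\mu)$ and $g|_\mu\cdot s(\nu)=g\cdot s(\nu)$ (the lemma right after Definition~\ref{D:sskg}), together with the composability hypotheses $s(\mu)=g\cdot s(\nu)$ and $s(\alpha)=h\cdot s(\beta)$ and the relations $\nu\lambda=\alpha\delta$, one verifies that the resulting triple $(\mu(g\cdot\lambda),\, g',\, \beta(h^{-1}\cdot\delta))$ again satisfies $s(\mu(g\cdot\lambda)) = g'\cdot s(\beta(h^{-1}\cdot\delta))$, so the product lies in $\mathcal{A}$.

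Finally, $\mathcal{A}$ contains all generators: $s_\mu = s_\mu u_{1_G} s_{s(\mu)}^*$ (with $s(\mu)=1_G\cdot s(\mu)$, using the Cuntz-Krieger relation $s_\mu s_{s(\mu)}^* = s_\mu$ and $s_\mu^* s_\mu = s_{s(\mu)}$), and $u_g = \sum_{v\in\Lambda^0} s_v u_g s_{g^{-1}\cdot v}^*$ since $\sum_{v\in\Lambda^0} s_v = 1$ (as $\Lambda^0$ is finite), using $u_g s_{g^{-1}\cdot v}^* = (s_{g^{-1}\cdot v} u_{g^{-1}})^* u_? $ — more directly, $s_v u_g s_{g^{-1}\cdot v}^* = s_v u_g u_{g^{-1}} s_v^* u_g \cdots$; cleaner is $s_v u_g s_{g^{-1}\cdot v}^* = s_v s_{g\cdot(g^{-1}\cdot v)} u_{g|_{g^{-1}\cdot v}} s_{g^{-1}\cdot v}^*\cdot(\text{reorganize})$, so I would instead simply observe $u_g = u_g \sum_v s_v s_v^* = \sum_v u_g s_v s_v^*$ and rewrite $u_g s_v = s_{g\cdot v} u_g$ (as $g|_v=g$), giving $u_g = \sum_v s_{g\cdot v} u_g s_v^*$, which is a finite sum of elements of $\mathcal{A}$ since $s(g\cdot v)=g\cdot v = g\cdot s(v)$. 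Being a $*$-subalgebra containing the generators, $\ol{\mathcal{A}} = \mathcal{O}_{G,\Lambda}$. The main obstacle is the bookkeeping in the multiplication step: correctly tracking which group element ends up in the middle and confirming the source-composability condition survives, which hinges on the cocycle identity (v) of Definition~\ref{D:sskg} and the source-preservation lemma.
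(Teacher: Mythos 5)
Your proposal is correct and follows essentially the same route as the paper: check $*$-closure, check multiplicative closure via the standard identity $s_\nu^* s_\alpha=\sum_{(\lambda,\delta)\in\Lambda^{\min}(\nu,\alpha)}s_\lambda s_\delta^*$ together with the commutation relation $u_g s_\lambda=s_{g\cdot\lambda}u_{g|_\lambda}$, and observe that the generators lie in the span via $s_\mu=s_\mu u_{1_G}s_{s(\mu)}^*$ and $u_g=\sum_{v\in\Lambda^0}s_{g\cdot v}u_g s_v^*$. You actually supply more detail than the paper (which dismisses multiplicative closure with ``it follows from the definition''), and your bookkeeping of the middle group element $g|_\lambda(h^{-1}|_\delta)^{-1}$ and the source condition is sound; only the aside about needing ``one more application of (ii)'' is superfluous, since the unitary already sits in the correct position.
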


\begin{proof}
First notice that $s(\mu)\ne g\cdot s(\nu)$ then $s_\mu u_g s_\nu^*=0$ as
\[
(s_\mu u_g s_\nu^*)^*s_\mu u_g s_\nu^*=s_\nu u_{g^{-1}}s_\mu^*s_\mu u_g s_\nu^*
 =s_\nu u_{g^{-1}} s_{s(\mu)} u_g s_\nu^*=s_\nu  s_{g^{-1}\cdot s_{s(\mu)}}  s_\nu^*.
\]
It follows from Definition~\ref{define O_G,Lambda}  that the given linear span is closed under the multiplication.
Also, for $\mu \in \Lambda$ and $g\in G$,  we have $s_\mu=s_\mu u_{1_G} s_{s(\mu)}^*$ and $u_g=\sum_{v\in \Lambda^0}s_{g\cdot v} u_g s_v$.
Therefore this linear span is dense in $\mathcal{O}_{G,\Lambda}$.
\end{proof}


\section{Realizing $\O_{G,\Lambda}$ as a Cuntz-Pimsner Algebra}

\label{S:OC}

Let $\Lambda$ be a self-similar $k$-graph over a group $G$ with $\vert\Lambda^0\vert<\infty$. In this section, we shall construct a product system $X_{G,\Lambda}$ over $\bN^k$
such that its Cuntz-Pimsner algebra $\mathcal{O}_{X_{G,\Lambda}}$ is isomorphic to $\O_{G,\Lambda}$.

Suppose that a group $G$ acts on a $k$-graph $\Lambda$ with $\vert\Lambda^0\vert<\infty$ self-similarly. Then the action restricts to an action of $G$ on $\Lambda^0$, say
$\varphi$. Let $C(\Lambda^0)$ be the set of all complex-valued functions on $\Lambda^0$. Then we obtain a C*-dynamical system $(C(\Lambda^0),G, {\varphi})$ such that ${\varphi}(g)(\delta_v)=\delta_{g \cdot v}$ for all $g \in G,v \in \Lambda^0$. Denote by $A_{G,\Lambda}:=C(\Lambda^0) \rtimes_{{\varphi}} G$, and let $\{j(\delta_v),j(g)\}_{v \in \Lambda^0,g \in G}$ be the generator set of $A_{G,\Lambda}$ (see \cite[Theorem~2.61]{Wil07}).

Recall that $A_{G,\Lambda}$ is a right Hilbert $A_{G,\Lambda}$-module (\cite{Lan95}).
For each $\mu \in \Lambda$, define a closed $A_{G,\Lambda}$-submodule of $A_{G,\Lambda}$ by
\[
A_{G,\Lambda}^{\mu}:=j(\delta_{s(\mu)})A_{G,\Lambda}=\overline{\mathrm{span}}\{j(\delta_{s(\mu)})j(g):g \in G\}.
\]
Then, for $p \in \mathbb{N}^k$, we form a right Hilbert $A_{G,\Lambda}$-module as follows:
\begin{align*}
X_{G,\Lambda,p} := \begin{cases}
    {\displaystyle \bigoplus_{\mu \in \Lambda^p}}A_{G,\Lambda}^{\mu} &\text{ if $p \neq 0$} \\
    A_{G,\Lambda} &\text{ if $p=0$ }.
\end{cases}
\end{align*}
Notice that two distinct paths $\mu, \nu\in\Lambda^p$ with $s(\mu)=s(\nu)$ produce two copies in $X_{G,\Lambda,p}$. To avoid confusion, let us from now on write $\chi_\mu \in X_{G,\Lambda,p}$ by
\begin{align*}
\chi_\mu(\nu):= \begin{cases}
    j(\delta_{s(\mu)})  &\text{ if $\mu=\nu$} \\
    0 &\text{ otherwise }.
\end{cases}
\end{align*}


One nice property of $X_{G,\Lambda,p}$ is given below.

\begin{lem}
\label{L:K=L}
$\mathcal{L}(X_{G,\Lambda,p})=\mathcal{K}(X_{G,\Lambda,p})$ for every $p\in\bN^k$.
\end{lem}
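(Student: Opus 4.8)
The goal is to show that $\mathcal{L}(X_{G,\Lambda,p}) = \mathcal{K}(X_{G,\Lambda,p})$, i.e., that every adjointable operator on the Hilbert module $X_{G,\Lambda,p}$ is compact. The case $p=0$ is immediate: $X_{G,\Lambda,0} = A_{G,\Lambda}$, which is a unital C*-algebra (since $\Lambda^0$ is finite, $\sum_{v}j(\delta_v)$ is a unit for $C(\Lambda^0)\rtimes G$), and $\mathcal{L}(A) = M(A) = A = \mathcal{K}(A)$ for any unital $A$. So the plan is to reduce the general case $p\neq 0$ to this observation. For $p\neq 0$ write $X_{G,\Lambda,p} = \bigoplus_{\mu\in\Lambda^p} A_{G,\Lambda}^\mu$ where $A_{G,\Lambda}^\mu = j(\delta_{s(\mu)})A_{G,\Lambda}$ is a complemented submodule of $A_{G,\Lambda}$, and the sum over $\mu\in\Lambda^p$ is finite because $\Lambda$ is row-finite with $\Lambda^0$ finite (so $\Lambda^p = \bigcup_{v\in\Lambda^0} v\Lambda^p$ is a finite set).

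\textbf{Key steps.} First I would record that each $A_{G,\Lambda}^\mu = j(\delta_{s(\mu)})A_{G,\Lambda}$ is of the form $qA$ for the projection $q = j(\delta_{s(\mu)})\in A_{G,\Lambda}$, hence is an orthogonally complemented right Hilbert $A$-submodule of $A = A_{G,\Lambda}$. For such a module, $\mathcal{K}(qA) = qAq$ and $\mathcal{L}(qA) = qM(A)q$; since $A$ is unital, $M(A)=A$, so $\mathcal{L}(qA) = qAq = \mathcal{K}(qA)$. Concretely, the rank-one operator $\Theta_{q j(g),\, q j(h)}$ acts as $x\mapsto qj(g)\langle qj(h), x\rangle = qj(g)j(h)^*q^*x$, and finite sums and limits of these exhaust $qAq$; in particular the identity operator on $qA$ is $\Theta_{q,q}$, so $\mathcal{L}(qA)$ is unital with the same unit, forcing $\mathcal{L}(qA)=\mathcal{K}(qA)$. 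Second, I would pass to the finite direct sum: for a finite family of Hilbert modules $\{Y_i\}_{i\in F}$, one has $\mathcal{L}(\bigoplus_i Y_i) = \{(T_{ij}) : T_{ij}\in\mathcal{L}(Y_j,Y_i)\}$ as a matrix algebra, and $\mathcal{K}(\bigoplus_i Y_i)$ is the corresponding matrix algebra with $\mathcal{K}(Y_j,Y_i)$ entries; when $F$ is finite and each diagonal block satisfies $\mathcal{L}(Y_i)=\mathcal{K}(Y_i)$ (equivalently each $Y_i$ has a unit for its compacts, so $\bigoplus_i Y_i$ does too via the block-diagonal projection), the whole matrix algebra of adjointables coincides with the matrix algebra of compacts. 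Applying this with $Y_\mu = A_{G,\Lambda}^\mu$ over the finite index set $\Lambda^p$ gives the claim.

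\textbf{Main obstacle.} There is no serious obstacle; the only thing to be careful about is the finiteness of $\Lambda^p$, which is exactly where the standing hypotheses ``row-finite'' and ``$\Lambda^0$ finite'' (equivalently $|\Lambda^0|<\infty$, assumed throughout this section) are used — without finiteness of the direct sum the identity operator need not be compact. One should also make sure the submodules $A_{G,\Lambda}^\mu$ are genuinely complemented with complement $(1-j(\delta_{s(\mu)}))A_{G,\Lambda}$, which is clear since $j(\delta_{s(\mu)})$ is a projection in the unital algebra $A_{G,\Lambda}$. Everything else is the standard fact that, over a unital coefficient algebra, a finitely generated projective Hilbert module has all adjointable operators compact.
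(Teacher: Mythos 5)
Your proof is correct and is essentially the paper's argument: the paper simply verifies directly that $1_{\mathcal{L}(X_{G,\Lambda,p})}=\sum_{\mu\in\Lambda^p}\Theta_{\chi_\mu,\chi_\mu}$, a finite sum of rank-one operators (finiteness of $\Lambda^p$ coming from row-finiteness and $|\Lambda^0|<\infty$, exactly as you note), so the identity is compact and $\mathcal{L}=\mathcal{K}$. Your block-decomposition into the corners $j(\delta_{s(\mu)})A_{G,\Lambda}$ is just a more structured packaging of the same computation, since your $\Theta_{q,q}$ in the $\mu$-th summand is precisely $\Theta_{\chi_\mu,\chi_\mu}$.
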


\begin{proof}
This is clearly true if $p=0$ as $A_{G,\Lambda}$ is unital.
But for $p \neq 0$,  one has
$1_{\mathcal{L}(X_{G,\Lambda,p})}=\sum_{\mu \in \Lambda^p}\Theta_{\chi_\mu, \chi_\mu}$
since
\[
\sum_{\mu \in \Lambda^p}\Theta_{\chi_\mu, \chi_\mu}(\chi_\nu a_\nu)
=\sum_{\mu \in \Lambda^p}\chi_\mu\langle \chi_\mu, \chi_\nu a_\nu\rangle
=\sum_{\mu \in \Lambda^p}\chi_\mu \chi_\mu^* \chi_\nu a_\nu
=\chi_\nu a_\nu
\]
for all $\nu\in\Lambda^p$.
\end{proof}

Now we endow each $X_{G,\Lambda,p}$ with the C*-correspondence structure over $A_{G,\Lambda}$.
If $p=0$, then $X_{G,\Lambda,0}=A_{G,\Lambda}$, and so it is a C*-correspondence over $A_{G,\Lambda}$. For $p \neq 0$ and for $v \in \Lambda^0$, define $\pi_{G,\Lambda,p}(j(\delta_v)) \in \mathcal{L}(X_{G,\Lambda,p})$ to be the projection onto the closed $A_{G,\Lambda}$-submodule $\bigoplus_{\mu \in v\Lambda^p}A_{G,\Lambda}^{\mu}$. For $g \in G$, define $U_{G,\Lambda,p}(g) \in U(\mathcal{L}(X_{G,\Lambda,p}))$ by
\[
U_{G,\Lambda,p}(g)\big(\chi_\mu a_\mu\big)=  \chi_{g\cdot \mu} j(g|_\mu) a_\mu\qforal \mu \in \Lambda^p, \ a_\mu \in A_{G,\Lambda}^\mu.
\]
It is easy to check that $(\pi_{G,\Lambda,p},U_{G,\Lambda,p})$ is a covariant homomorphism of the C*-dynamical system $(C(\Lambda^0),G,{\varphi})$. That is,
$U_{G,\Lambda,p}(g)\pi_{G,\Lambda,p}(\delta_v)=\pi_{G,\Lambda,p}( \delta_{g\cdot v})U_{G,\Lambda,p}(g)$ for all $g\in G$ and $v\in \Lambda^0$.
Then there exists an essential homomorphism $\phi_{G,\Lambda,p}:A_{G,\Lambda} \to \mathcal{L}(X_{G,\Lambda,p})= \mathcal{K}(X_{G,\Lambda,p})$. So this gives $X_{G,\Lambda,p}$ a
left $A_{G,\Lambda}$-module structure. Therefore, $X_{G,\Lambda,p}$ is a C*-correspondence over $A_{G,\Lambda}$.

The identity in the following lemma turns out to be very useful.

\begin{lem}
\label{L:self-similar}
For $0 \neq p \in \mathbb{N}^k$, we have
\[
j(g) \cdot \chi_\mu = \chi_{g\cdot \mu} j(g|_\mu)\qforal g\in G\text{ and } \mu\in \Lambda^p.
\]
\end{lem}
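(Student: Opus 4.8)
The plan is to unwind both sides of the claimed identity in terms of their action as elements of $\L(X_{G,\Lambda,p})$, using that $\chi_\mu a_\mu$ (with $a_\mu\in A_{G,\Lambda}^\mu$) ranges over a spanning set of $X_{G,\Lambda,p}$, and that two adjointable operators agree iff they agree on such a spanning set. Concretely, recall that $j(g)\cdot x$ denotes the left action $\phi_{G,\Lambda,p}(j(g))(x)$, and that $\phi_{G,\Lambda,p}$ is the essential homomorphism induced by the covariant pair $(\pi_{G,\Lambda,p},U_{G,\Lambda,p})$ via the universal property of $C(\Lambda^0)\rtimes_\varphi G$. Under that correspondence, $j(g)$ maps to $U_{G,\Lambda,p}(g)$. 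So the left-hand side, applied to $\chi_\nu a_\nu$ for $\nu\in\Lambda^p$, is by definition $U_{G,\Lambda,p}(j(g))$ — wait, more precisely $\phi_{G,\Lambda,p}(j(g))=U_{G,\Lambda,p}(g)$, and this operator sends $\chi_\nu a_\nu\mapsto \chi_{g\cdot\nu}\,j(g|_\nu)\,a_\nu$.

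First I would make precise the ambient identification: since $A_{G,\Lambda}=C(\Lambda^0)\rtimes_\varphi G$ is generated by $\{j(\delta_v),j(g)\}$ and $\phi_{G,\Lambda,p}$ is the $*$-homomorphism with $\phi_{G,\Lambda,p}(j(\delta_v))=\pi_{G,\Lambda,p}(j(\delta_v))$ and $\phi_{G,\Lambda,p}(j(g))=U_{G,\Lambda,p}(g)$, the element $j(g)\cdot\chi_\mu$ is literally $U_{G,\Lambda,p}(g)(\chi_\mu)$. Second, I would compute $U_{G,\Lambda,p}(g)(\chi_\mu)$ directly from its defining formula $U_{G,\Lambda,p}(g)(\chi_\mu a_\mu)=\chi_{g\cdot\mu}\,j(g|_\mu)\,a_\mu$, taking $a_\mu=j(\delta_{s(\mu)})=1_{A_{G,\Lambda}^\mu}$ (the identity of the corner); this gives $\chi_{g\cdot\mu}\,j(g|_\mu)\,j(\delta_{s(\mu)})$. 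Third, I would simplify $j(g|_\mu)\,j(\delta_{s(\mu)})$: the natural value is $\chi_{g\cdot\mu}\,j(g|_\mu)$ once one checks $j(g|_\mu)j(\delta_{s(\mu)})=j(g|_\mu)$ as multipliers acting on the target corner $A_{G,\Lambda}^{g\cdot\mu}=j(\delta_{s(g\cdot\mu)})A_{G,\Lambda}$. Here one uses the covariance relation $j(h)j(\delta_v)=j(\delta_{h\cdot v})j(h)$ together with $g|_\mu\cdot s(\mu)=g\cdot s(\mu)=s(g\cdot\mu)$ (the second bullet of the earlier lemma and the $s\circ\varphi=\varphi\circ s$ compatibility), so that $\chi_{g\cdot\mu}j(g|_\mu)j(\delta_{s(\mu)})=\chi_{g\cdot\mu}j(\delta_{s(g\cdot\mu)})j(g|_\mu)=\chi_{g\cdot\mu}j(g|_\mu)$ since $\chi_{g\cdot\mu}$ already carries the projection $j(\delta_{s(g\cdot\mu)})$ on its right. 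That produces exactly the right-hand side.

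I would then remark that since $\chi_\mu$, $\mu\in\Lambda^p$, together with the right $A_{G,\Lambda}$-action, generate $X_{G,\Lambda,p}$, and since left multiplication by $j(g)$ is $A_{G,\Lambda}$-linear on the right, it suffices to have verified the identity on the generators $\chi_\mu$ themselves — which is what the computation above does — so the proof is complete. The only mildly delicate point, and the one I'd expect to be the main obstacle, is bookkeeping the corner identities: keeping straight which projection $j(\delta_v)$ sits on which side of $\chi_\mu$ and of $j(g|_\mu)$, and invoking $g|_\mu\cdot s(\mu)=g\cdot s(\mu)$ at the right moment so that the source/range vertices of $g\cdot\mu$ match up. None of this is deep; it is purely a matter of carefully applying the definitions of $\chi_\mu$, $U_{G,\Lambda,p}$, and the covariance relation in the crossed product.
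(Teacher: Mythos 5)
Your proof is correct and follows exactly the route the paper intends: the paper's own proof is the one-liner ``it follows directly from the definition of $U_{G,\Lambda,p}$,'' and your argument is simply that definition unwound carefully, applying $U_{G,\Lambda,p}(g)$ to $\chi_\mu=\chi_\mu\, j(\delta_{s(\mu)})$ and using the covariance relation together with $g|_\mu\cdot s(\mu)=s(g\cdot\mu)$ to absorb the corner projections. The extra bookkeeping you supply is accurate and fills in what the paper leaves implicit.
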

\begin{proof}
It follows directly from the definition of $U_{G,\Lambda,p}$.
\end{proof}

Define
\[
X_{G,\Lambda}:=\amalg_{p \in \mathbb{N}^k}X_{G,\Lambda,p}.
\]
For $p \in \mathbb{N}^k$, the multiplications $X_{G,\Lambda,0} \cdot X_{G,\Lambda,p}$ and $X_{G,\Lambda,p} \cdot X_{G,\Lambda,0}$ are defined to be the left and right actions of $A_{G,\Lambda}$ on $X_{G,\Lambda,p}$,
respectively.
For nonzero $p, q \in \mathbb{N}^k$, for $\mu \in \Lambda^p, \nu \in \Lambda^q$, and for $g,h \in G$, the multiplication is given as follows:
\begin{align*}
(\chi_\mu j(g)) \cdot (\chi_\nu j(h))
 := \begin{cases}
    \chi_{\mu (g \cdot \nu)} j(g|_\nu h) &\text{ if } s(\mu)=r(g \cdot \nu), \\
    0 &\text{ otherwise }.
\end{cases}
\end{align*}
Under the above multiplication, one can check that $X_{G,\Lambda}$ is a product system over $\mathbb{N}^k$ with coefficient $A_{G,\Lambda}$.

Let $j_{cnp}:X_{G,\Lambda} \to \mathcal{NO}_{X_{G,\Lambda}}$ be the universal CNP
covariant representation of $X_{G,\Lambda}$. It follows easily that $\{j_{cnp}(j(\delta_v)),j_{cnp}(j(g)),j_{cnp}(\chi_\mu):v \in \Lambda^0,g \in G,\mu \in \Lambda \setminus \Lambda^0\}$ generates $\mathcal{NO}_{X_{G,\Lambda}}$.

\begin{prop}\label{P:NOO}
Keep the above notation. Then
there exists a surjective homomorphism $h$ from $\mathcal{NO}_{X_{G,\Lambda}}$ onto $\mathcal{O}_{G,\Lambda}$.
\end{prop}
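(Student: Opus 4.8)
The plan is to invoke the universal property of $\mathcal{O}_{G,\Lambda}$ in the natural way: construct a unitary representation $\{U_g\}_{g\in G}$ of $G$ and a Cuntz--Krieger $\Lambda$-family $\{S_\mu\}_{\mu\in\Lambda}$ inside $\mathcal{NO}_{X_{G,\Lambda}}$, verify the compatibility relations (i)--(ii) of Definition~\ref{define O_G,Lambda}, and then let $h$ be the induced homomorphism; surjectivity will follow because the images of the $U_g$ and $S_\mu$ exhaust a generating set of $\mathcal{NO}_{X_{G,\Lambda}}$. The concrete recipe for the generators is: set $U_g := j_{cnp,0}(j(g))$, set $S_v := j_{cnp,0}(j(\delta_v))$ for $v\in\Lambda^0$, and set $S_\mu := j_{cnp,d(\mu)}(\chi_\mu)$ for $\mu\in\Lambda\setminus\Lambda^0$.

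Next I would check each relation in turn. That $\{U_g\}$ is a family of unitaries with $U_{gh}=U_gU_h$ is immediate since $j_{cnp,0}$ is a $*$-homomorphism on $A_{G,\Lambda}$ and $g\mapsto j(g)$ is a unitary representation of $G$ in $A_{G,\Lambda}$. For the Cuntz--Krieger relations: orthogonality of the $S_v$ and $S_{\mu\nu}=S_\mu S_\nu$ follow from $j_{cnp}$ being a representation of the product system (using $\chi_{\mu\nu}=\chi_\mu\cdot\chi_\nu$ when $s(\mu)=r(\nu)$, a special case of the product formula with $g=h=1_G$), and $S_\mu^*S_\mu = S_{s(\mu)}$ follows from $\psi_n(x)^*\psi_n(y)=\psi_0(\langle x,y\rangle_A)$ together with $\langle\chi_\mu,\chi_\mu\rangle = j(\delta_{s(\mu)})$. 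The summation relation $S_v = \sum_{\mu\in v\Lambda^n}S_\mu S_\mu^*$ is exactly the place where CNP-covariance (specifically CP-covariance, which by Proposition~\ref{Fowler CP implies CNP} suffices, or directly the CNP condition) enters: one has $\phi_n(j(\delta_v)) = \sum_{\mu\in v\Lambda^n}\Theta_{\chi_\mu,\chi_\mu}$ in $\mathcal{K}(X_{G,\Lambda,n})$, and applying $j_{cnp,n}^{(1)}$ and the covariance identity $j_{cnp,0}(a)=j_{cnp,n}^{(1)}(\phi_n(a))$ yields the Cuntz--Krieger relation. Finally, the compatibility relation $u_g s_\mu = s_{g\cdot\mu} u_{g|_\mu}$ is precisely the content of Lemma~\ref{L:self-similar}: applying $j_{cnp}$ to the product-system identity $j(g)\cdot\chi_\mu = \chi_{g\cdot\mu}j(g|_\mu)$ and using that $j_{cnp}$ turns the left action into $j_{cnp,0}(a)\psi_n(x)$ gives $U_g S_\mu = S_{g\cdot\mu}U_{g|_\mu}$ for $\mu\notin\Lambda^0$; the case $\mu=v\in\Lambda^0$ is the covariance of the crossed-product generators, $j(g)j(\delta_v)=j(\delta_{g\cdot v})j(g)$.

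With all the relations verified, the universal property of $\mathcal{O}_{G,\Lambda}$ produces a homomorphism $h:\mathcal{O}_{G,\Lambda}\to\mathcal{NO}_{X_{G,\Lambda}}$ — wait, here I must be careful about direction. The statement asks for a surjection $\mathcal{NO}_{X_{G,\Lambda}}\twoheadrightarrow\mathcal{O}_{G,\Lambda}$, so instead I should run the argument the other way: realize a CNP-covariant representation of $X_{G,\Lambda}$ inside $\mathcal{O}_{G,\Lambda}$ and invoke the universal property of $\mathcal{NO}_{X_{G,\Lambda}}$. Concretely, define $\psi_0:A_{G,\Lambda}\to\mathcal{O}_{G,\Lambda}$ on generators by $j(\delta_v)\mapsto s_v$, $j(g)\mapsto u_g$ (this is a well-defined $*$-homomorphism on the crossed product because $\{u_g\}$ and $\{s_v\}$ satisfy the covariance $u_g s_v u_g^* = s_{g\cdot v}$, which follows from Definition~\ref{define O_G,Lambda}(ii) with $\mu=v$ since $g|_v=g$), and define $\psi_{d(\mu)}(\chi_\mu):=s_\mu$ for $\mu\notin\Lambda^0$, extended $A_{G,\Lambda}$-linearly. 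One then checks that $\psi$ is a representation of the product system (the multiplicativity $\psi_p(x)\psi_q(y)=\psi_{p+q}(xy)$ reduces, via $\psi(\chi_\mu j(g))\psi(\chi_\nu j(h)) = s_\mu u_g s_\nu u_h$ and relation (ii), to $s_\mu s_{g\cdot\nu}u_{g|_\nu}u_h = s_{\mu(g\cdot\nu)}u_{g|_\nu h}$, matching the product-system formula), and that it is CNP-covariant; by Proposition~\ref{Fowler CP implies CNP} it suffices to verify CP-covariance, i.e.\ $\psi_0(a)=\psi_p^{(1)}(\phi_p(a))$ for all $p$, which on the generator $a=j(\delta_v)$ is exactly the Cuntz--Krieger relation $s_v=\sum_{\mu\in v\Lambda^p}s_\mu s_\mu^*$ holding in $\mathcal{O}_{G,\Lambda}$ by Definition~\ref{define O_G,Lambda}, and on $a=j(g)$ follows by multiplying that identity by $u_g$ and using relation (ii). The universal property of $\mathcal{NO}_{X_{G,\Lambda}}$ then gives a homomorphism $h:\mathcal{NO}_{X_{G,\Lambda}}\to\mathcal{O}_{G,\Lambda}$ with $h(j_{cnp}(\cdot))=\psi(\cdot)$; surjectivity is clear since $\{s_\mu u_g s_\nu^*\}$ spans a dense subalgebra by Proposition~\ref{generator of O_G, Lambda} and each such element is $\psi(\chi_\mu j(g)\chi_\nu^*) = h(j_{cnp}(\chi_\mu j(g)\chi_\nu^*))$, or more simply since $s_v, u_g, s_\mu$ all lie in the range of $h$. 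The main obstacle I expect is the bookkeeping needed to confirm that $\psi_0$ is a genuine $*$-homomorphism on the full crossed product $A_{G,\Lambda}=C(\Lambda^0)\rtimes_\varphi G$ (one needs the covariant-pair universal property of the crossed product, checking the covariance relation carefully) and the routine but slightly fiddly verification that the partially-defined multiplication on $X_{G,\Lambda}$ is respected by $\psi$ in every case split ($s(\mu)=r(g\cdot\nu)$ or not); the CP-covariance check itself is short once one knows $\phi_p(j(\delta_v))=\sum_{\mu\in v\Lambda^p}\Theta_{\chi_\mu,\chi_\mu}$.
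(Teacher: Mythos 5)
Your (self-corrected) argument is exactly the paper's proof: build the representation $\psi$ of $X_{G,\Lambda}$ into $\mathcal{O}_{G,\Lambda}$ sending $j(\delta_v)\mapsto s_v$, $j(g)\mapsto u_g$, $\chi_\mu\mapsto s_\mu$, verify CP covariance via $\phi_p(a)=\phi_p(a)\sum_{\mu\in\Lambda^p}\Theta_{\chi_\mu,\chi_\mu}$ and the Cuntz--Krieger relation, upgrade to CNP covariance by Proposition~\ref{Fowler CP implies CNP}, and let $h$ be the induced map from $\mathcal{NO}_{X_{G,\Lambda}}$, which is surjective since the generators of $\mathcal{O}_{G,\Lambda}$ lie in its range. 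The initial wrong-direction detour is harmless since you caught and reversed it.
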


\begin{proof}
By the universal property of $A_{G,\Lambda}$, there exists a homomorphism $\psi_0:A_{G,\Lambda} \to \mathcal{O}_{G,\Lambda}$ such that $\psi_0(j(\delta_v))=s_v, \psi_0(j(g))=u_g$ for all $v \in \Lambda^0$ and $g \in G$. For $0 \neq p \in \mathbb{N}^k$, define a map $\psi_p:X_{G,\Lambda,p} \to \mathcal{O}_{G,\Lambda}$ by
\[
\psi_p\left(\sum_{\mu \in \Lambda^p} \chi_\mu a_\mu\right):=\sum_{\mu\in\Lambda^p} s_\mu \psi_0(a_\mu).
\]
By piecing $\{\psi_p\}_{p \in \mathbb{N}^k}$ together we obtain a representation $\psi$ of $X_{G,\Lambda}$ into $\mathcal{O}_{G,\Lambda}$.

For $a \in A_{G,\Lambda}$, we compute that
\begin{align*}
\psi_p^{(1)}(\phi_{G,\Lambda,p}(a))
&=\psi_p^{(1)}\left(\phi_{G,\Lambda,p}(a)\sum_{\mu\in\Lambda^p}\Theta_{\chi_\mu,\chi_\mu}\right)\\
&=\psi_p^{(1)}\left(\sum_{\mu\in\Lambda^p}\Theta_{\phi_{G,\Lambda,p}(a)\chi_\mu,\chi_\mu}\right)\\
&=\sum_{\mu\in\Lambda^p}\psi_p(\phi_{G,\Lambda,p}(a)\chi_\mu)\psi_{p}(\chi_\mu)^*\\
&=\sum_{\mu\in\Lambda^p}\psi_{0}(a)s_\mu s_\mu^*\\
&=\psi_0(a).
\end{align*}
So $\psi$ is CP covariant. By Proposition~\ref{Fowler CP implies CNP}, $\psi$ is CNP covariant. 
Hence there exists a unique homomorphism $h: \mathcal{NO}_{X_{G,\Lambda}} \to \mathcal{O}_{G,\Lambda}$ such that $h \circ j_{cnp}=\psi$. Clearly $h$ is also surjective.
\end{proof}

The above result will be strengthened when the action is required to be pseudo free (see Definition \ref{D:pf} and Theorem \ref{T:ONC}).
However, without pseudo freeness, it turns out that $\mathcal{O}_{G,\Lambda}$ is isomorphic to the Cuntz-Pimsner algebra $\mathcal{O}_{X_{G,\Lambda}}$ of $X_{G,\Lambda}$.

\begin{thm}
\label{T:OO}
Keep the above notation. Then
$\mathcal{O}_{X_{G,\Lambda}}\cong\mathcal{O}_{G,\Lambda}$.
\end{thm}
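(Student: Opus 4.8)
The plan is to invoke the gauge-invariant uniqueness theorem for Cuntz-Pimsner algebras together with the surjection already produced in Proposition~\ref{P:NOO}, after arranging things so that the target map lands on $\O_{X_{G,\Lambda}}$ rather than $\N\O_{X_{G,\Lambda}}$. First I would recall that there is a natural surjection $q:\N\O_{X_{G,\Lambda}}\twoheadrightarrow\O_{X_{G,\Lambda}}$ (since the universal CNP-covariant representation restricts, via Proposition~\ref{Fowler CP implies CNP}, to a CP-covariant representation; conversely the universal CP-covariant representation $j_{cp}$ of $X_{G,\Lambda}$ is itself CNP covariant by Proposition~\ref{Fowler CP implies CNP}, so it factors through $\N\O_{X_{G,\Lambda}}$). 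The key point is to check that the homomorphism $h:\N\O_{X_{G,\Lambda}}\to\O_{G,\Lambda}$ of Proposition~\ref{P:NOO} actually factors through $q$, i.e.\ that $\ker q\subseteq\ker h$; equivalently, that the representation $\psi$ built in that proof \emph{is} a CP-covariant representation (which the proof of Proposition~\ref{P:NOO} already verifies directly, \emph{before} invoking Proposition~\ref{Fowler CP implies CNP}!). Hence there is a homomorphism $\bar h:\O_{X_{G,\Lambda}}\to\O_{G,\Lambda}$ with $\bar h\circ j_{cp}=\psi$, and it is surjective since its image contains all the $s_\mu=\psi(\chi_\mu)$ (for $|\mu|\ge 1$), all $s_v=\psi_0(j(\delta_v))$, and all $u_g=\psi_0(j(g))$, which generate $\O_{G,\Lambda}$ by Proposition~\ref{generator of O_G,\Lambda}.

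Next I would construct an inverse homomorphism $\O_{G,\Lambda}\to\O_{X_{G,\Lambda}}$ using the universal property of $\O_{G,\Lambda}$. Inside $\O_{X_{G,\Lambda}}$, set $S_\mu:=j_{cp}(\chi_\mu)$ for $\mu\in\Lambda\setminus\Lambda^0$, $S_v:=j_{cp,0}(j(\delta_v))$ for $v\in\Lambda^0$, and $U_g:=j_{cp,0}(j(g))$ for $g\in G$. One then checks: $\{S_v\}$ are mutually orthogonal projections summing to $j_{cp,0}(1_{A_{G,\Lambda}})$ (using that $\Lambda^0$ is finite so $1_{A_{G,\Lambda}}=\sum_v j(\delta_v)$); $S_{\mu\nu}=S_\mu S_\nu$ from the product-system multiplication $\chi_\mu\cdot\chi_\nu=\chi_{\mu\nu}$ (with $g=h=1_G$); $S_\mu^*S_\mu=S_{s(\mu)}$ from $\langle\chi_\mu,\chi_\mu\rangle_{A_{G,\Lambda}}=j(\delta_{s(\mu)})$; the Cuntz-Krieger relation $S_v=\sum_{\mu\in v\Lambda^n}S_\mu S_\mu^*$ from CP-covariance applied to $\phi_{G,\Lambda,n}(j(\delta_v))=\sum_{\mu\in v\Lambda^n}\Theta_{\chi_\mu,\chi_\mu}$; $U_{gh}=U_gU_h$ and $U_g$ unitary from the covariance of $(j(\delta_v),j(g))$ inside $A_{G,\Lambda}$; and finally the crucial compatibility $U_g S_\mu=S_{g\cdot\mu}U_{g|_\mu}$, which for $|\mu|\ge 1$ is exactly Lemma~\ref{L:self-similar} ($j(g)\cdot\chi_\mu=\chi_{g\cdot\mu}j(g|_\mu)$) pushed through $j_{cp}$, and for $\mu=v\in\Lambda^0$ is the covariance relation $j(g)j(\delta_v)=j(\delta_{g\cdot v})j(g)$ together with $g|_v=g$. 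By the universal property of $\O_{G,\Lambda}$ (Definition~\ref{define O_G,Lambda}), there is a homomorphism $\pi:\O_{G,\Lambda}\to\O_{X_{G,\Lambda}}$ with $\pi(s_\mu)=S_\mu$ and $\pi(u_g)=U_g$.

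Finally I would check $\bar h$ and $\pi$ are mutually inverse by evaluating on generators: $\bar h\circ\pi$ fixes each $s_\mu$ and each $u_g$, hence is the identity; $\pi\circ\bar h$ fixes $j_{cp,0}(j(\delta_v))$, $j_{cp,0}(j(g))$, and $j_{cp}(\chi_\mu)$, and since $A_{G,\Lambda}$ is generated by $\{j(\delta_v),j(g)\}$ and $\O_{X_{G,\Lambda}}$ is generated by $j_{cp,0}(A_{G,\Lambda})$ together with $\{j_{cp}(\chi_\mu):\mu\in\Lambda\setminus\Lambda^0\}$, it is the identity. This yields $\O_{X_{G,\Lambda}}\cong\O_{G,\Lambda}$.

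I expect the main subtlety to be purely bookkeeping rather than conceptual: verifying that every Cuntz-Krieger relation and the self-similarity relation survive the passage to $\O_{X_{G,\Lambda}}$, in particular handling the degenerate cases $p=0$ (where $X_{G,\Lambda,0}=A_{G,\Lambda}$ and the ``left action'' is just multiplication) uniformly with $p\ne 0$, and making sure the finiteness of $\Lambda^0$ is used correctly so that $\sum_{v}S_v$ is the unit. No genuinely hard analytic input is needed here—unlike the later Theorem~\ref{T:ONC}, the present statement does \emph{not} require the deep Carlsen–Larsen–Sims–Vittadello uniqueness theorem, only the (comparatively soft) universal properties of $\O_{X_{G,\Lambda}}$ and $\O_{G,\Lambda}$.
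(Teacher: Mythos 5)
Your proposal is correct and follows essentially the same route as the paper: use the CP-covariant representation $\psi$ from Proposition~\ref{P:NOO} and the universal property of $\O_{X_{G,\Lambda}}$ to get a surjection $\O_{X_{G,\Lambda}}\to\O_{G,\Lambda}$, then define $S_\mu:=j_{cp}(\chi_\mu)$ and $U_g:=j_{cp,0}(j(g))$ inside $\O_{X_{G,\Lambda}}$, verify the Cuntz--Krieger and self-similarity relations, and check the two maps are mutually inverse on generators. (Your opening mention of the gauge-invariant uniqueness theorem is a red herring -- your actual argument, like the paper's, never needs it.)
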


\begin{proof}
Let $\psi$ be the CP covariant representation of $X_{G,\Lambda}$ constructed in Proposition~\ref{P:NOO}.
Then there exists a unique homomorphism $h':\mathcal{O}_{X_{G,\Lambda}} \to \mathcal{O}_{G,\Lambda}$ such that $h' \circ j_{cp}=\psi$.

Conversely, define
\begin{align*}
S_\mu&:=
j_{G,\Lambda,d(\mu)}(\chi_\mu) \text{ for all }\mu\in\Lambda,
\\
 U_g&:=j_{G,\Lambda,0}(j(g))\text{ for all }g\in G.
\end{align*}
One can verify that $\{S_\mu\}_{\mu \in \Lambda}$ is a Cuntz-Krieger family, that $\{U_g\}_{g \in G}$ is a family of unitaries, and that they satisfy
Definition~\ref{define O_G,Lambda} (i)-(ii). By the universal property of $\mathcal{O}_{G,\Lambda}$, there exists a unique homomorphism $\rho:\mathcal{O}_{G,\Lambda} \to \mathcal{O}_{X_{G,\Lambda}}$ such that $\rho(s_\mu)=S_\mu,\rho(u_g)=U_g$ for all $\mu \in \Lambda,g \in G$. It is straightforward to see that $h' \circ \rho=\id, \rho \circ h'=\id$. Therefore $\mathcal{O}_{G,\Lambda}$ and $\mathcal{O}_{X_{G,\Lambda}}$ are isomorphic.
\end{proof}

\section{Realizing $\O_{G,\Lambda}$ as a Groupoid C*-algebra}

\label{S:OG}

Let $\Lambda$ be a self-similar $k$-graph over a group $G$.  In this section, we first associate it a ``path-like" groupoid $\G_{G,\Lambda}$. To make $\G_{G,\Lambda}$
a (locally compact) ample groupoid, we have to require that the self-similar action be pseudo free (see Definition~\ref{D:pf}).
Our main result here is that  $\O_{G,\Lambda}\cong \O_{X_{G,\Lambda}}\cong \N\O_{X_{G,\Lambda}}\cong\ca(\G_{G,\Lambda})$ if $\vert\Lambda^0\vert<\infty$ and $G$ is amenable.
This will play a very important role in next section.

\smallskip

Let $G$ be a group. Denote by $C(\mathbb{N}^k,G)$ the set of all maps from $\mathbb{N}^k$ to $G$, which is a group under the pointwise multiplication. For $f,g \in C(\mathbb{N}^k,G)$, define $f \sim g$ if there exists $z \in \mathbb{Z}^k$ such that $f(p)=g(p)$ for all $p \geq z$. Then $\sim$ is an equivalence relation on $C(\mathbb{N}^k,G)$. Denote by $Q(\bN^k,G)$ the quotient group $C(\mathbb{N}^k,G)/\sim$. As usual, if $f\in C(\mathbb{N}^k,G)$, then we write $[f] \in Q(\mathbb{N}^k,G)$.

For $z \in \mathbb{Z}^k$, $f \in C(\mathbb{N}^k,G)$ and $p \in \mathbb{N}^k$, define a translation
\begin{align*}
\T_z(f)(p)&:= \begin{cases}
    f(p-z) &\text{if $p-z \in \mathbb{N}^k$} \\
    1_G &\text{if $p-z \notin \mathbb{N}^k$}.
\end{cases}
\end{align*}
Then $\T_z$ yields an automorphism, still denoted by $\T_z$, on $Q(\bN^k,G)$.
Moreover, $\T:\mathbb{Z}^k \to \Aut Q(\bN^k,G),\ z\mapsto \T_z$, is a homomorphism.
So one can form the semidirect product $Q(\bN^k,G) \rtimes_\T \mathbb{Z}^k$.

\subsection{Self-similar path groupoids $\G_{G,\Lambda}$}

Let $\Lambda$ be a self-similar $k$-graph over a group $G$. For $g \in G$ and $x \in \Lambda^\infty$, recall from Section \ref{S:SS}
that  $g\cdot x\in \Lambda^\infty$ and $g \vert_x\in C(\mathbb{N}^k,G)$.
The following identity will be frequently used later.
%
%
%
\begin{lem}
\label{L:translation}
Let $\Lambda$ be a self-similar $k$-graph over a group $G$. Then
\[
[g|_{\mu x}]=\T_{d(\mu)}([g|_\mu|_x])\qforal g\in G,\ \mu \in \Lambda, \ x\in s(\mu)\Lambda^\infty.
\]
\end{lem}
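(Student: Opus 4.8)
The plan is to unwind both sides of the claimed identity by evaluating the representatives at an arbitrary point $p \in \mathbb{N}^k$ and comparing, using only the definitions of $g|_x$, of the translation $\T_z$, and the self-similarity axioms from Definition~\ref{D:sskg}. Recall that $[f]=[f']$ in $Q(\mathbb{N}^k,G)$ precisely when $f(p)=f'(p)$ for all sufficiently large $p$, so it suffices to show the two functions $p \mapsto g|_{\mu x}(p) = g|_{(\mu x)(0,p)}$ and $p \mapsto \T_{d(\mu)}(g|_\mu|_x)(p)$ agree once $p \geq d(\mu)$.

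First I would fix $g \in G$, $\mu \in \Lambda$, and $x \in s(\mu)\Lambda^\infty$, and take $p \in \mathbb{N}^k$ with $p \geq d(\mu)$. On the right-hand side, by the definition of $\T_{d(\mu)}$ we have $\T_{d(\mu)}(g|_\mu|_x)(p) = (g|_\mu|_x)(p - d(\mu)) = g|_\mu|_{x(0,\,p-d(\mu))}$, using the definition of the function $h|_x$ applied to $h = g|_\mu$. On the left-hand side, $g|_{\mu x}(p) = g|_{(\mu x)(0,p)}$. Now the key structural point is the factorization $(\mu x)(0,p) = \mu \cdot x(0, p - d(\mu))$, which holds because $p \geq d(\mu)$ and $\mu x$ restricted to $[0,p]$ begins with $\mu$ followed by the first $p - d(\mu)$ edges of $x$ (this is essentially the factorization property of the $k$-graph together with the definition of the concatenation $\mu x$). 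Then axiom~(iii) of Definition~\ref{D:sskg}, namely $g|_{\alpha\beta} = g|_\alpha|_\beta$, applied with $\alpha = \mu$ and $\beta = x(0, p - d(\mu))$, gives $g|_{\mu \cdot x(0,p-d(\mu))} = g|_\mu|_{x(0,p-d(\mu))}$. Comparing the two computations shows the functions agree for all $p \geq d(\mu)$, hence their classes in $Q(\mathbb{N}^k,G)$ coincide.

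I expect this proof to be essentially a bookkeeping exercise, so there is no serious obstacle; the only point requiring a little care is justifying the identity $(\mu x)(0,p) = \mu\cdot x(0,p-d(\mu))$ at the level of paths, i.e.\ making sure the restriction of the infinite path $\mu x$ to $[0,p]$ really factors through $\mu$ when $p \geq d(\mu)$. This follows from the way $\mu x$ is defined as a graph morphism $\Omega_k \to \Lambda$ and the uniqueness in the factorization property, but it is worth stating explicitly. Everything else is a direct application of the definitions, so I would present the argument compactly: choose $p \geq d(\mu)$, rewrite both sides, invoke Definition~\ref{D:sskg}(iii), and conclude.

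\begin{proof}
Fix $g \in G$, $\mu \in \Lambda$, and $x \in s(\mu)\Lambda^\infty$. Two elements $[f],[f']$ of $Q(\mathbb{N}^k,G)$ are equal if and only if $f(p)=f'(p)$ for all sufficiently large $p$, so it suffices to show that
\[
g|_{(\mu x)(0,p)} = \T_{d(\mu)}(g|_\mu|_x)(p) \qforal p \geq d(\mu).
\]
Let $p \geq d(\mu)$. Since $\mu x$ is the graph morphism $\Omega_k \to \Lambda$ obtained by concatenating $\mu$ with $x$, the factorization property yields $(\mu x)(0,p) = \mu\cdot x(0,p-d(\mu))$. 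Hence, using Definition~\ref{D:sskg}(iii),
\[
g|_{(\mu x)(0,p)} = g|_{\mu\cdot x(0,p-d(\mu))} = g|_\mu|_{x(0,p-d(\mu))} = (g|_\mu|_x)(p-d(\mu)).
\]
On the other hand, by the definition of $\T_{d(\mu)}$ and the fact that $p - d(\mu) \in \mathbb{N}^k$,
\[
\T_{d(\mu)}(g|_\mu|_x)(p) = (g|_\mu|_x)(p-d(\mu)).
\]
Comparing the two displays shows the required equality for all $p \geq d(\mu)$, so $[g|_{\mu x}] = \T_{d(\mu)}([g|_\mu|_x])$.
\end{proof}
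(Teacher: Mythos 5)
Your proof is correct and follows exactly the same route as the paper's (which simply states $g|_{\mu x}(p)=(g|_\mu)|_x(p-d(\mu))=\T_{d(\mu)}((g|_\mu)|_x)(p)$ for $p\ge d(\mu)$); you have merely spelled out the intermediate steps, namely the factorization $(\mu x)(0,p)=\mu\, x(0,p-d(\mu))$ and the appeal to Definition~\ref{D:sskg}(iii). No issues.
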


\begin{proof}
 For $p\in \bN^k$ big enough (say $p\ge d(\mu)$), one has $g|_{\mu x}(p)=(g|_\mu)|_x(p-d(\mu))=\T_{d(\mu)}((g|_\mu)|_x)(p)$, as desired.
\end{proof}

\begin{defn}
Let $\Lambda$ be a self-similar $k$-graph over a group $G$. The \emph{self-similar path groupoid}, which is a subgroupoid of $\Lambda^\infty \times (Q(\bN^k,G)\rtimes_\T \mathbb{Z}^k ) \times \Lambda^\infty$, is defined to be
\[
\G_{G,\Lambda}:=\left\{(x;[f],p-q;y):
\begin{matrix} \sigma^p(x)=f(p) \cdot \sigma^q(y),\\
f(p+n)=f(p)\vert_{\sigma^q(y)}(n)\ \forall n \in \mathbb{N}^k
\end{matrix}
\right\}.
\]
\end{defn}
One can check that
\[
\G_{G,\Lambda}:=\left\{\big(\mu (g \cdot x);\T_{d(\mu)}([g \vert_{x}]),d(\mu)-d(\nu);\nu x\big):
\begin{matrix}
g \in G, \mu,\nu \in \Lambda ,\\
\text{ with } s(\mu)=g \cdot s(\nu)
\end{matrix}
\right\}.
\]
This description will  actually be very useful later.

For $g \in G, \mu,\nu \in \Lambda$ with $s(\mu)=g \cdot s(\nu)$, define
\[
Z(\mu,g,\nu):=\big\{(\mu (g \cdot x);\T_{d(\mu)}([g \vert_{x}]),d(\mu)-d(\nu);\nu x):x \in s(\nu)\Lambda^\infty\big\}.
\]
Endow $\mathcal{G}_{G,\Lambda}$ with the topology generated by the basic open sets
\[
\mathcal{B}_{G,\Lambda}:=\big\{Z(\mu,g,\nu):g \in G, \mu,\nu \in \Lambda,s(\mu)=g \cdot s(\nu)\big\}.
\]

In order to make $\G_{G,\Lambda}$ a nice locally compact groupoid, we have to introduce the following condition.

\begin{defn}
\label{D:pf}
Let $\Lambda$ be a self-similar $k$-graph over a group $G$.
Then the self-similar action is said to be \emph{pseudo free} if for any $g\in G,\mu\in\Lambda$, we have $g \cdot \mu=\mu$ and $g \vert_\mu=1_G \implies g=1_G$.
In this case, $\Lambda$ is called a \textit{pseudo free self-similar $k$-graph over $G$}.
\end{defn}

\begin{lem}
Let $\Lambda$ be a self-similar $k$-graph over a group $G$. Suppose that for any $g\in G$ and $\mu\in\Lambda^\Be$, we have $g \cdot \mu=\mu$ and
$g \vert_\mu=1_G \implies g=1_G$. Then the action is pseudo free.
\end{lem}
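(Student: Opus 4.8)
The plan is to reduce the pseudo freeness condition on all of $\Lambda$ to the condition on $\Lambda^{\Be}=\bigcup_{i=1}^k\Lambda^{e_i}$ by an induction on $|\mu|$, using the factorization property of the $k$-graph together with the cocycle identities in Definition~\ref{D:sskg}. So suppose $g\in G$ and $\mu\in\Lambda$ satisfy $g\cdot\mu=\mu$ and $g|_\mu=1_G$; I want to conclude $g=1_G$.

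\textbf{Key steps.} First I would handle the base cases $|\mu|=0$ and $|\mu|=1$. If $\mu\in\Lambda^0$, then $g|_\mu=g$ by Definition~\ref{D:sskg}(2), so $g|_\mu=1_G$ already forces $g=1_G$. If $\mu\in\Lambda^{\Be}$, then $g=1_G$ is exactly the hypothesis of the lemma. For the inductive step, suppose $|\mu|\ge 2$ and factor $\mu=\alpha\beta$ with $\alpha\in\Lambda^{\Be}$ and $|\beta|=|\mu|-1$ (possible since $d(\mu)\ne 0$, pick any coordinate $i$ with $d(\mu)_i>0$ and split off an edge $\alpha\in\Lambda^{e_i}$). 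Then $\mu=g\cdot\mu=g\cdot(\alpha\beta)=(g\cdot\alpha)(g|_\alpha\cdot\beta)$ by Definition~\ref{D:sskg}(1). By uniqueness in the factorization property (both sides have the same degrees of the two factors), $g\cdot\alpha=\alpha$ and $g|_\alpha\cdot\beta=\beta$. Also $g|_\mu=g|_\alpha|_\beta$ by Definition~\ref{D:sskg}(3), so $g|_\alpha|_\beta=1_G$. Now apply the inductive hypothesis to the element $g|_\alpha$ and the path $\beta$: since $g|_\alpha\cdot\beta=\beta$ and $(g|_\alpha)|_\beta=1_G$, we get $g|_\alpha=1_G$. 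Feeding this back: $g\cdot\alpha=\alpha$ and $g|_\alpha=1_G$ with $\alpha\in\Lambda^{\Be}$, so the hypothesis of the lemma gives $g=1_G$, completing the induction.

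\textbf{Main obstacle.} There is no serious obstacle here; the only point requiring care is making sure the factorization $\mu=\alpha\beta$ is chosen so that $\alpha\in\Lambda^{\Be}$, which is why one splits off a single generator $e_i$ rather than an arbitrary $n\le d(\mu)$, and then invoking the uniqueness clause of the factorization property to split the fixed-point equation $g\cdot\mu=\mu$ coordinatewise into $g\cdot\alpha=\alpha$ and $g|_\alpha\cdot\beta=\beta$. One should also note that $g|_\alpha\cdot\beta$ indeed lies in $\Lambda$ with $d(g|_\alpha\cdot\beta)=d(\beta)$ by Definition~\ref{D:sskg}(1) applied via the automorphism property, so the degrees match and uniqueness applies. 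The argument is a clean finite induction on $|\mu|=\sum_i d(\mu)_i$.
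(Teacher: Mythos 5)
Your proof is correct and is exactly the argument the paper has in mind: the paper's own proof is just the one-line remark that the lemma follows by induction on $\vert\mu\vert$, and your induction (splitting off a single edge $\alpha\in\Lambda^{e_i}$, using the uniqueness clause of the factorization property together with Definition~\ref{D:sskg}(1),(3) to get $g\cdot\alpha=\alpha$, $g\vert_\alpha\cdot\beta=\beta$, $g\vert_\alpha\vert_\beta=1_G$, and then applying the inductive hypothesis to $g\vert_\alpha$) is the standard way to fill in that sketch. No issues.
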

\begin{proof}
It is straightforward to prove the lemma by the induction on $\vert\mu\vert$.
\end{proof}

\begin{lem}
\label{L:5.7}
Let $\Lambda$ be a pseudo free self-similar $k$-graph over a group $G$. Then for any $g,\tilde{g} \in G, \mu \in \Lambda$, we have $g\cdot \mu=\tilde g\cdot \mu$ and
$g|_\mu=\tilde g|_{\mu} \implies \tilde g=g$.
\end{lem}

\begin{proof}
This follows from
\begin{align*}
(\tilde g^{-1}g)|_\mu=\tilde g^{-1}|_{g\cdot \mu}g|_\mu=\tilde g^{-1}|_{\tilde g\cdot \mu}\tilde g|_\mu=(\tilde g^{-1}\tilde g)|_\mu=1_G|_\mu=1_G.
\end{align*}
We are done.
\end{proof}

The following corollary will be useful later.

\begin{cor}
\label{C:5.7}
Let $\Lambda$ be a pseudo free self-similar $k$-graph over a group $G$. Then for any $g,\tilde{g} \in G, x \in \Lambda^\infty$, we have
$g\cdot x=\tilde g\cdot x$ and $[g|_x]=[\tilde g|_x] \implies \tilde g=g$.
\end{cor}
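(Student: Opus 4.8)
The plan is to reduce the statement about infinite paths to the corresponding statement about finite paths, which is exactly Lemma~\ref{L:5.7}. Suppose $g\cdot x=\tilde g\cdot x$ and $[g|_x]=[\tilde g|_x]$. The second condition means that the maps $g|_x$ and $\tilde g|_x$ in $C(\mathbb{N}^k,G)$ agree eventually; that is, there exists $z\in\mathbb{Z}^k$ such that $g|_x(p)=\tilde g|_x(p)$ for all $p\ge z$ in $\mathbb{N}^k$. Recalling the definition $g|_x(p)=g|_{x(0,p)}$, this says $g|_{x(0,p)}=\tilde g|_{x(0,p)}$ for all sufficiently large $p$. First I would fix one such $p$ (e.g.\ any $p\ge z\vee 0$) and set $\mu:=x(0,p)\in\Lambda^p$.

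Next I would extract from $g\cdot x=\tilde g\cdot x$ the identity $g\cdot\mu=\tilde g\cdot\mu$. This follows since $g\cdot x$ and $\tilde g\cdot x$ agree on the initial segment $(0,p)$: indeed $(g\cdot x)(0,p)=g|_{x(0,0)}\cdot x(0,p)=g\cdot\mu$ using Definition~\ref{D:sskg}~(2) (restriction at a vertex is trivial), and similarly $(\tilde g\cdot x)(0,p)=\tilde g\cdot\mu$; since these paths coincide, $g\cdot\mu=\tilde g\cdot\mu$. Together with $g|_\mu=g|_{x(0,p)}=\tilde g|_{x(0,p)}=\tilde g|_\mu$, Lemma~\ref{L:5.7} applied to $\mu$ gives $\tilde g=g$, which is what we want.

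The only genuinely delicate point is being careful with the index $z\in\mathbb{Z}^k$: $z$ may have negative coordinates, so I should replace it by $p:=z\vee 0\in\mathbb{N}^k$ and note that every $q\ge p$ lies in $\mathbb{N}^k$ and satisfies $q\ge z$, so the eventual-equality hypothesis genuinely applies to finite paths of the form $x(0,q)$; picking any single such $q$ (say $q=p$) suffices. Everything else is a direct unwinding of definitions, so I do not anticipate any real obstacle — this corollary is a routine "pass to a long enough initial segment" argument built on the finite-path uniqueness of Lemma~\ref{L:5.7}.
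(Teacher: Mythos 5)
Your proposal is correct and follows exactly the paper's argument: pass to a sufficiently large initial segment $x(0,p)$, observe that the two hypotheses give $g\cdot x(0,p)=\tilde g\cdot x(0,p)$ and $g|_{x(0,p)}=\tilde g|_{x(0,p)}$, and invoke Lemma~\ref{L:5.7}. The extra care you take with the index $z$ and with the computation $(g\cdot x)(0,p)=g|_{x(0,0)}\cdot x(0,p)=g\cdot x(0,p)$ is sound and merely fills in details the paper leaves implicit.
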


\begin{proof}
Notice that, for $p\in\bN^k$ big enough, one has
\[
g|_{x(0,p)}=\tilde g|_{x(0,p)} \text{ and } g\cdot {x(0,p)}=\tilde g\cdot {x(0,p)}.
\]
Then Lemma \ref{L:5.7} applies.
\end{proof}

Using the terminology in \cite{EP17}, then the above corollary says that pseudo freeness implies the following property: every $1_G\ne g\in G$ has at most finitely many minimal strongly fixed paths.


\begin{thm}\label{psdo free implies Hau}
Let $\Lambda$ be a pseudo free self-similar $k$-graph over a group $G$. Then $\mathcal{G}_{G,\Lambda}$ is an ample groupoid, and $\mathcal{B}_{G,\Lambda}$ consists of compact open bisections.
\end{thm}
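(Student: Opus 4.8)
The plan is to verify the standard groupoid axioms for $\mathcal{G}_{G,\Lambda}$ together with the topological conditions, breaking the argument into the following steps. First I would check that $\mathcal{G}_{G,\Lambda}$ is indeed a groupoid: the composable pairs are those $\big((x;[f],p-q;y),(y';[f'],p'-q';z)\big)$ with $y=y'$, the product is obtained by composing the tail-equivalence data (using the cocycle-type identity $f(p+n)=f(p)|_{\sigma^q(y)}(n)$ to glue $[f]$ and $[f']$ to a well-defined element of $Q(\bN^k,G)\rtimes_\T\bZ^k$), and the inverse of $(x;[f],p-q;y)$ is built from $(y;\ldots;x)$; here Lemma~\ref{L:translation} and Corollary~\ref{C:5.7} control well-definedness of the second coordinate. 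The unit space is naturally identified with $\Lambda^\infty$ via $x\mapsto(x;[1_G],0;x)$, and the range and source maps are $(x;[f],m;y)\mapsto x$ and $\mapsto y$. Next I would verify that the alternative ``path-like" description of $\mathcal{G}_{G,\Lambda}$ in terms of triples $\big(\mu(g\cdot x);\T_{d(\mu)}([g|_x]),d(\mu)-d(\nu);\nu x\big)$ coincides with the original one; this is essentially the factorization property of $\Lambda$ applied to $\sigma^p(x)$ and $\sigma^q(y)$, plus Lemma~\ref{L:translation}.

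With the groupoid structure in hand, I would turn to the topology. I would show that $\mathcal{B}_{G,\Lambda}=\{Z(\mu,g,\nu)\}$ is closed under finite intersections (up to refinement), so that it really is a basis; the key computation is that $Z(\mu,g,\nu)\cap Z(\mu',g',\nu')$ decomposes as a union of sets of the same form, obtained by matching the ranges $\mu=\mu'\alpha$ or $\mu'=\mu\alpha$ and then using the self-similarity relations (Definition~\ref{D:sskg}) to rewrite $g\cdot(\alpha x)$ and $g|_{\alpha x}$. Then I would show each $Z(\mu,g,\nu)$ is a compact open bisection: the map $x\mapsto(\mu(g\cdot x);\ldots;\nu x)$ is a bijection from $Z(\nu)\subseteq\Lambda^\infty$ onto $Z(\mu,g,\nu)$, and $r$ restricted to $Z(\mu,g,\nu)$ is $x\mapsto\mu(g\cdot x)$, which is a homeomorphism onto $Z(\mu)$ by Lemma~\ref{L:5.2}(i) (composed with $x\mapsto g\cdot x$ on $s(\nu)\Lambda^\infty$ and the prepending homeomorphism $Z(\mu)\cong s(\mu)\Lambda^\infty$); similarly $s$ restricted to $Z(\mu,g,\nu)$ is $x\mapsto\nu x$, a homeomorphism onto $Z(\nu)$. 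Since each $Z(\mu)$ is compact, $Z(\mu,g,\nu)$ is compact; openness is immediate from it being a basic set. Hausdorffness and local compactness then follow: local compactness because every point lies in a compact open bisection, and Hausdorffness is where pseudo freeness enters — I would use Corollary~\ref{C:5.7} to separate two distinct points $(x;[f],m;y)$ and $(x;[f'],m';y)$ with the same range and source, since pseudo freeness forces the ``lag" data $([f],m)$ to be determined, so distinct points genuinely live in disjoint basic sets. Finally, étaleness follows from $r,s$ being local homeomorphisms (they are homeomorphisms on each basic bisection, and these cover $\mathcal{G}_{G,\Lambda}$), and ampleness follows since the basis consists of compact open bisections.

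The main obstacle I expect is the verification of Hausdorffness, or more precisely the claim that the second coordinate $([f],p-q)\in Q(\bN^k,G)\rtimes_\T\bZ^k$ is uniquely determined by $(x,p-q,y)$ under pseudo freeness — without this, the $Z(\mu,g,\nu)$ need not be bisections and the space need not be Hausdorff. This is exactly the content of Corollary~\ref{C:5.7}: if $(\mu(g\cdot x);\T_{d(\mu)}([g|_x]),d(\mu)-d(\nu);\nu x)=(\mu(\tilde g\cdot x);\T_{d(\mu)}([\tilde g|_x]),d(\mu)-d(\nu);\nu x)$ then $g\cdot x=\tilde g\cdot x$ and $[g|_x]=[\tilde g|_x]$, hence $g=\tilde g$; one also needs to argue that any apparent ambiguity coming from writing the same groupoid element with different $(\mu,\nu)$ (i.e.\ replacing $(\mu,\nu)$ by $(\mu\beta,\nu\beta')$) is harmless, which again reduces to the self-similarity identities and the factorization property. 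A secondary technical point is checking that the translation $\T_{d(\mu)}$ interacts correctly with composition so that the product of two basic bisections, computed via the triple description, again lands in $\mathcal{B}_{G,\Lambda}$; this is routine but requires care with the cocycle identity $(gh)|_\mu=g|_{h\cdot\mu}h|_\mu$ and Lemma~\ref{L:translation}. Once these well-definedness issues are settled, the remaining verifications are mechanical.
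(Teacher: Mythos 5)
Your proposal follows the same route as the paper's proof: the groupoid axioms, the compactness of each $Z(\mu,g,\nu)$, and ampleness are treated as routine, and the substantive work is (a) showing $\mathcal{B}_{G,\Lambda}$ is actually a basis and (b) Hausdorffness, both resting on Lemma~\ref{L:translation} and Corollary~\ref{C:5.7}. Two points to tighten. First, you locate pseudo freeness only in the Hausdorffness step, but it is equally essential for the basis property: given a point of $Z(\mu,g,\nu)\cap Z(\mu',g',\nu')$ lying over $y$, one obtains $g|_\alpha\cdot y=g'|_{\alpha'}\cdot y$ and, after cancelling the translations via Lemma~\ref{L:translation}, $[g|_\alpha|_y]=[g'|_{\alpha'}|_y]$; only Corollary~\ref{C:5.7} upgrades this to the identity $g|_\alpha=g'|_{\alpha'}$ of group elements, and without that the candidate neighbourhood $Z(\mu(g\cdot\alpha),g|_\alpha,\nu\alpha)$ need not be contained in $Z(\mu',g',\nu')$. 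Second, for $k>1$ the overlap of two basic sets is not governed by one of $\mu,\mu'$ extending the other but by pairs $(\alpha,\alpha')\in\Lambda^{\min}(\nu,\nu')$ with $(g\cdot\alpha,g'\cdot\alpha')\in\Lambda^{\min}(\mu,\mu')$ and $g|_\alpha=g'|_{\alpha'}$; your ``matching the ranges $\mu=\mu'\alpha$ or $\mu'=\mu\alpha$'' is the $1$-graph picture. Neither issue changes the strategy, and your identification of Corollary~\ref{C:5.7} as the crux is exactly right.
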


\begin{proof}
We shall only prove that $\mathcal{B}_{G,\Lambda}$ is a base of $\G_{G,\Lambda}$, and that
$\G_{G,\Lambda}$ is Hausdorff. The proofs of the other properties are easy and omitted.

To see that $\mathcal{B}_{G,\Lambda}$ is a base of $\G_{G,\Lambda}$, let
\[
(\mu g \cdot  x;\T_{d(\mu)}([g \vert_{x}]),d(\mu)-d(\nu);\nu x)\in Z(\mu,g,\nu) \cap Z(\mu',g',\nu').
\]
Then there is some $x'\in \Lambda^\infty$ such that
\begin{align*}
&(\mu g \cdot x;\T_{d(\mu)}([g \vert_{x}]),d(\mu)-d(\nu);\nu x)\\
=&
(\mu' g' \cdot  x';\T_{d(\mu')}([g' \vert_{x'}]),d(\mu')-d(\nu');\nu' x')\in Z(\mu',g',\nu').
\end{align*}
So from $\nu x=\nu'x'$ one has $x=\alpha y$ and $x'=\alpha' y$ with $y\in \Lambda^\infty$ and
$(\alpha, \alpha')\in \Lambda^{\min}(\nu,\nu')$.
Combining this with $\mu g \cdot x=\mu' g' \cdot  x'$ gives $\mu g\cdot (\alpha y)=\mu' g'\cdot (\alpha' y)$. Thus
$\mu g\cdot \alpha (g_{\alpha}\cdot y)=\mu' g'\cdot \alpha' (g|_{\alpha'}\cdot y)$.
In particular, $\mu g\cdot \alpha=\mu' g'\cdot \alpha'$. This implies $(g\cdot \alpha, g' \cdot \alpha') \in \Lambda^{\min} (\mu,\mu')$
and
\begin{align*}
g|_{\alpha}\cdot y=g|_{\alpha'}\cdot y.
\end{align*}
Also, it follows from $\T_{d(\mu)}([g \vert_{x}])=\T_{d(\mu')}([g' \vert_{x'}])$ that
$\T_{d(\mu)}([g \vert_{\alpha y}])=\T_{d(\mu')}([g' \vert_{\alpha' y}])$.
By Lemma \ref{L:translation}, one has that
\[
\T_{d(\mu)+d(\alpha)}([g|_{\alpha}|_y])=\T_{d(\mu')+d(\alpha')}([g'|_{\alpha'}|_y]).
\]
But $d(\mu)+d(\alpha)=d(\mu')+d(\alpha')$ as $\mu g\cdot (\alpha)=\mu' g'\cdot (\alpha') $
and $\T_{d(\mu)+d(\alpha)}$ is an automorphism on $Q(\bN^k, G)$. So
\[
[g|_{\alpha}|_y]=[g'|_{\alpha'}|_y].
\]
Since the action is pseudo free, we deduce that
\[
g \vert_{\alpha}=g' \vert_{\alpha'}
\]
by Corollary \ref{C:5.7}.
Thus
\[
Z(\mu,g,\nu) \cap Z(\mu',g',\nu')=\bigcup_{(\alpha,\alpha') \in F} Z(\mu g\cdot \alpha,g|_\alpha, \nu\alpha),
\]
where
\begin{align*}
F=\left\{(\alpha,\alpha') \in \Lambda^{\min}(\nu,\nu'):
\begin{matrix}
(g \cdot \alpha,g'  \cdot \alpha') \in \Lambda^{\min} (\mu,\mu'), \\
g \vert_{\alpha} =g' \vert_{\alpha'}
\end{matrix}
\right\}.
\end{align*}
Therefore, we have shown that $\mathcal{B}_{G,\Lambda}$ is a base of $\G_{G,\Lambda}$.

We now show that $\G_{G,\Lambda}$ is Hausdorff. For this, let us fix two distinct points
$u:=(\mu (g \cdot x);\T_{d(\mu)}([g \vert_{x}]),d(\mu)-d(\nu);\nu x)$ and $u':=(\mu' (g' \cdot  x');\T_{d(\mu')}([g' \vert_{x'}])$, $d(\mu')-d(\nu');\nu' x')$ in $\mathcal{G}_{G,\Lambda}$.

\textit{Case 1:} $\mu (g \cdot x) \neq \mu' (g' \cdot x')$. Then there is $p\ge d(\mu)\lor d(\mu')$ such that
$\mu (g \cdot x) (0,p)\neq \mu' (g' \cdot x') (0,p)$. So
\begin{align*}
&\mu g \cdot x(0,p-d(\mu)) g|_{x(0,p-d(\mu))}\cdot\sigma^{p-d(\mu)}(x) (0,p-d(\mu))\\
&=\mu (g \cdot (x(0,p-d(\mu))\sigma^{p-d(\mu)}(x)) (0,p)\\
&\neq\mu' (g' \cdot (x(0,p-d(\mu'))\sigma^{p-d(\mu')}(x')) (0,p)\\
&=\mu' g' \cdot x'(0,p-d(\mu')) g'|_{x'(0,p-d(\mu'))}\cdot\sigma^{p-d(\mu')}(x') (0,p-d(\mu')).
\end{align*}
Then $Z(\mu (g\cdot x(0,p-d(\mu)),g|_{x(0,p-d(\mu))},\nu x (0,p-d(\mu)))$
and $Z(\mu' (g'\cdot x'(0,p-d(\mu')),g|_{x'(0,p-d(\mu'))},\nu' x'(0,p-d(\mu')))$ separate $u$ and $u'$.

\textit{Case 2:} $d(\mu)-d(\nu) \neq d(\mu')-d(\nu')$. This is directly from the definition of $Z(\mu,g,\nu)$.

\textit{Case 3:} $\nu x \neq \nu' x'$. This is proved similar to Case 1.

\textit{Case 4:}  $\T_{d(\mu)}([g \vert_{x}]) \neq \T_{d(\mu')}([g' \vert_{x'}])$. From Case 1 - Case 3 above, we can assume that
$\mu (g \cdot x)=\mu' (g' \cdot x'), d(\mu)-d(\nu)=d(\mu')-d(\nu')$, and $\nu x= \nu' x'$. Then, as above,
there exist $\alpha \in s(\nu)\Lambda,\alpha' \in s(\nu')\Lambda,y \in s(\alpha)\Lambda^\infty$ such that $(\alpha,\alpha') \in \Lambda^{\min}(\nu,\nu'), (g  \cdot \alpha,g' \cdot \alpha') \in \Lambda^{\min} (\mu,\mu'),x=\alpha y,x'=\alpha' y$, and $g \vert_\alpha \cdot y=g' \vert_{\alpha'} \cdot y$. We claim that $Z(\mu g\cdot\alpha,g|_\alpha, \nu\alpha)$ and $Z(\mu'g'\cdot\alpha',g'|_{\alpha'},\nu'\alpha')$ separate $u$ and $u'$.
To the contrary, let us assume that  there exists $z \in s(\alpha)\Lambda^\infty$ satisfying
$g \vert_\alpha \cdot z=g' \vert_{\alpha'} \cdot z$ and $\T_{d(\mu)}([g \vert_{\alpha z}])=\T_{d(\mu')}([g' \vert_{\alpha' z}])$.
Then, as above, we conclude $g \vert_\alpha=g' \vert_{\alpha'}$ by Corollary \ref{C:5.7}.  Repeatedly applying Lemma \ref{L:translation} yields
\begin{align*}
\T_{d(\mu)}([g \vert_{x}])
&=\T_{d(\mu)}([g \vert_{\alpha y}])=\T_{d(\mu)+d(\alpha)}([g \vert_\alpha|_ y])\\
&=\T_{d(\mu')+d(\alpha')}([g' \vert_{\alpha'}|_ y])=\T_{d(\mu')}([g' \vert_{\alpha' y}])\\
&=\T_{d(\mu')}([g' \vert_{x'}]).
\end{align*}
This is clearly a contradiction.
Therefore $\mathcal{G}_{G,\Lambda}$ is Hausdorff.
\end{proof}


\subsection{$\O_{G,\Lambda}\cong\O_{X_{G,\Lambda}}\cong\mathcal{NO}_{X_{G,\Lambda}}\cong \ca(\G_{G,\Lambda})$}
Let $\Lambda$ be a pseudo free self-similar $k$-graph over a group $G$. Our goal of  this subsection is to show that
$\O_{G,\Lambda}\cong\O_{X_{G,\Lambda}}\cong\mathcal{NO}_{X_{G,\Lambda}}\cong \ca(\G_{G,\Lambda})$
if $|\Lambda^0|<\infty$ and $G$ is amenable.

\begin{lem}
\label{L:propgrpoid}
Let $\Lambda$ be a pseudo free self-similar $k$-graph over a group $G$. Then for $g \in G$ and $\mu,\nu\in \Lambda$, we have
\begin{itemize}
\item[(i)] $\mathds{1}_{Z(\mu,g,\nu)}^*=\mathds{1}_{Z(\nu,g^{-1},\mu)}$; 
\item[(ii)] $\mathds{1}_{Z(\mu,1_G,s(\mu))} \cdot  \mathds{1}_{Z(\nu,1_G,s(\nu))}=\mathds{1}_{Z(\mu\nu,1_G,s(\nu))}$ if $s(\mu)=r(\nu)$;
\item[(iii)] $\mathds{1}_{Z(\mu,1_G,s(\mu))}^*\cdot\mathds{1}_{Z(\mu,1_G,s(\mu))}=\mathds{1}_{Z(s(\mu),1_G,s(\mu))}$;
\item[(iv)] if $s(\mu)=g \cdot s(\nu)$ and $r(\gamma)=s(\nu)$, then
\begin{align*}
\mathds{1}_{Z(\mu,1_G,s(\mu))}\cdot \mathds{1}_{Z(g \cdot \gamma,g \vert_\gamma,\gamma)}\cdot\mathds{1}_{Z(\nu,1_G,s(\nu))}^*=\mathds{1}_{Z(\mu(g \cdot \gamma),g \vert_\gamma,\nu \gamma)};
\end{align*}
\item[(v)] if $s(\mu)=s(\nu)$, then \begin{align*}
\mathds{1}_{Z(v,g,g^{-1} \cdot v)} \cdot \mathds{1}_{Z(\mu,1_G,\nu)}= \begin{cases}
    \mathds{1}_{Z(g \cdot \mu,g \vert_\mu, \nu)} &\text{ if $v=r(g \cdot \mu)$} \\
    0 &\text{ otherwise}.
\end{cases}
\end{align*}
\end{itemize}
\end{lem}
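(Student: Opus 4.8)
The plan is to reduce every one of the five identities to a purely set‑theoretic computation inside $\G_{G,\Lambda}$, using the standard bisection calculus for ample Hausdorff groupoids. Recall that if $\Gamma$ is an ample Hausdorff groupoid and $E,F$ are compact open bisections, then $E^{-1}:=\{\gamma^{-1}:\gamma\in E\}$ and $EF:=\{\beta\gamma:\beta\in E,\ \gamma\in F,\ s(\beta)=r(\gamma)\}$ are again compact open bisections, and $\mathds{1}_E^{*}=\mathds{1}_{E^{-1}}$, $\mathds{1}_E\cdot\mathds{1}_F=\mathds{1}_{EF}$. Both facts drop out of the involution and convolution formulas recalled in Section~\ref{S:Pre}, using that each $\gamma\in EF$ factors \emph{uniquely} as $\gamma=\alpha(\alpha^{-1}\gamma)$ with $\alpha\in E$ because $r|_E$ is injective. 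Since Theorem~\ref{psdo free implies Hau} guarantees that every $Z(\mu,g,\nu)$ is a compact open bisection, it then suffices to prove the set identities
\begin{align*}
&Z(\mu,g,\nu)^{-1}=Z(\nu,g^{-1},\mu);\\
&Z(\mu,1_G,s(\mu))\,Z(\nu,1_G,s(\nu))=Z(\mu\nu,1_G,s(\nu))\quad\text{if }s(\mu)=r(\nu);\\
&Z(s(\mu),1_G,\mu)\,Z(\mu,1_G,s(\mu))=Z(s(\mu),1_G,s(\mu));
\end{align*}
together with the evident set forms of (iv) and (v); here (iii) is first rewritten using (i).

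It is convenient to record normal forms. Since $1_G\cdot x=x$ and $1_G|_x\equiv 1_G$, one has $Z(\mu,1_G,s(\mu))=\big\{\big(\mu x;\,(1,d(\mu));\,x\big):x\in s(\mu)\Lambda^\infty\big\}$, hence by (i) also $Z(s(\mu),1_G,\mu)=\{(x;(1,-d(\mu));\mu x):x\in s(\mu)\Lambda^\infty\}$, and likewise $Z(v,g,g^{-1}\cdot v)=\{(g\cdot x;([g|_x],0);x):x\in(g^{-1}\cdot v)\Lambda^\infty\}$. For (i) I would take a generic element $\big(\mu(g\cdot x);\T_{d(\mu)}([g|_x]),\,d(\mu)-d(\nu);\,\nu x\big)\in Z(\mu,g,\nu)$, invert it in $Q(\bN^k,G)\rtimes_\T\bZ^k$ — the $\bZ^k$‑coordinate changes sign, and the $Q$‑coordinate becomes $\big(\T_{d(\nu)-d(\mu)}\T_{d(\mu)}([g|_x])\big)^{-1}=\T_{d(\nu)}\big([g|_x]^{-1}\big)$ by the homomorphism property $\T_a\T_b=\T_{a+b}$ — and then substitute $y:=g\cdot x$, using that $x\mapsto g\cdot x$ is an action (Lemma~\ref{L:5.2}) and that $g^{-1}|_{g\cdot x}=(g|_x)^{-1}$, to recognize a generic element of $Z(\nu,g^{-1},\mu)$ (the vertex condition $s(\nu)=g^{-1}\cdot s(\mu)$ being immediate). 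Items (ii) and (iii) are then one‑line composability checks on the normal forms: two elements of $Z(\mu,1_G,s(\mu))$ and $Z(\nu,1_G,s(\nu))$ compose exactly when they share the middle path, producing $\big(\mu\nu x;(1,d(\mu\nu));x\big)$, and similarly for (iii).

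Items (iv) and (v) are the substantive ones, and both collapse via Lemma~\ref{L:translation}. For (iv), replace $\mathds{1}_{Z(\nu,1_G,s(\nu))}^{*}$ by $\mathds{1}_{Z(s(\nu),1_G,\nu)}$ via (i) and compute the triple product $Z(\mu,1_G,s(\mu))\,Z(g\cdot\gamma,g|_\gamma,\gamma)\,Z(s(\nu),1_G,\nu)$ from the inside out. Three observations make it work: $d(g\cdot\gamma)=d(\gamma)$ because automorphisms preserve the degree functor, so the $\bZ^k$‑parts assemble to $d(\mu)+d(\gamma)-d(\nu)-d(\gamma)=d(\mu)-d(\nu)$; the first coordinate simplifies via $(g\cdot\gamma)(g|_\gamma\cdot x)=g\cdot(\gamma x)$ (Lemma~\ref{L:5.2}(iv)); and, crucially, $\T_{d(\gamma)}([g|_\gamma|_x])=[g|_{\gamma x}]$ by Lemma~\ref{L:translation}, so after composing with the outer $Z(\mu,1_G,s(\mu))$ the $Q$‑part becomes $\T_{d(\mu)}([g|_{\gamma x}])=\T_{d(\mu)+d(\gamma)}([g|_\gamma|_x])$, which is exactly the cocycle in the normal form of $Z\big(\mu(g\cdot\gamma),g|_\gamma,\nu\gamma\big)$. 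Item (v) is the same computation with no intermediate path: the product $Z(v,g,g^{-1}\cdot v)\,Z(\mu,1_G,\nu)$ is nonempty iff the source $(g^{-1}\cdot v)\Lambda^\infty$ of the first factor meets the range $r(\mu)\Lambda^\infty$ of the second, i.e.\ iff $r(\mu)=g^{-1}\cdot v$, equivalently $v=r(g\cdot\mu)$; when it holds, the same two simplifications identify the product with $Z(g\cdot\mu,g|_\mu,\nu)$, and otherwise it is empty, so the convolution vanishes.

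The only genuine obstacle here is bookkeeping. One must consistently track that an element of $Z(\mu,g,\nu)$ carries the pair $\big(\T_{d(\mu)}([g|_x]),\,d(\mu)-d(\nu)\big)\in Q(\bN^k,G)\rtimes_\T\bZ^k$, that multiplying two such elements shifts the second $Q$‑factor by the power of $\T$ dictated by the $\bZ^k$‑part of the first, and that Lemma~\ref{L:translation} is precisely the identity needed each time a path is absorbed into (or peeled off) the left. No deeper input — amenability, or pseudo freeness beyond what Theorem~\ref{psdo free implies Hau} already uses — is required for this lemma.
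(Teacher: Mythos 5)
Your proposal is correct, and it supplies exactly the verification the paper omits (the paper's proof of Lemma \ref{L:propgrpoid} is simply ``straightforward but tedious, and left to the reader''). Reducing each identity to a set identity among the compact open bisections $Z(\mu,g,\nu)$ via $\mathds{1}_E^*=\mathds{1}_{E^{-1}}$ and $\mathds{1}_E\cdot\mathds{1}_F=\mathds{1}_{EF}$, and then checking those set identities with Lemma \ref{L:5.2}, Lemma \ref{L:translation}, and the identity $g^{-1}\vert_{g\cdot x}=(g\vert_x)^{-1}$, is the intended route, and your bookkeeping of the $Q(\bN^k,G)\rtimes_\T\bZ^k$ component (in particular the inverse formula and the $\T$-shift in the product) is accurate.
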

\begin{proof}
The proof is straightforward but tedious, and left to the reader.
\end{proof}

\begin{thm}\label{T:ONC}
Let $\Lambda$ be a pseudo free self-similar $k$-graph over an amenable group $G$ with $\vert\Lambda^0\vert<\infty$.
Then
\[
\O_{G,\Lambda}\cong\O_{X_{G,\Lambda}}\cong\mathcal{NO}_{X_{G,\Lambda}}\cong \ca(\G_{G,\Lambda}).
\]
\end{thm}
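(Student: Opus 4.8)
The strategy is to prove the chain of isomorphisms by closing a triangle of surjections. By Theorem~\ref{T:OO} we already have $\O_{G,\Lambda}\cong\O_{X_{G,\Lambda}}$, and by Proposition~\ref{P:NOO} there is a surjection $\N\O_{X_{G,\Lambda}}\twoheadrightarrow\O_{G,\Lambda}$; since the identity representation of a product system is always CP covariant once it is CNP covariant, and since the universal CNP representation factors through the universal CP representation, there is also a canonical surjection $\N\O_{X_{G,\Lambda}}\twoheadrightarrow\O_{X_{G,\Lambda}}$. So it suffices to (a) construct a surjective homomorphism $\O_{G,\Lambda}\twoheadrightarrow\ca(\G_{G,\Lambda})$, and (b) construct a surjective homomorphism $\ca(\G_{G,\Lambda})\twoheadrightarrow\N\O_{X_{G,\Lambda}}$; composing the chain $\N\O_{X_{G,\Lambda}}\twoheadrightarrow\O_{G,\Lambda}\twoheadrightarrow\ca(\G_{G,\Lambda})\twoheadrightarrow\N\O_{X_{G,\Lambda}}$ and checking it is the identity on generators forces all maps to be isomorphisms.

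\textbf{Step 1: $\O_{G,\Lambda}\twoheadrightarrow\ca(\G_{G,\Lambda})$.} Using Lemma~\ref{L:propgrpoid}, set $S_\mu:=\mathds{1}_{Z(\mu,1_G,s(\mu))}$ and $U_g:=\sum_{v\in\Lambda^0}\mathds{1}_{Z(g\cdot v,\,g,\,v)}$ in $\ca(\G_{G,\Lambda})$. Parts (i)--(iii) of that lemma give that $\{S_\mu\}$ is a Cuntz--Krieger $\Lambda$-family (the Cuntz--Krieger relation (4) comes from $Z(v,1_G,v)=\bigsqcup_{\mu\in v\Lambda^n}Z(\mu,1_G,\mu)$, a partition of a compact open bisection into compact open bisections), and parts (iv)--(v) give the unitarity of $U_g$ and the covariance relations $u_{gh}=u_gu_h$, $u_gs_\mu=s_{g\cdot\mu}u_{g|_\mu}$ of Definition~\ref{define O_G,Lambda}. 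Universality of $\O_{G,\Lambda}$ yields a homomorphism $\pi$; since $\{S_\mu,U_g\}$ generate $\ca(\G_{G,\Lambda})$ (their products span the $\mathds{1}_{Z(\mu,g,\nu)}$, which span a dense $*$-subalgebra of $C_c(\G_{G,\Lambda})$ by the base property established in Theorem~\ref{psdo free implies Hau}), $\pi$ is surjective.

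\textbf{Step 2: $\ca(\G_{G,\Lambda})\twoheadrightarrow\N\O_{X_{G,\Lambda}}$.} Here one checks that $\{j_{cnp}(j(\delta_v)),j_{cnp}(j(g)),j_{cnp}(\chi_\mu)\}$ in $\N\O_{X_{G,\Lambda}}$ assemble into a representation of $C_c(\G_{G,\Lambda})$ by sending $\mathds{1}_{Z(\mu,g,\nu)}\mapsto j_{cnp}(\chi_\mu)\,j_{cnp}(j(g))\,j_{cnp}(\chi_\nu)^*$; the multiplication rules in $X_{G,\Lambda}$ (using Lemma~\ref{L:self-similar}) match the groupoid convolution relations of Lemma~\ref{L:propgrpoid} (suitably generalized to the $Z(\mu,g,\nu)$), and the CNP covariance of $j_{cnp}$ encodes the Cuntz--Krieger summation. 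Amenability of $G$ guarantees $A_{G,\Lambda}=C(\Lambda^0)\rtimes G$ is nuclear and, together with the ample structure, that $\G_{G,\Lambda}$ is amenable, so the representation of $C_c(\G_{G,\Lambda})$ is bounded for the full (equivalently reduced) norm and extends to $\ca(\G_{G,\Lambda})$. A gauge-action argument then lets one conclude: $\N\O_{X_{G,\Lambda}}$ carries a gauge action, $\ca(\G_{G,\Lambda})$ carries the gauge action dual to the $\bZ^k$-cocycle $(x;[f],p-q;y)\mapsto p-q$, and all maps in the chain are gauge-equivariant; applying the gauge-invariant uniqueness theorem (Theorem~\ref{gauge-inv uni thm}) to $\N\O_{X_{G,\Lambda}}$, it suffices to see the composite $\N\O_{X_{G,\Lambda}}\to\ca(\G_{G,\Lambda})\to\N\O_{X_{G,\Lambda}}$ is injective on $j_{cnp,0}(A_{G,\Lambda})$, which holds because $v\mapsto\mathds{1}_{Z(v,1_G,v)}$ and $g\mapsto\sum_v\mathds{1}_{Z(g\cdot v,g,v)}$ generate a faithful copy of $C(\Lambda^0)\rtimes G$ inside $\ca(\G_{G,\Lambda})$ (the units $Z(v,1_G,v)$ are nonempty since $\Lambda$ has no sources, and faithfulness of the crossed product follows from amenability of $G$).

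\textbf{Main obstacle.} The delicate point is Step~2: verifying that the assignment $\mathds{1}_{Z(\mu,g,\nu)}\mapsto j_{cnp}(\chi_\mu)j_{cnp}(j(g))j_{cnp}(\chi_\nu)^*$ is well defined and multiplicative requires carefully matching the combinatorics of $\Lambda^{\min}(\mu,\nu)$ and the restriction/translation identities (Lemmas~\ref{L:translation}, \ref{L:5.7}, Corollary~\ref{C:5.7}) against the CNP covariance relations of Definition~\ref{covariant rep}, and confirming that pseudo freeness is exactly what prevents distinct group elements from collapsing distinct bisections. Equally, establishing amenability of $\G_{G,\Lambda}$ — presumably by exhibiting it as an extension or a subgroupoid built from the amenable $\G_\Lambda$ and the (amenable, since $G$ is) group bundle data — and then invoking the full-reduced coincidence and the faithfulness of $C(\Lambda^0)\rtimes G$ in $\ca(\G_{G,\Lambda})$, is where the amenability hypothesis on $G$ is genuinely used and where most of the technical work lies.
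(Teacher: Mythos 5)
Your overall diagram of surjections is the right one, and your Step 1 coincides with the paper's: the same generators $S_\mu=\mathds{1}_{Z(\mu,1_G,s(\mu))}$, $U_g=\sum_v\mathds{1}_{Z(g\cdot v,g,v)}$, the same use of Lemma~\ref{L:propgrpoid}, and the same surjectivity computation $S_\mu U_gS_\nu^*=\mathds{1}_{Z(\mu,g,\nu)}$. The problem is Step 2, which is where the theorem is actually proved and where your argument has a genuine gap. You propose to build a homomorphism $\ca(\G_{G,\Lambda})\to\N\O_{X_{G,\Lambda}}$ by sending $\mathds{1}_{Z(\mu,g,\nu)}\mapsto j_{cnp}(\chi_\mu)j_{cnp}(j(g))j_{cnp}(\chi_\nu)^*$, but you never verify that this assignment is well defined and multiplicative on $C_c(\G_{G,\Lambda})$ (the map $(\mu,g,\nu)\mapsto Z(\mu,g,\nu)$ is far from injective, and one must check compatibility with all decompositions $Z(\mu,g,\nu)=\bigsqcup Z(\mu(g\cdot\alpha),g|_\alpha,\nu\alpha)$ and with the intersections computed in Theorem~\ref{psdo free implies Hau}); you yourself flag this as ``where most of the technical work lies'' and then leave it undone. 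Moreover, your fallback claim that injectivity on $j_{cnp,0}(A_{G,\Lambda})$ ``follows from amenability of $G$'' is not a proof: a priori the universal relations in $\ca(\G_{G,\Lambda})$ could kill part of $C(\Lambda^0)\rtimes_\varphi G$, and amenability alone does not rule this out. (Note also that if you really had the inverse homomorphism with the composite equal to the identity, you would not need the gauge-invariant uniqueness theorem at all, so your invocation of it is either redundant or a sign that the inverse map is not actually available.)

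The paper closes the loop differently and more economically: it applies Theorem~\ref{gauge-inv uni thm} to the CNP covariant representation $\pi\circ\psi:X_{G,\Lambda}\to\ca(\G_{G,\Lambda})$, with the gauge action coming from the degree cocycle $c$, and proves injectivity on $A_{G,\Lambda}$ by intertwining two conditional expectations: the faithful expectation $E_1:A_{G,\Lambda}\to C(\Lambda^0)$ (this is where amenability of $G$ is genuinely used) and the expectation $E_2:\ca(\G_{G,\Lambda})\to C_0(\G_{G,\Lambda}^{(0)})$ coming from ampleness. The identity $\pi\circ\psi|_{C(\Lambda^0)}\circ E_1=E_2\circ\pi\circ\psi|_{A_{G,\Lambda}}$ requires $E_2(U_g)=0$ for $g\neq1_G$, which is exactly where pseudo freeness (via Corollary~\ref{C:5.7}) enters; Katsura's injectivity criterion then gives injectivity on $A_{G,\Lambda}$, and the GIUT yields that $\pi\circ h$ is an isomorphism, whence $\pi$ and $h$ both are. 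To repair your proof you should either carry out this conditional-expectation argument or genuinely construct and verify the representation of the ample groupoid algebra inside $\N\O_{X_{G,\Lambda}}$; as written, the decisive step is asserted rather than proved.
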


\begin{proof}
By Theorem \ref{T:OO}, it remains to show that
$\O_{G,\Lambda}\cong\mathcal{NO}_{X_{G,\Lambda}}\cong \ca(\G_{G,\Lambda})$.
For $\mu \in \Lambda$ and $g\in G$, define
\[
S_\mu:=\mathds{1}_{Z(\mu,1_G,s(\mu))} \text{ and }
U_g:=\sum_{v \in \Lambda^0}\mathds{1}_{Z(v,g,g^{-1} \cdot v)}.
\]
One can check that there exists a homomorphism $\pi:\mathcal{O}_{G,\Lambda} \to \ca(\mathcal{G}_{G,\Lambda})$
such that $\pi(s_\mu)=S_\mu$ and $\pi(u_g)=U_g$ for all $\mu \in \Lambda$ and $g \in G$.
In what follows, we show that $\pi$ is actually surjective. For this, it suffices to show that $\mathds{1}_{Z(\mu, g, \nu)}$ is in the range of $\pi$. In fact, we have
\begin{align*}
S_\mu U_g S_\nu^* =
\begin{cases}
\mathds{1}_{Z(\mu, g, \nu)} & \text{if } s(\mu)=g\cdot s(\nu),\\
0 & \text{otherwise}.
\end{cases}
\end{align*}
To see this, by Lemma \ref{L:propgrpoid} one has
\begin{align*}
&S_\mu U_g S_\nu^*((\alpha(h\cdot x); \T_{d(\alpha)}([h|_x]), d(\alpha)-d(\beta); \beta x))\\
=&U_g S_\nu^*((\mu y; [1], d(\mu); y)^{-1}(\alpha(h\cdot x); \T_{d(\alpha)}([h|_x]), d(\alpha)-d(\beta); \beta x))\\
=&U_g S_\nu^*((y; \T_{d(\alpha)-d(\mu)}([h|_x]), d(\alpha)-d(\beta)-d(\mu); \beta x))\ (\text{if }\mu y = \alpha (h\cdot x))\\
=&\mathds{1}_{Z(s(\nu), 1, \nu)}((z; [g^{-1}|_{g\cdot z}], 0; g\cdot z)(y; \T_{d(\alpha)-d(\mu)}([h|_x]), d(\alpha)-d(\beta)-d(\mu); \beta x))\\
 & (\text{if }\mu y = \alpha (h\cdot x))\\
=&\mathds{1}_{Z(s(\nu), 1, \nu)}((z; [g^{-1}|_{g\cdot z}]\T_{d(\alpha)-d(\mu)}([h|_x]),d(\alpha)-d(\beta)-d(\mu);  \beta x))\\
 & (\text{if } y = g\cdot z)\\
=&1 \Leftrightarrow (\alpha(h\cdot x);\T_{d(\alpha)}([h|_x]), d(\alpha)-d(\beta); \beta x) \in Z(\mu, g, \nu)
\end{align*}
by noticing that $[g^{-1}|_{g\cdot z}]\T_{d(\alpha)-d(\mu)}([h|_x])=[1]\Leftrightarrow\T_{d(\mu)}([g|z])=\T_{d(\alpha)}([h|_x])$.

Let $c:\mathcal{G}_{G,\Lambda} \to \mathbb{Z}^k$ be the ``degree mapping" defined by
\[
c(\mu (g \cdot x);\T_{d(\mu)}([g \vert_{x}]),d(\mu)-d(\nu);\nu x):=d(\mu)-d(\nu).
\]
Then $c$ is a continuous $1$-cocycle. By \cite[Proposition~5.1]{Ren80}, there exists a strongly continuous homomorphism $\alpha: \bT^k \to \Aut(\ca(\mathcal{G}_{G,\Lambda}))$ such that $\alpha_\lambda(f)(x)=\lambda^{c(x)}f(x)$ for all $\lambda \in \bT^k$, $f \in C_c(\mathcal{G}_{G,\Lambda})$ and $x \in \mathcal{G}_{G,\Lambda}$.
Let $\psi$ be the  CNP covariant representation of $X_{G,\Lambda}$ to $\O_{G,\Lambda}$ obtained in the proof of Proposition \ref{P:NOO}.
Observe that $\pi \circ \psi$ is a CNP covariant representation of $X_{G,\Lambda}$ to $\ca(\G_{G,\Lambda})$,
which generates $\ca(\mathcal{G}_{G,\Lambda})$. Also, $\alpha$ is a gauge action for $\pi \circ \psi$.

Since $G$ is amenable, by \cite[Proposition~4.1.9]{BO08} there exists a faithful conditional expectation $E_1:A \to C(\Lambda^0)$ such that
\[
E_1(j(\delta_v))=j(\delta_v),\ E_1(j(\delta_v)j(g))=0 \qforal v \in \Lambda^0,\ 1_G \neq g \in G.
\]
Since $\mathcal{G}_{G,\Lambda}$ is ample, by \cite[Proposition~4.8]{Ren80} there exists a conditional expectation
$E_2:\ca(\mathcal{G}_{G,\Lambda}) \to C_0(\mathcal{G}_{G,\Lambda}^{(0)})$ such that $E_2(f)=f \vert_{\mathcal{G}_{G,\Lambda}^0}$ for all $f \in C_c(\mathcal{G}_{G,\Lambda})$. It is straightforward to see that $\pi \circ \psi_{C(\Lambda^0)}$ is injective. Since $\varphi$ is pseudo free, one has $E_2(U_g)=0$ for $1_G\ne g \in G$. Then
\[
\pi \circ \psi \vert_{C(\Lambda^0)} \circ E_1=E_2 \circ \pi \circ \psi \vert_{A_{G,\Lambda}}.
\]
By \cite[Proposition~3.11]{Kat03}, $\pi \circ \psi$ is injective on $A_{G,\Lambda}$. Therefore by Theorem~\ref{gauge-inv uni thm}, $\pi \circ h$ is an isomorphism. Since $h$ and $\pi$ are both surjective, $h$ and $\pi$ are both isomorphisms.
\end{proof}

The above proof can be summarized pictorially as follows:
\[
\xygraph{
!{<0cm, 0cm>; <1cm,0cm>:<0cm,1cm>::}
!{(0,0)}*+{X_{G,\Lambda}}="a"
!{(3,0)}*+{\O_{{G,\Lambda}}}="b"
!{(3,2)}*+{\N\O_{{X_{G,\Lambda}}}}="c"
!{(3,-2)}*+{\O_{{X_{G,\Lambda}}}}="d"
!{(6,0)}*+{\ca(\G_{G,\Lambda})}="e"
"a":^{\psi}"b" "a":^{j_{cnp}}"c" "a":_{j_{cp}}"d" "d":"e" "c":^{\pi\circ h}"e" "d"-@{^{(}->>}_{h'}"b"
"c"-@{->>}^h"b" "b":^{\pi}"e"
}
\xygraph{
!{<0cm, 0cm>; <1cm,0cm>:<0cm,1cm>::}
!{(0,1)}*+{A_{G,\Lambda}}="a"
!{(3,1)}*+{C(\Lambda^0)}="b"
!{(0,-1)}*+{\ca(\G_{G,\Lambda})}="c"
!{(3,-1)}*+{\ca(\G^{(0)}_{G,\Lambda})}="d"
"c":_{E_2}"d" "a":_{{\pi\circ\psi|_{A_{G,\Lambda}}}}"c" "b":^{\pi\circ\psi|_{C(\Lambda^0)}}"d"
"a":^{E_1}"b"
}
\]

\subsection{The embedding of $\O_\Lambda$ and $G$ into $\O_{G,\Lambda}$}

\begin{prop}\label{embed O_Lambda into O_G,Lambda}
Let $\Lambda$ be a self-similar $k$-graph over a group $G$ with $\vert\Lambda^0\vert<\infty$. Then there exists an injective homomorphism $\iota:\mathcal{O}_\Lambda \to \mathcal{O}_{G,\Lambda}$.
\end{prop}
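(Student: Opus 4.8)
The plan is to produce $\iota$ from the universal property of $\mathcal{O}_\Lambda$ and then establish injectivity by means of the gauge-invariant uniqueness theorem for $k$-graph C*-algebras (Theorem~\ref{gauge-inv uni thm k-graph}). Since the generating data of $\mathcal{O}_{G,\Lambda}$ already includes a Cuntz-Krieger $\Lambda$-family $\{s_\mu\}_{\mu\in\Lambda}$, the universal property of $\mathcal{O}_\Lambda$ immediately gives a homomorphism $\iota\colon\mathcal{O}_\Lambda\to\mathcal{O}_{G,\Lambda}$ carrying the universal family $\{t_\mu\}$ to $\{s_\mu\}$; its image is $B:=\ca(s_\mu:\mu\in\Lambda)$, and it suffices to show that $\iota\colon\mathcal{O}_\Lambda\to B$ is an isomorphism.

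To invoke Theorem~\ref{gauge-inv uni thm k-graph} I need two ingredients: that $s_v\neq0$ for every $v\in\Lambda^0$, and that the family $\{s_\mu\}$ admits a gauge action on $B$. The first is exactly Remark~\ref{R:nontrivial}. For the second, I would first build a gauge action on all of $\mathcal{O}_{G,\Lambda}$: for $z\in\bT^k$ the rescaled family $\{z^{d(\mu)}s_\mu\}$ is again a Cuntz-Krieger $\Lambda$-family (all the Cuntz-Krieger relations are homogeneous; e.g.\ $\sum_{\mu\in v\Lambda^n}(z^{d(\mu)}s_\mu)(z^{d(\mu)}s_\mu)^*=\sum_{\mu\in v\Lambda^n}s_\mu s_\mu^*=s_v$), and together with $\{u_g\}$ it satisfies Definition~\ref{define O_G,Lambda}~(i)--(ii), since $u_g(z^{d(\mu)}s_\mu)=z^{d(\mu)}s_{g\cdot\mu}u_{g\vert_\mu}=(z^{d(g\cdot\mu)}s_{g\cdot\mu})u_{g\vert_\mu}$, using that automorphisms of $\Lambda$ preserve each $\Lambda^n$, so $d(g\cdot\mu)=d(\mu)$. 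The universal property of $\mathcal{O}_{G,\Lambda}$ then yields $\gamma_z\in\End(\mathcal{O}_{G,\Lambda})$ with $\gamma_z(s_\mu)=z^{d(\mu)}s_\mu$ and $\gamma_z(u_g)=u_g$; checking $\gamma_z\gamma_w=\gamma_{zw}$ and $\gamma_1=\id$ on generators shows $\gamma\colon\bT^k\to\Aut(\mathcal{O}_{G,\Lambda})$ is a homomorphism, and strong continuity follows from an $\varepsilon/3$ argument on the dense $*$-subalgebra of Proposition~\ref{generator of O_G, Lambda}. Since $\gamma_z(s_\mu)=z^{d(\mu)}s_\mu\in B$, each $\gamma_z$ restricts to an automorphism of $B$, giving the required gauge action for the family $\{s_\mu\}$.

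With both hypotheses in hand, Theorem~\ref{gauge-inv uni thm k-graph} shows that $\iota\colon\mathcal{O}_\Lambda\to B$ is an isomorphism, hence $\iota\colon\mathcal{O}_\Lambda\to\mathcal{O}_{G,\Lambda}$ is injective, as wanted. There is no serious obstacle here; the only point requiring care is the verification that $\gamma$ is well defined and restricts to $B$ --- that is, the homogeneity of the Cuntz-Krieger relations together with the observation $d(g\cdot\mu)=d(\mu)$, plus the routine strong-continuity check. (Alternatively, one could bypass the abstract $\gamma$ by exhibiting a gauge action directly on the concrete family of Remark~\ref{R:nontrivial}, but passing through the universal property is cleaner.)
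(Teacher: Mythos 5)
Your proof is correct and follows essentially the same route as the paper: construct the gauge action on all of $\mathcal{O}_{G,\Lambda}$ via its universal property (using $d(g\cdot\mu)=d(\mu)$ to check compatibility with the $u_g$), restrict it to $\ca(s_\mu)$, and apply the gauge-invariant uniqueness theorem together with Remark~\ref{R:nontrivial}. Your write-up simply supplies more of the verification details than the paper does.
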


\begin{proof}
The universal property of $\mathcal{O}_{G,\Lambda}$ induces a strongly continuous homomorphism $\gamma:\bT^k \to \Aut(\mathcal{O}_{G,\Lambda})$ such that $\gamma_z(s_\mu)=z^{d(\mu)}s_\mu$ and $\gamma_z(u_g)=u_g$ for all $z \in \bT^k, \mu \in \Lambda,g \in G$. It yields a gauge action $\alpha:\bT^k \to \Aut(\ca(s_\mu))$ such that $\alpha_z(s_\mu)=z^{d(\mu)}s_\mu$ for all $z \in \bT^k,\mu \in \Lambda$. By Theorem~\ref{gauge-inv uni thm k-graph} and Remark \ref{R:nontrivial}, we obtain an injection $\iota$ from $\ca(\Lambda)$ to $\mathcal{O}_{G,\Lambda}$.
\end{proof}

\begin{prop}
Let $\Lambda$ be a pseudo free self-similar $k$-graph over an amenable group $G$ with $\vert\Lambda^0\vert<\infty$. Then $A_{G,\Lambda}$ embeds into $\O_{G,\Lambda}$. Hence $C^*(G)$ and $G$ also embed into $\O_{G,\Lambda}$.
\end{prop}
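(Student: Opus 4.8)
The plan is to extract the embedding $A_{G,\Lambda}\hookrightarrow\O_{G,\Lambda}$ directly from the proof of Theorem~\ref{T:ONC}, and then to deduce the assertions about $C^*(G)$ and $G$ from the elementary fact that $C^*(G)$ sits inside a crossed product by the \emph{unital} coefficient algebra $C(\Lambda^0)$.

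First I would observe that the embedding $A_{G,\Lambda}\hookrightarrow\O_{G,\Lambda}$ is essentially already established inside the proof of Theorem~\ref{T:ONC}. The CNP covariant representation $\psi\colon X_{G,\Lambda}\to\O_{G,\Lambda}$ of Proposition~\ref{P:NOO} restricts on $X_{G,\Lambda,0}=A_{G,\Lambda}$ to the canonical homomorphism $\psi_0$ determined by $\psi_0(j(\delta_v))=s_v$ and $\psi_0(j(g))=u_g$; and the argument given there, using the conditional expectations $E_1\colon A_{G,\Lambda}\to C(\Lambda^0)$ and $E_2\colon\ca(\G_{G,\Lambda})\to C_0(\G_{G,\Lambda}^{(0)})$ together with \cite[Proposition~3.11]{Kat03}, shows precisely that $\pi\circ\psi_0$ is injective, where $\pi\colon\O_{G,\Lambda}\to\ca(\G_{G,\Lambda})$ is the isomorphism of Theorem~\ref{T:ONC} (here pseudo freeness of the action enters, through that theorem). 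Consequently $\psi_0$ itself is injective, so $A_{G,\Lambda}$ embeds into $\O_{G,\Lambda}$.

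Next I would establish $C^*(G)\hookrightarrow A_{G,\Lambda}$. Since $\Lambda^0$ is finite, $C(\Lambda^0)$ is unital, so there is a canonical homomorphism $\phi\colon C^*(G)\to A_{G,\Lambda}=C(\Lambda^0)\rtimes_\varphi G$ carrying the canonical unitaries of $C^*(G)$ to $\{j(g)\}_{g\in G}$. Let $E\colon A_{G,\Lambda}\to C(\Lambda^0)$ be the canonical conditional expectation (so $E(j(g))=\delta_{g,1_G}\,1_{C(\Lambda^0)}$), and fix any faithful state $\mu$ on $C(\Lambda^0)$; then $(\mu\circ E)\circ\phi$ and the canonical trace $\tau$ on $C^*(G)$ agree on the canonical unitaries and hence coincide. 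Because $G$ is amenable, $C^*(G)=C^*_r(G)$ and $\tau$ is faithful, so $\phi(x)=0$ forces $\tau(x^*x)=(\mu\circ E)(\phi(x^*x))=0$ and therefore $x=0$; thus $\phi$ is injective. Composing $\phi$ with the embedding of the previous paragraph yields $C^*(G)\hookrightarrow\O_{G,\Lambda}$. Finally, $g\mapsto u_g$ is a group homomorphism of $G$ into the unitary group of $\O_{G,\Lambda}$ by Definition~\ref{define O_G,Lambda}(i); under the embedding just obtained it is the canonical unitary representation of $G$ inside $C^*(G)=C^*_r(G)$, which is injective (distinct group elements give distinct left‑translation operators on $\ell^2(G)$), so $G$ embeds into $\O_{G,\Lambda}$ as well.

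I do not expect a genuine obstacle: the real content has been absorbed into Theorem~\ref{T:ONC}, and the only new ingredient --- the inclusion $C^*(G)\hookrightarrow C(\Lambda^0)\rtimes_\varphi G$ --- is a routine crossed-product fact, its sole subtlety being the use of amenability to identify $C^*(G)$ with $C^*_r(G)$ (so that the canonical trace is faithful). If preferred, this inclusion can simply be quoted from a standard reference on crossed products rather than argued as above.
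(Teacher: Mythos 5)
Your proof is correct and follows essentially the same route as the paper: the embedding of $A_{G,\Lambda}$ is read off from the injectivity of $\pi\circ\psi\vert_{A_{G,\Lambda}}$ already established in the proof of Theorem~\ref{T:ONC}. The paper leaves the final assertion (``Hence $C^*(G)$ and $G$ also embed'') without argument, and your conditional-expectation/faithful-trace verification that $C^*(G)=C^*_r(G)$ embeds in $C(\Lambda^0)\rtimes_\varphi G$ is a correct, standard way to supply that missing detail.
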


\begin{proof}
By the proof of Theorem~\ref{T:ONC}, $\pi\circ\psi|_{A_{G,\Lambda}}$ is an injective homomorphism from $A_{G,\Lambda}$ in $\ca(\mathcal{G}_{G,\Lambda})$. Again by Theorem~\ref{T:ONC}, there exists an embedding $\iota:A_{G,\Lambda} \to \O_{G,\Lambda}$ such that $\iota(j(\delta_v))=s_v$ and $\iota(j(g))=u_g$ for all $v \in \Lambda^0$ and $g \in G$.
\end{proof}

\section{Nuclearity, Simplicity, and a Dichotomy of $\O_{G,\Lambda}$}

\label{S:Pro}

Let $\Lambda$ with $|\Lambda^0|<\infty$ be a pseudo free $k$-graph over an amenable group $G$ .
In this section, we study the properties of $\O_{G,\Lambda}$ in detail.
We prove that $\O_{G,\Lambda}$ is always nuclear and satisfies the UCT; characterize when $\O_{G,\Lambda}$ is simple; and
show that there is a dichotomy for $\O_{G,\Lambda}$ if it is simple: it is either stably finite or purely infinite.
In particular, when $\O_{G,\Lambda}$ is simple and purely infinite, then
it is classifiable by its K-theory thanks to the Kirchberg-Phillips classification theorem \cite{Kir94, Phi00}.

\subsection{Nuclearity and simplicity of $\O_{G,\Lambda}$}

\begin{defn}
Let $\Lambda$ be a self-similar $k$-graph over a group $G$.
Then $\Lambda$ is said to be \emph{$G$-cofinal} if for any $x \in \Lambda^\infty$ and $v \in \Lambda^0$, there exist $p \in \mathbb{N}^k, \mu \in \Lambda,g \in G$ such that $s(\mu)=x(p,p)$ and $g \cdot v=r(\mu)$.
\end{defn}

Clearly, if $\Lambda$ is cofinal in the usual sense, then it is $G$-cofinal.
We give a characterization of the $G$-cofinality in terms of finite paths, which is a generalization of \cite[Proposition~A.2]{LS10}.

\begin{prop}\label{characterization of G-cof}
Let $\Lambda$ be a self-similar $k$-graph over a group $G$. Then $\Lambda$ is $G$-cofinal if and only if for any $v,w \in \Lambda^0$, there exists $p \in \mathbb{N}^k$ such that $(G \cdot v)\Lambda s(\mu) \neq \mt$ for all $\mu \in w\Lambda^p$.
\end{prop}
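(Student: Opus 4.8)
The plan is to prove the equivalence by unwinding the definition of $G$-cofinality and reformulating everything in terms of finite paths, mimicking the argument for ordinary cofinality in \cite[Proposition~A.2]{LS10} but keeping track of the group action.

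\textbf{The forward direction.} Suppose $\Lambda$ is $G$-cofinal, and fix $v,w\in\Lambda^0$. I would argue by contradiction: if no such $p\in\bN^k$ exists, then for every $p$ there is some $\mu_p\in w\Lambda^p$ with $(G\cdot v)\Lambda s(\mu_p)=\mt$. The key point is to organize the $\mu_p$ into a single infinite path. Since $\Lambda$ is row-finite, for each $p$ the set $w\Lambda^p$ is finite; using a König's-lemma / compactness argument on the tree of finite paths out of $w$ (the cylinder set $Z(w)\subseteq\Lambda^\infty$ is compact and nonempty), I can extract an infinite path $x\in w\Lambda^\infty$ such that for cofinally many $p$ (hence, after reindexing, for every $p$ on an increasing sequence) the vertex $s(x(0,p))=x(p,p)$ satisfies $(G\cdot v)\Lambda x(p,p)=\mt$. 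Now apply $G$-cofinality to this $x$ and the vertex $v$: there are $p\in\bN^k$, $\mu\in\Lambda$, $g\in G$ with $s(\mu)=x(p,p)$ and $g\cdot v=r(\mu)$. Choosing $p$ along the bad sequence, $\mu$ witnesses $(g\cdot v)\Lambda x(p,p)\neq\mt$, so $(G\cdot v)\Lambda x(p,p)\neq\mt$, contradicting the choice of $x$.

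\textbf{The reverse direction.} Suppose the finite-path condition holds, and fix $x\in\Lambda^\infty$ and $v\in\Lambda^0$. Set $w:=x(0,0)=r(x)$. By hypothesis there is $p\in\bN^k$ with $(G\cdot v)\Lambda s(\mu)\neq\mt$ for all $\mu\in w\Lambda^p$. In particular, taking $\mu:=x(0,p)\in w\Lambda^p$, we get $(G\cdot v)\Lambda s(x(0,p))\neq\mt$; but $s(x(0,p))=x(p,p)$. So there exist $g\in G$ and $\nu\in(g\cdot v)\Lambda x(p,p)$, i.e.\ $\nu\in\Lambda$ with $r(\nu)=g\cdot v$ and $s(\nu)=x(p,p)$. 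This is exactly the required data (with $\mu$ there being our $\nu$ and $p$ unchanged), so $\Lambda$ is $G$-cofinal.

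\textbf{The main obstacle} is the compactness/König's-lemma step in the forward direction: assembling the family $\{\mu_p\}$ into one infinite path $x$ along which the "bad" condition persists. The subtlety is that the bad vertices $s(\mu_p)$ for different $p$ need not lie on a common path a priori, so I must instead argue at the level of $\Lambda^\infty$: consider the (closed, hence compact) set of infinite paths $x\in w\Lambda^\infty$ such that $(G\cdot v)\Lambda x(p,p)=\mt$ for infinitely many $p$, and show it is nonempty using row-finiteness and the failure of the finite-path condition at every level $p$. One clean way is a diagonal argument: for each $n$ pick $p_n$ with $p_n\to\infty$ coordinatewise (e.g.\ $p_n=n(e_1+\dots+e_k)$ when $k<\infty$; the $k=\infty$ case needs a mild adjustment since one cannot reach all of $\bN^k$, but $G$-cofinality only requires \emph{some} $p$, so working coordinate-by-coordinate or along an exhausting sequence of finitely-supported multi-indices suffices) and a bad $\mu_{p_n}\in w\Lambda^{p_n}$, then use compactness of $Z(w)$ to pass to a convergent subnet of the infinite paths $\mu_{p_n}z_n$ (any completion $z_n\in s(\mu_{p_n})\Lambda^\infty$, which exists since $\Lambda$ has no sources); the limit path $x$ inherits infinitely many bad truncations because badness of a vertex is determined by an initial segment. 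Once this path is in hand, the rest is a direct application of the definitions.
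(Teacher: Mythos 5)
Your proof is correct and follows essentially the same route as the paper's: argue by contradiction, use row-finiteness together with a K\"onig's-lemma/compactness argument (the paper builds the sequence $\mu_n$ inductively, you take a limit point in the compact set $Z(w)$ --- the same device) to produce an infinite path $x\in w\Lambda^\infty$ all of whose vertices $x(p,p)$ satisfy $(G\cdot v)\Lambda x(p,p)=\mt$, and then contradict $G$-cofinality; the converse is the same direct verification. One small imprecision: $G$-cofinality hands you \emph{some} $p$ rather than letting you ``choose $p$ along the bad sequence,'' but this is harmless because badness of $x(q,q)$ propagates back to $x(p,p)$ for every $p\le q$ (any $\alpha\in(G\cdot v)\Lambda x(p,p)$ composes with $x(p,q)$ to give an element of $(G\cdot v)\Lambda x(q,q)$), so cofinally many bad levels already forces every level to be bad.
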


\begin{proof}
First of all, suppose that $\varphi$ is $G$-cofinal. To the contrary, let us assume that there exist $v,w \in \Lambda^0$ such that for any $p \in \mathbb{N}^k$, there exists $\mu \in w\Lambda^p$ satisfying $(G \cdot v)\Lambda s(\mu) = \mt$.
If $k<\infty$, then there exists a sequence $\{\mu_n\}_{n=1}^{\infty} \subset w\Lambda$ such that for any $n \geq 1$,
\begin{itemize}
\item $d(\mu_n)=n(e_1+\cdots +e_k)$;
\item $\mu_{n+1}=\mu_n \nu$ for some $\nu \in \Lambda$; and
\item for any $p \in \mathbb{N}^k$, there exists $\lambda \in s(\mu_n)\Lambda^p$ such that $(G \cdot v)\Lambda s(\lambda) = \mt$.
\end{itemize}
If $k=\infty$, then there exists a sequence $\{\mu_n\}_{n=1}^{\infty} \subset w\Lambda$ such that for any $n \geq 1$,
\begin{itemize}
\item $d(\mu_n)=n(e_1+\cdots +e_n)$;
\item $\mu_{n+1}=\mu_n \nu$ for some $\nu \in \Lambda$; and
\item for any $p \in \mathbb{N}^k$, there exists $\lambda \in s(\mu_n)\Lambda^p$ such that $(G \cdot v)\Lambda s(\lambda) = \mt$.
\end{itemize}
In either case, we obtain $x \in \Lambda^\infty$ satisfying that for any $p \in \mathbb{N}^k, (G \cdot v)\Lambda x(p,p)= \mt$, which contradicts with the $G$-cofinality condition.


Conversely, suppose that for any $v,w \in \Lambda^0$, there exists $p \in \mathbb{N}^k$ such that $(G \cdot v)\Lambda s(\mu) \neq \mt$ for all $\mu \in w\Lambda^p$. Fix $v \in \Lambda^0$ and $x \in \Lambda^\infty$. By the assumption, there exists $p \in \mathbb{N}^k$ such that $(G \cdot v)\Lambda s(\mu) \neq \mt$ for all $\mu \in x(0,0)\Lambda^p$. So $(G \cdot v) \Lambda x(p,p)\neq \mt$. Hence $\varphi$ is $G$-cofinal.
\end{proof}

\begin{prop}\label{minimal iff cofinal}
Let $\Lambda$ be a pseudo free self-similar $k$-graph over a group $G$. Then $\mathcal{G}_{G,\Lambda}$ is minimal if and only if $\Lambda$ is $G$-cofinal.
\end{prop}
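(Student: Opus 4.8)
The plan is to characterize minimality of $\G_{G,\Lambda}$ directly through its unit space $\G_{G,\Lambda}^{(0)}$, which may be identified with $\Lambda^\infty$ (via $x \mapsto (x;[1_G],0;x)$). Recall that an \'etale groupoid is minimal precisely when every orbit is dense in the unit space, i.e., for every $x \in \Lambda^\infty$ the set $r(s^{-1}(x)) = \{\text{sources of arrows with range }x\}$ — equivalently the $\G_{G,\Lambda}$-orbit of $x$ — is dense in $\Lambda^\infty$. So the first step is to unwind what the orbit of $x$ is: using the description $\G_{G,\Lambda}= \{(\mu(g\cdot y);\T_{d(\mu)}([g|_y]),d(\mu)-d(\nu);\nu y)\}$, the orbit of a point $\nu y$ consists of all points of the form $\mu(g\cdot y)$ where $g\in G$, $\mu\in\Lambda$, $\nu\in\Lambda$, $s(\mu)=g\cdot s(\nu)$. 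In other words, starting from $x$, one may chop off an initial segment $\nu$, apply a group element $g$ to the tail, and prepend any path $\mu$ with $s(\mu)=g\cdot r(\text{tail})$. Density of this set is tested against the basic open sets $Z(\lambda)$, $\lambda\in\Lambda$.

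Next I would prove the two implications. For ($\Leftarrow$) assume $\Lambda$ is $G$-cofinal; fix $x\in\Lambda^\infty$ and a basic open set $Z(\lambda)$ with $\lambda\in\Lambda$, and I must produce a point of the orbit of $x$ lying in $Z(\lambda)$. Apply $G$-cofinality to $v:=r(\lambda)$ and the infinite path $x$: there are $p\in\bN^k$, $\mu\in\Lambda$, $g\in G$ with $s(\mu)=x(p,p)$ and $g\cdot v = r(\mu)$. Then $g^{-1}\cdot\mu$ is a path with range $v=r(\lambda)$ and source $g^{-1}\cdot x(p,p)$; concatenating, $\lambda' := \lambda$ followed by the appropriate continuation... more precisely, set $\nu := x(0,p)$, so $\sigma^p(x)\in s(\mu)\Lambda^\infty = (g\cdot r(g^{-1}\cdot\mu))\Lambda^\infty$, and form the orbit point $(g^{-1}\cdot\mu)\big(g^{-1}|_{\text{stuff}}\cdot\sigma^p(x)\big)$, which has an initial segment $g^{-1}\cdot\mu$ of range $v=r(\lambda)$; by the factorization property this lies in $Z(r(\lambda))\supseteq$ — one then has to further extend/refine to land inside $Z(\lambda)$ itself, which requires choosing the path $\mu$ above more carefully, e.g. demanding $g\cdot\lambda$ be an initial segment of $\mu$ (possible by row-finiteness and no-sources after passing further along $x$). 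This is the routine but slightly fiddly direction. For ($\Rightarrow$) assume $\G_{G,\Lambda}$ is minimal; given $v\in\Lambda^0$ and $x\in\Lambda^\infty$, density of the orbit of $x$ in $Z(v)$ produces $p,\mu,g$ exhibiting $G$-cofinality; indeed an orbit point in $Z(v)$ has the form $\mu'(g'\cdot y)$ with $r(\mu')=v$, where $y=\sigma^q(x)$ for some $q$, giving immediately $s(\mu')=g'\cdot x(q,q)$ and $g'\cdot r(g'^{-1}\text{-reverse})=v$ after relabeling — this direction is short.

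The main obstacle I expect is bookkeeping in the ($\Leftarrow$) direction: carefully tracking how the restriction maps $g|_\mu$ and the translations $\T_{d(\mu)}$ interact when one prepends a long path and simultaneously twists the tail by a group element, and ensuring the point constructed genuinely lands in $Z(\lambda)$ for the \emph{full} path $\lambda$ rather than just its range vertex. This is where one must invoke Lemma \ref{extend self similar}-type identities (e.g. $g\cdot(\mu x)=(g\cdot\mu)(g|_\mu\cdot x)$ from Lemma \ref{L:5.2}(iv)) and the characterization of $G$-cofinality in terms of finite paths from Proposition \ref{characterization of G-cof}, which is precisely tailored to feed a path $\mu\in w\Lambda^p$ with the required property rather than just an infinite-path statement. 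Using Proposition \ref{characterization of G-cof} in place of the raw definition should streamline the argument considerably: given the basic open set $Z(\lambda)$, apply it with $w = x(0,0)$ and $v = s(\lambda)$ reversed appropriately to get a $p$ such that every $\mu\in x(0,0)\Lambda^p$ satisfies $(G\cdot s(\lambda))\Lambda s(\mu)\ne\mt$, then pick the $\mu$ that is an initial segment of $x$ and read off the connecting path and group element. I would also remark that $\G_{G,\Lambda}$ being Hausdorff and \'etale (Theorem \ref{psdo free implies Hau}) is what licenses the orbit-density reformulation of minimality, so that should be cited at the outset.
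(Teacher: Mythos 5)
Your proposal is correct and follows essentially the same route as the paper: the paper phrases minimality via open invariant subsets rather than orbit density (equivalent for the \'etale groupoid $\G_{G,\Lambda}$), but the groupoid elements constructed in both directions are the same. One caution on the $(\Leftarrow)$ direction: your first attempt applies $G$-cofinality to $v:=r(\lambda)$ and then hopes to arrange that $g\cdot\lambda$ is an initial segment of the connecting path $\mu$, which cofinality does not give you; the correct move is the one in your final paragraph --- connect a tail vertex $x(p,p)$ to $s(\lambda)$ modulo $G$, so that prepending $\lambda$ to the $g^{-1}$-translated connecting path produces an orbit point of $x$ lying in $Z(\lambda)$ --- which is exactly what the paper does (there cofinality is invoked so that $g\cdot s(\mu)=r(\nu)$ for the fixed $\mu$ with $Z(\mu)\subseteq U$, and invariance of $U$ then replaces the density argument).
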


\begin{proof}
Suppose that $\mathcal{G}_{G,\Lambda}$ is minimal. Fix $x \in \Lambda^\infty$ and $v \in \Lambda^0$. Let $F:=\overline{\bigcup_{g \in G}r(s^{-1}(g \cdot x))}$, which is a nonempty closed invariant subset.
Since $\mathcal{G}_{G,\Lambda}$ is minimal, $F=\mathcal{G}_{G,\Lambda}^0$. Then $Z(v) \subset F$. So there exist $g \in G, \gamma \in s^{-1}(g \cdot x)$ such that $r(\gamma) \in Z(v)$.
Let us assume that $\gamma=(\mu(\tilde g \cdot y), \Phi_{d(\mu)}([\tilde g|_y]), d(\mu)-d(\nu), \nu y)$, where $\nu y=g\cdot x$ and $s(\mu)=\tilde g s(\nu)$.
So
\[
s(\mu)=\tilde g s(\nu)=\tilde g r(y)=\tilde g (s(g\cdot x(0,d(\nu)))=\tilde g g x(d(\nu), d(\nu)).
\]
Thus
\[
s((\tilde g g)^{-1}\mu)=x(d(\nu), d(\nu))\text{ and } r((\tilde g g)^{-1}\mu)=(\tilde g g)^{-1} v.
\]
Therefore, $\varphi$ is $G$-cofinal.

Conversely, suppose that $\varphi$ is $G$-cofinal. Fix a nonempty open invariant subset $U$. Then there exists $\mu \in \Lambda$ such that
$Z(\mu) \subset U$. For $x \in \Lambda^\infty$, since $\varphi$ is $G$-cofinal, there exist
$p \in \mathbb{N}^k, \nu \in \Lambda,g \in G$ such that $s(\nu)=x(p,p), g \cdot s(\mu)=r(\nu)$. Let
\begin{align*}
\gamma:&=(x(0,p)(g^{-1}|_\nu)^{-1} (g^{-1}|_\nu\cdot \sigma^p(x));[\T_p(g \vert_{g^{-1} \cdot \nu})],p-d(\mu(g^{-1}\cdot \nu));\\
 &\hskip 8cm \mu g^{-1} \cdot \nu (g^{-1}|_\nu\cdot \sigma^p(x)))\\
&=(x;[\T_p(g \vert_{g^{-1} \cdot \nu})],p-d(\mu(g^{-1}\cdot \nu));\mu (g^{-1} \cdot (\nu \sigma^p(x)))).
\end{align*}
Note that $\gamma\in \G_{G,\Lambda}$ as
\begin{align*}
x(p,p)
&=s(x(0,p))=s(\nu)=(g^{-1}|_\nu)^{-1}s(\mu g^{-1}\cdot \nu)\\
&=(g^{-1}|_\nu)^{-1}r(g^{-1}|_\nu\sigma^p(x))=r(\sigma^p(x)).
\end{align*}
Then $s(\gamma) \in U$. So $r(\gamma)=x \in U$ as $U$ is invariant. Since $x$ is arbitrary, $U=\Lambda^\infty$. Hence $\mathcal{G}_{G,\Lambda}$ is minimal.
\end{proof}

\begin{defn}
Let $G$ be a self-similar group over $\Lambda$.
Then $\Lambda$ is said to be \emph{$G$-aperiodic} if for any $v \in \Lambda^0$ there exists $x \in v\Lambda^\infty$ such that  for $g \in G,p,q \in \mathbb{N}^k$, if $g \neq 1_G$ or $p \neq q$ then $\sigma^{p}(x) \neq g \cdot \sigma^q(x)$.
\end{defn}

It is easy to see that the $G$-aperiodicity of $\Lambda$ implies the aperiodicity of $\Lambda$.

\begin{prop}\label{top principal iff ape}
Let $\Lambda$ be a pseudo free self-similar $k$-graph over a group $G$.
Then $\mathcal{G}_{G,\Lambda}$ is topologically principal if and only if $\Lambda$ is $G$-aperiodic.
\end{prop}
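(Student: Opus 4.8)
The plan is to establish both implications via the standard connection between topological principality of an étale groupoid and the behaviour of its isotropy groups on a dense set of units, specializing to $\mathcal{G}_{G,\Lambda}$ whose unit space is $\Lambda^\infty$. First I would unwind the meaning of the isotropy group $(\mathcal{G}_{G,\Lambda})_x^x$ at a point $x\in\Lambda^\infty$: using the parametrization $\mathcal{G}_{G,\Lambda}=\{(\mu(g\cdot y);\T_{d(\mu)}([g|_y]),d(\mu)-d(\nu);\nu y)\}$, an element of the isotropy group has the form $(x;[f],p-q;x)$, which forces $\sigma^p(x)=f(p)\cdot\sigma^q(x)$ together with the compatibility condition $f(p+n)=f(p)|_{\sigma^q(x)}(n)$ for all $n$. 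Hence the isotropy group at $x$ is trivial precisely when the only solution is $p=q$, $[f]=[1_G]$; by pseudo freeness and Corollary \ref{C:5.7} this is equivalent to requiring that $\sigma^p(x)=g\cdot\sigma^q(x)$ has no solution with $g\neq 1_G$ or $p\neq q$. So a unit $x$ has trivial isotropy iff $x$ "witnesses" the $G$-aperiodicity condition at its base vertex $x(0,0)$.

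Next I would prove the ($\Leftarrow$) direction. Assuming $\Lambda$ is $G$-aperiodic, fix an arbitrary basic open set $Z(\mu)\subseteq\Lambda^\infty$; I want to find a point in $Z(\mu)$ with trivial isotropy. By $G$-aperiodicity applied to the vertex $s(\mu)$, there is $y\in s(\mu)\Lambda^\infty$ with $\sigma^p(y)\neq g\cdot\sigma^q(y)$ whenever $g\neq 1_G$ or $p\neq q$. Set $x=\mu y\in Z(\mu)$. I would then check that $x$ inherits the aperiodicity property: if $\sigma^p(x)=g\cdot\sigma^q(x)$ for some $(g,p,q)$, then for $p,q$ large enough (past $d(\mu)$) this reduces — via $\sigma^p(\mu y)=\sigma^{p-d(\mu)}(y)$ and Lemma \ref{L:5.2}(iii) together with Lemma \ref{L:translation} on the $Q(\bN^k,G)$ component — to a relation of the same type on $y$, so that $g=1_G$ and $p=q$; the finitely many small cases $p$ or $q$ not exceeding $d(\mu)$ are handled directly using the factorization property and pseudo freeness. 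This shows $x$ has trivial isotropy, and since the $Z(\mu)$ form a basis for $\Lambda^\infty$, the set of units with trivial isotropy is dense, i.e. $\mathcal{G}_{G,\Lambda}$ is topologically principal.

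For the ($\Rightarrow$) direction, suppose $\mathcal{G}_{G,\Lambda}$ is topologically principal and fix $v\in\Lambda^0$. The set of units with trivial isotropy is dense in $\Lambda^\infty$, so it meets the open set $Z(v)$; pick $x\in Z(v)$ with $(\mathcal{G}_{G,\Lambda})_x^x$ trivial. By the description of the isotropy group above, triviality means exactly that $\sigma^p(x)=g\cdot\sigma^q(x)$ implies $g=1_G$ and $p=q$; since $x\in v\Lambda^\infty$, this $x$ witnesses $G$-aperiodicity at $v$, and as $v$ was arbitrary we conclude $\Lambda$ is $G$-aperiodic.

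The main obstacle I anticipate is the bookkeeping in the ($\Leftarrow$) direction, specifically verifying that the $Q(\bN^k,G)$-component (the "lag group" part) of a would-be nontrivial isotropy element at $x=\mu y$ genuinely descends to a nontrivial relation at $y$. This is where the translation identity of Lemma \ref{L:translation}, together with the fact that each $\T_z$ is an automorphism of $Q(\bN^k,G)$ and the pseudo freeness input of Corollary \ref{C:5.7}, must be combined carefully; the argument parallels the Hausdorffness proof of Theorem \ref{psdo free implies Hau}, and I would model the computation on the case analysis there. The equivalence "trivial isotropy at $x$ $\Leftrightarrow$ $x$ is aperiodic" is the conceptual heart, and once it is cleanly stated both directions of the proposition follow from density of $\{Z(v)\}$ and $\{Z(\mu)\}$ in $\Lambda^\infty$.
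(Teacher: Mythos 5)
Your proposal is correct and follows essentially the same route as the paper: both directions rest on the dictionary, valid under pseudo freeness via Corollary~\ref{C:5.7}, between triviality of the isotropy group at $x\in\Lambda^\infty$ and the absence of relations $\sigma^p(x)=g\cdot\sigma^q(x)$ with $g\neq 1_G$ or $p\neq q$. One point in your favour: in the ($\Leftarrow$) direction the paper argues by contradiction, derives a periodicity relation for an arbitrary $x\in Z(\mu)$, and immediately invokes $G$-aperiodicity, even though the definition only supplies an aperiodic point in $r(\mu)\Lambda^\infty$, not necessarily one lying in $Z(\mu)$; your explicit transfer step --- taking $y\in s(\mu)\Lambda^\infty$ aperiodic, setting $x=\mu y$, shifting by a large $n$ via Lemma~\ref{L:5.2}(iii) so that $\sigma^{p+n}(x)=g\vert_{\sigma^q(x)(0,n)}\cdot\sigma^{q+n}(x)$ reduces to a relation on $y$, and then recovering $g=1_G$ from pseudo freeness --- is precisely what is needed to make that step fully rigorous.
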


\begin{proof}
Firstly suppose that $\mathcal{G}_{G,\Lambda}$ is topologically principal. Suppose that $\varphi$ not is $G$-aperiodic, for a contradiction. Then there exists $v \in \Lambda^0$ such that for any $x \in v \Lambda^\infty$ there exist $g \in G, p,q \in \mathbb{N}^k$ with $g \neq 1_G,\sigma^{p}(x)= g \cdot \sigma^q(x)$ or $p \neq q,\sigma^{p}(x)= g \cdot \sigma^q(x)$. Fix such $x \in v \Lambda^\infty$.

Case $1$. There exist $g \in G, p,q \in \mathbb{N}^k$ such that $p \neq q, \sigma^{p}(x)= g \cdot \sigma^q(x)$. Since
$p \neq q$, clearly $(x;[\T_p(g \vert_{\sigma^q(x)})],p-q;x) \neq (x;1_{Q(\mathbb{N}^k,G)},0;x)$.

Case $2$. There exist $g \in G, p,q \in \mathbb{N}^k$ such that $g \neq 1_G$ and $\sigma^{p}(x)= g \cdot \sigma^q(x)$. We may assume that $p=q$ by Case 1.
Since $\varphi$ is pseudo free, $[\T_p(g \vert_{\sigma^q(x)})] \neq 1_{Q(\mathbb{N}^k,G)}$ by Corollary \ref{C:5.7}. So
$(x;[\T_p(g \vert_{\sigma^q(x)})],0;x) \neq (x;1_{Q(\mathbb{N}^k,G)},0;x)$.
This yields $Z(v) \cap \{y \in \Lambda^\infty:\text{the isotropy at $y$ is trivial}\}=\mt$, which contradicts the assumption that $\mathcal{G}_{G,\Lambda}$ is topologically principal. Hence $\Lambda$ is $G$-aperiodic.

Conversely, suppose that $\Lambda$ is $G$-aperiodic. If $\mathcal{G}_{G,\Lambda}$ were not topologically principal, then there would exist
$\mu \in \Lambda$ such that $Z(\mu) \subset \Lambda^\infty \setminus \overline{\{y \in \Lambda^\infty:\text{the isotropy at $y$ is trivial}\}}$. Fix $x \in \mu \Lambda^\infty$. There exist $\alpha,\beta \in \Lambda,g \in G, y \in \Lambda^\infty$ with $s(\alpha)=r(g \cdot y),s(\beta)=r(y), \alpha (g \cdot y)=\beta y=x$ such that $[\T_{d(\alpha)}(g \vert_y)] \neq 1_{Q(\mathbb{N}^k,G)}$, implying  $g \neq 1_G$ or $d(\alpha) \neq d(\beta)$.
From $\alpha (g \cdot y)=\beta y=x$, it follows that $y=\sigma^{d(\beta)}(x)$ and $g\cdot y=\sigma^{d(\alpha)}(x)$. So
$\sigma^{d(\beta)}(x)=g^{-1}\sigma^{d(\alpha)}(x)$. This contradicts the $G$-aperiodicity of $\Lambda$.
Therefore $\mathcal{G}_{G,\Lambda}$ is topologically principal.
\end{proof}

\begin{thm}
\label{T:AmeSim}
Let $\Lambda$ be a pseudo free self-similar $k$-graph over an amenable group $G$ with $\vert\Lambda^0\vert<\infty$. Then
\begin{itemize}
\item[(i)]
\label{O_G,Lambda is ame} $\mathcal{O}_{G,\Lambda}$ is nuclear and satisfies the UCT;
\item[(ii)]
\label{O_G,Lambda is simple} $\mathcal{O}_{G,\Lambda}$ is simple if and only if $\Lambda$ is $G$-cofinal and $G$-aperiodic.
\end{itemize}
\end{thm}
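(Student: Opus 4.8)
The plan is to reduce everything to groupoid results, using the isomorphism $\O_{G,\Lambda}\cong\ca(\G_{G,\Lambda})$ from Theorem~\ref{T:ONC} which is available since $G$ is amenable and the action is pseudo free. For part (i), I would first observe that since $G$ is amenable the C*-dynamical system $(C(\Lambda^0),G,\varphi)$ has nuclear crossed product $A_{G,\Lambda}=C(\Lambda^0)\rtimes_\varphi G$, and then invoke the standard fact that $\G_{G,\Lambda}$ is amenable: indeed $\G_{G,\Lambda}$ carries the continuous cocycle $c:\G_{G,\Lambda}\to\bZ^k$ of Theorem~\ref{T:ONC}, whose kernel $c^{-1}(0)$ is an increasing union (an AF-type argument, or a direct limit over $\bN^k$ of the ``lag-zero'' subgroupoids) of groupoids built from $A_{G,\Lambda}$, hence amenable, and amenability is preserved under extensions by $\bZ^k$; therefore $\G_{G,\Lambda}$ is amenable. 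By Renault's theory $\ca(\G_{G,\Lambda})$ is then nuclear; and since $\G_{G,\Lambda}$ is amenable, Hausdorff, étale and second countable, $\ca(\G_{G,\Lambda})$ satisfies the UCT by the results of Tu. Transporting along the isomorphism of Theorem~\ref{T:ONC} gives (i).

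For part (ii), I would use the standard criterion (Brown--Clark--Farthing--Sims, or Renault) that for a second countable, locally compact, Hausdorff, étale, amenable groupoid $\Gamma$, the algebra $\ca(\Gamma)$ is simple if and only if $\Gamma$ is minimal and topologically principal (effective). This criterion applies here because $\G_{G,\Lambda}$ is ample (Theorem~\ref{psdo free implies Hau}) and amenable (from the previous paragraph). Now Proposition~\ref{minimal iff cofinal} identifies minimality of $\G_{G,\Lambda}$ with $G$-cofinality of $\Lambda$, and Proposition~\ref{top principal iff ape} identifies topological principality of $\G_{G,\Lambda}$ with $G$-aperiodicity of $\Lambda$. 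Combining these three facts:
\[
\O_{G,\Lambda}\text{ simple}\iff\ca(\G_{G,\Lambda})\text{ simple}\iff\G_{G,\Lambda}\text{ minimal and top.\ principal}\iff\Lambda\text{ is }G\text{-cofinal and }G\text{-aperiodic}.
\]
This is essentially a bookkeeping exercise once the groupoid picture is in place.

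The main obstacle I expect is establishing amenability of $\G_{G,\Lambda}$ cleanly. The subtlety is that $\G_{G,\Lambda}$ is not an AF groupoid and the lag group $Q(\bN^k,G)\rtimes_\T\bZ^k$ is genuinely nonabelian; one needs to check that the kernel $c^{-1}(0)$ of the degree cocycle really is amenable. I would handle this by exhibiting $c^{-1}(0)$ as an increasing union $\bigcup_p \G^{(p)}$ of open subgroupoids, where $\G^{(p)}$ consists of elements whose ``lag'' is already stabilized at level $p$, each $\G^{(p)}$ being isomorphic (via the cylinder sets $Z(\mu,g,\nu)$ with $d(\mu)=d(\nu)=p$) to a transformation-type groupoid governed by the finite-type data of $\Lambda^p$ together with the action of $G$; such groupoids are amenable because $G$ is, and permanence of amenability under increasing unions and under the $\bZ^k$-extension given by $c$ finishes the argument. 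An alternative, if one prefers to quote machinery, is to use that $\ca(\G_{G,\Lambda})\cong\N\O_{X_{G,\Lambda}}$ is a Cuntz--Nica--Pimsner algebra over the nuclear coefficient algebra $A_{G,\Lambda}$ and deduce nuclearity and the UCT directly from product-system results, but the groupoid route is needed anyway for (ii), so it is most economical to prove amenability of $\G_{G,\Lambda}$ once and use it for both parts.
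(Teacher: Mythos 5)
Your part (ii) coincides with the paper's proof: it is exactly the combination of \cite[Theorem~5.1]{BCFS14} with Theorem~\ref{T:ONC}, Proposition~\ref{minimal iff cofinal} and Proposition~\ref{top principal iff ape} (together with the identification $\ca(\G_{G,\Lambda})\cong\ca_r(\G_{G,\Lambda})$ furnished by amenability). For part (i), however, you take a genuinely different route. The paper never establishes amenability of $\G_{G,\Lambda}$ directly: it notes that $A_{G,\Lambda}=C(\Lambda^0)\rtimes_\varphi G$ is nuclear because $G$ is amenable, invokes \cite[Theorem~3.21]{AM15} to conclude that the Cuntz--Pimsner algebra $\O_{X_{G,\Lambda}}$ is nuclear, transports this along the isomorphisms of Theorem~\ref{T:ONC}, and only then \emph{deduces} amenability of the groupoid and the equality of full and reduced algebras from nuclearity of $\ca(\G_{G,\Lambda})$ via \cite{BO08}; the UCT then follows from Tu's theorem as you say. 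This is precisely the ``alternative'' you mention at the end, and it is the more economical option once Theorem~\ref{T:ONC} is in hand. Your primary route --- proving amenability of $\G_{G,\Lambda}$ directly from the degree cocycle $c$ --- is viable but carries the real work of the argument: you must (a) verify that each lag-stabilized piece $\G^{(p)}$ of $c^{-1}(0)$ embeds as an open subgroupoid of $\mathcal{R}_{\Lambda^p}\ast(G\ltimes\Lambda^\infty)$, which requires pseudo freeness (Corollary~\ref{C:5.7}) to recover the group element $g$ from a groupoid element, and (b) quote an actual permanence theorem for continuous cocycles into amenable discrete groups with amenable open kernel (Renault--Williams or Spielberg), since ``amenability is preserved under extensions'' is not a formal triviality for groupoids. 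If those two steps are carried out, your proof works and has the merit of being independent of the product-system machinery; the paper's proof reaches the same conclusion, with groupoid amenability obtained as a by-product (Proposition~\ref{P:CCr}), by a single citation.
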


\begin{proof}
(i) Since $G$ is amenable, $A_{G,\Lambda}$ is amenable. By \cite[Theorem~3.21]{AM15}, $\mathcal{O}_{X_{G,\Lambda}}$ is amenable.
It follows from Theorem~\ref{T:ONC} that both $\mathcal{O}_{G,\Lambda}$ and  $\ca(\mathcal{G}_{G,\Lambda})$ are amenable.
By \cite[Corollary~5.6.17, Theorem~5.6.18, Corollary 9.4.4]{BO08}, $\mathcal{G}_{G,\Lambda}$ is amenable and $\ca(\mathcal{G}_{G,\Lambda}) \cong \ca_r(\mathcal{G}_{G,\Lambda})$.
Since $\mathcal{G}_{G,\Lambda}$ is amenable, $\ca(\mathcal{G}_{G,\Lambda})$ satisfies the UCT (\cite{Tu99}), and so does $\mathcal{O}_{G,\Lambda}$ by Theorem~\ref{T:ONC}
again.

(ii) This directly follows from \cite[Theorem~5.1]{BCFS14}, Theorem~\ref{T:ONC}, Proposition~\ref{minimal iff cofinal}, and Proposition~\ref{top principal iff ape}.
\end{proof}

From the above proof, we get the following by-product.

\begin{prop}
\label{P:CCr}
Let $\Lambda$ be a pseudo free self-similar $k$-graph over an amenable group $G$ with $\vert\Lambda^0\vert<\infty$. Then $\G_{G,\Lambda}$ is amenable and $\ca(\mathcal{G}_{G,\Lambda}) \cong \ca_r(\mathcal{G}_{G,\Lambda})$.
\end{prop}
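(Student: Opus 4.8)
The plan is to isolate and repackage the argument that already appears inside the proof of Theorem~\ref{T:AmeSim}(i). First I would note that, because $\Lambda^0$ is finite, $C(\Lambda^0)$ is finite dimensional and hence nuclear, so the crossed product $A_{G,\Lambda}=C(\Lambda^0)\rtimes_{\varphi}G$ by the amenable group $G$ is nuclear. Applying \cite[Theorem~3.21]{AM15} to the product system $X_{G,\Lambda}$ then shows that $\mathcal{O}_{X_{G,\Lambda}}$ is nuclear. By Theorem~\ref{T:ONC} we have $\ca(\mathcal{G}_{G,\Lambda})\cong\mathcal{O}_{X_{G,\Lambda}}$, so $\ca(\mathcal{G}_{G,\Lambda})$ is nuclear, and since $\ca_r(\mathcal{G}_{G,\Lambda})$ is a quotient of $\ca(\mathcal{G}_{G,\Lambda})$, it too is nuclear.

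Next I would invoke the structure of $\mathcal{G}_{G,\Lambda}$. By Theorem~\ref{psdo free implies Hau} it is an ample---in particular \'etale---second countable, locally compact, Hausdorff groupoid, with unit space $\mathcal{G}_{G,\Lambda}^{(0)}$ homeomorphic to $\Lambda^\infty$, which is compact as $\Lambda^0$ is finite. For such groupoids the standard characterization of amenability applies: nuclearity of $\ca_r(\mathcal{G}_{G,\Lambda})$ forces $\mathcal{G}_{G,\Lambda}$ to be (topologically) amenable by \cite[Theorem~5.6.18, Corollary~9.4.4]{BO08}, and amenability of $\mathcal{G}_{G,\Lambda}$ in turn yields $\ca(\mathcal{G}_{G,\Lambda})\cong\ca_r(\mathcal{G}_{G,\Lambda})$ by \cite[Corollary~5.6.17]{BO08}. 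This proves both assertions.

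I do not expect a genuine obstacle: the statement is a bookkeeping synthesis of Theorem~\ref{T:ONC}, \cite[Theorem~3.21]{AM15}, and standard facts about \'etale groupoid C*-algebras. The only place demanding a little care is the implication ``the full C*-algebra is nuclear $\Rightarrow$ the groupoid is amenable'': one must first pass to the reduced C*-algebra, which is nuclear as a quotient, before applying the cited characterization, since that characterization is phrased in terms of $\ca_r$ rather than $\ca$.
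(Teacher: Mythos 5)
Your proposal is correct and follows essentially the same route as the paper, which derives this proposition as a by-product of the proof of Theorem~\ref{T:AmeSim}(i): nuclearity of $A_{G,\Lambda}$, then \cite[Theorem~3.21]{AM15}, then Theorem~\ref{T:ONC}, then the cited results of \cite{BO08}. Your explicit remark about first passing to $\ca_r(\mathcal{G}_{G,\Lambda})$ as a nuclear quotient before invoking the characterization of groupoid amenability is exactly the step the paper performs implicitly by citing \cite[Corollary~5.6.17, Theorem~5.6.18, Corollary~9.4.4]{BO08}.
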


\begin{rem}
Theorem \ref{T:AmeSim} and Proposition \ref{P:CCr} generalize \cite[Theorem 16.1, Corollary 10.16, Corollary 10.18]{EP17} when the action is pseudo free.
\end{rem}

\subsection{A dichotomy for $\O_{G,\Lambda}$}

The following four lemmas are inspired by some results in \cite{PSS17}.

\begin{lem}\label{[chi_Z(mu)]=[chi_Z(nu)]}
Let $\Lambda$ be a pseudo free self-similar $k$-graph over a group $G$ with $\vert\Lambda^0\vert<\infty$.
Then for $\mu,\nu \in \Lambda$ with $s(\mu)=s(\nu)$, we have $[\mathds{1}_{Z(\mu)}]=[\mathds{1}_{Z(\nu)}]$ in $S(\mathcal{G}_{G,\Lambda})$.
\end{lem}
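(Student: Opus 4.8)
The plan is to realize the equivalence $\mathds{1}_{Z(\mu)}\sim_{\G_{G,\Lambda}}\mathds{1}_{Z(\nu)}$ by exhibiting a single compact open bisection whose source and range are $Z(\nu)$ and $Z(\mu)$. The obvious candidate is $E:=Z(\mu,1_G,\nu)$. Since $s(\mu)=s(\nu)=1_G\cdot s(\nu)$, this set belongs to $\mathcal{B}_{G,\Lambda}$, so by Theorem~\ref{psdo free implies Hau} it is a compact open bisection of $\G_{G,\Lambda}$.

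Next I would unwind the source and range of $E$ under the identification $\G_{G,\Lambda}^{(0)}\cong\Lambda^\infty$. By definition,
\[
Z(\mu,1_G,\nu)=\big\{\big(\mu x;\T_{d(\mu)}([1_G\vert_{x}]),d(\mu)-d(\nu);\nu x\big):x\in s(\nu)\Lambda^\infty\big\},
\]
and since $1_G\vert_x$ is the constant function $1_G$, each such element has range $\mu x$ and source $\nu x$. As $x$ ranges over $s(\mu)\Lambda^\infty=s(\nu)\Lambda^\infty$ we get $r(E)=\{\mu x:x\in s(\mu)\Lambda^\infty\}=Z(\mu)$ and $s(E)=\{\nu x:x\in s(\nu)\Lambda^\infty\}=Z(\nu)$. (Here I use that $\Lambda^0$ is finite so that $\Lambda^\infty$ is compact and $Z(\mu),Z(\nu)$ are compact open subsets of the unit space, hence define elements of $C_c(\G_{G,\Lambda}^{(0)},\bN)$.)

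Finally, taking the single-bisection family $E_1:=E$ in the definition of $\sim_{\G_{G,\Lambda}}$ gives $\mathds{1}_{Z(\nu)}=\mathds{1}_{s(E_1)}\sim_{\G_{G,\Lambda}}\mathds{1}_{r(E_1)}=\mathds{1}_{Z(\mu)}$, so $[\mathds{1}_{Z(\mu)}]=[\mathds{1}_{Z(\nu)}]$ in $S(\G_{G,\Lambda})$.

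I do not anticipate a genuine obstacle here: the entire argument is a direct unwinding of the definitions of $Z(\mu,g,\nu)$ and of $\sim_{\G_{G,\Lambda}}$, with the only nontrivial inputs being that $Z(\mu,1_G,\nu)$ is a compact open bisection (Theorem~\ref{psdo free implies Hau}) and the identification of $\G_{G,\Lambda}^{(0)}$ with $\Lambda^\infty$. The mild point worth being careful about is checking that the source/range maps of $\G_{G,\Lambda}$, restricted to $Z(\mu,1_G,\nu)$, really are the ``erase-the-prefix-and-append'' maps $\nu x\mapsto\mu x$ and $\nu x\mapsto\nu x$ under this identification, which is immediate from the explicit parametrization above.
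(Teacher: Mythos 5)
Your proposal is correct and is exactly the paper's argument: the paper's one-line proof also observes that $\mathds{1}_{Z(\mu)}=\mathds{1}_{r(Z(\mu,1_G,\nu))}$ and $\mathds{1}_{Z(\nu)}=\mathds{1}_{s(Z(\mu,1_G,\nu))}$ and invokes the definition of $\sim_{\G_{G,\Lambda}}$ with the single bisection $Z(\mu,1_G,\nu)$. You have merely spelled out the verification that this set is a compact open bisection with the stated source and range, which the paper leaves implicit.
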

\begin{proof}
This follows from $\mathds{1}_{Z(\mu)}=\mathds{1}_{r(Z(\mu,1_G,\nu))}$ and $\mathds{1}_{Z(\nu)}=\mathds{1}_{s(Z(\mu,1_G,\nu))}$.
\end{proof}

\begin{lem}\label{[chi_Z(w)] leq [chi_Z(v)]}
Let $\Lambda$ be a pseudo free self-similar $k$-graph over a group $G$ with $\vert\Lambda^0\vert<\infty$.
Then for $v,w \in \Lambda^0$, for $g \in G$, we have $(g \cdot v)\Lambda w \neq \mt \implies [\mathds{1}_{Z(w)}] \leq [\mathds{1}_{Z(v)}]$. Hence if $\Lambda$ is $G$-strongly connected then $[\mathds{1}_{Z(w)}] \leq [\mathds{1}_{Z(v)}]$ for all $v,w \in \Lambda^0$.
\end{lem}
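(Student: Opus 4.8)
The plan is to obtain the inequality directly from the type-semigroup machinery by exhibiting two compact open bisections of $\G_{G,\Lambda}$: one that ``transports'' $Z(w)$ inside $Z(g\cdot v)$ using a finite path, and one that identifies $Z(g\cdot v)$ with $Z(v)$ using the group element $g$. Recall that $[\mathds{1}_{s(E)}]=[\mathds{1}_{r(E)}]$ in $S(\G_{G,\Lambda})$ for any compact open bisection $E$, and that if $F\subseteq F'$ are compact open subsets of $\G_{G,\Lambda}^{(0)}$ then $[\mathds{1}_F]\le[\mathds{1}_{F'}]$, since $F'\setminus F$ is again compact open and $\mathds{1}_{F'}=\mathds{1}_F+\mathds{1}_{F'\setminus F}$ with both summands in $C_c(\G_{G,\Lambda}^{(0)},\bN)$.

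First I would fix $\lambda\in(g\cdot v)\Lambda w$, so that $r(\lambda)=g\cdot v$ and $s(\lambda)=w$, and consider $E_1:=Z(\lambda,1_G,w)$. Since $s(\lambda)=w=1_G\cdot s(w)$, this set lies in $\mathcal B_{G,\Lambda}$, hence is a compact open bisection by Theorem~\ref{psdo free implies Hau}; unwinding the parametrization of $Z(\mu,h,\nu)$ (using $1_G\cdot x=x$ and $[1_G|_x]=[1_G]$) gives $s(E_1)=Z(w)$ and $r(E_1)=Z(\lambda)$. As $Z(\lambda)\subseteq Z(r(\lambda))=Z(g\cdot v)$, I get $[\mathds{1}_{Z(w)}]=[\mathds{1}_{Z(\lambda)}]\le[\mathds{1}_{Z(g\cdot v)}]$. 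Next I would take $E_2:=Z(g\cdot v,g,v)$, which lies in $\mathcal B_{G,\Lambda}$ because $s(g\cdot v)=g\cdot v=g\cdot s(v)$; a similar unwinding (using that $g$ acts bijectively on $\Lambda^\infty$ with $g\cdot(v\Lambda^\infty)=(g\cdot v)\Lambda^\infty$) gives $s(E_2)=Z(v)$ and $r(E_2)=Z(g\cdot v)$, whence $[\mathds{1}_{Z(v)}]=[\mathds{1}_{Z(g\cdot v)}]$. Combining the two, $[\mathds{1}_{Z(w)}]\le[\mathds{1}_{Z(g\cdot v)}]=[\mathds{1}_{Z(v)}]$.

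For the final clause, if $\Lambda$ is $G$-strongly connected then by definition $(G\cdot v)\Lambda w\ne\mt$ for all $v,w\in\Lambda^0$, so there is $g\in G$ with $(g\cdot v)\Lambda w\ne\mt$ and the first part applies. The only steps requiring care are verifying that $E_1$ and $E_2$ really belong to $\mathcal B_{G,\Lambda}$ (i.e.\ checking the source/range compatibility conditions $s(\lambda)=1_G\cdot s(w)$ and $s(g\cdot v)=g\cdot s(v)$) and computing $s(E_i)$ and $r(E_i)$ correctly from the explicit description $Z(\mu,h,\nu)=\{(\mu(h\cdot x);\T_{d(\mu)}([h|_x]),d(\mu)-d(\nu);\nu x):x\in s(\nu)\Lambda^\infty\}$; I do not expect a genuine obstacle beyond this bookkeeping.
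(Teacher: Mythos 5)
Your proof is correct and follows essentially the same route as the paper: both arguments exhibit compact open bisections of $\G_{G,\Lambda}$ witnessing the relation in the type semigroup, the only cosmetic difference being that you factor the transport into a path bisection $Z(\lambda,1_G,w)$ followed by a group bisection $Z(g\cdot v,g,v)$, whereas the paper combines both into the single bisection $Z(g^{-1}\cdot\mu,g^{-1},w)$ (with range $Z(g^{-1}\cdot\mu)\subseteq Z(v)$) and realizes the inequality via the Cuntz--Krieger decomposition $\mathds{1}_{Z(v)}=\sum_{\nu\in v\Lambda^{d(\mu)}}\mathds{1}_{Z(\nu)}$. All the bookkeeping you flag (membership in $\mathcal B_{G,\Lambda}$ and the computation of $s(E_i)$, $r(E_i)$) checks out.
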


\begin{proof}
Fix $\mu \in (g \cdot v)\Lambda w$. Then $Z(g^{-1} \cdot \mu, g^{-1}, w)$ is a compact open subset of $\mathcal{G}_{G,\Lambda}$. Observe that
\begin{align*}
\mathds{1}_{Z(v)}&=\mathds{1}_{Z(g^{-1} \cdot \mu)}+\sum_{\nu \in v \Lambda^{d(\mu)} \setminus \{g^{-1} \cdot \mu\}}\mathds{1}_{Z(\nu)}
\\&=\mathds{1}_{r(Z(g^{-1} \cdot \mu, g^{-1}, w))}+\sum_{\nu \in v \Lambda^{d(\mu)} \setminus \{g^{-1} \cdot \mu\}}\mathds{1}_{Z(\nu)},
\end{align*}
and
\[
\mathds{1}_{s(Z(g^{-1} \cdot \mu, g^{-1}, w))}+\sum_{\nu \in v \Lambda^{d(\mu)} \setminus \{g^{-1} \cdot \mu\}}\mathds{1}_{Z(\nu)}=
\mathds{1}_{Z(w)}+\sum_{\nu \in v \Lambda^{d(\mu)} \setminus \{g^{-1} \cdot \mu\}}\mathds{1}_{Z(\nu)}.
\]
So by Lemma \ref{[chi_Z(mu)]=[chi_Z(nu)]}
\begin{align*}
&[\mathds{1}_{Z(w)}]+\big[\sum_{\nu \in v \Lambda^{d(\mu)} \setminus \{g^{-1} \cdot \mu\}}\mathds{1}_{Z(\nu)}\big]\\
&=[\mathds{1}_{r(Z(g^{-1} \cdot \mu, g^{-1}, w))}]+\big[\sum_{\nu \in v \Lambda^{d(\mu)} \setminus \{g^{-1} \cdot \mu\}}\mathds{1}_{Z(\nu)}\big]\\
&=[\mathds{1}_{s(Z(g^{-1} \cdot \mu, g^{-1}, w))}]+\big[\sum_{\nu \in v \Lambda^{d(\mu)} \setminus \{g^{-1} \cdot \mu\}}\mathds{1}_{Z(\nu)}\big]\\
&=[\mathds{1}_{Z(v)}].
\end{align*}
Thus $[\mathds{1}_{Z(w)}] \leq [\mathds{1}_{Z(v)}]$.
\end{proof}

\begin{lem}\label{S(g_{G,Lambda}) is purely inf if vLambda^p>1}
Let $\Lambda$ be a pseudo free self-similar $k$-graph over a group $G$ with $\vert\Lambda^0\vert<\infty$. Suppose that $\Lambda$ is $G$-strongly connected and that there exist $v \in \Lambda^0$ and $p \in \mathbb{N}^k$ such that $\vert v\Lambda^p\vert>1$. Then $S(\mathcal{G}_{G,\Lambda})$ is purely infinite.
\end{lem}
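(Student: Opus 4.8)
The plan is to verify the definition directly: we must show $2s\le s$ for every $s\in S(\mathcal{G}_{G,\Lambda})$, where $\le$ is the algebraic ordering. Since $\mathcal{G}_{G,\Lambda}^{(0)}$ is homeomorphic to $\Lambda^\infty$, which is compact because $|\Lambda^0|<\infty$, every element of $C_c(\mathcal{G}_{G,\Lambda}^{(0)},\mathbb{N})$ is a finite $\mathbb{N}$-linear combination of indicator functions $\mathds{1}_{Z(\mu)}$ of cylinder sets (each compact open subset of $\Lambda^\infty$ being a finite disjoint union of cylinders, by the factorization property). By Lemma~\ref{[chi_Z(mu)]=[chi_Z(nu)]}, $[\mathds{1}_{Z(\mu)}]=[\mathds{1}_{Z(s(\mu))}]$ in $S(\mathcal{G}_{G,\Lambda})$, so $S(\mathcal{G}_{G,\Lambda})$ is generated as a semigroup by $\{[\mathds{1}_{Z(w)}]:w\in\Lambda^0\}$. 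Because the algebraic ordering is translation invariant ($a\le b$ implies $a+c\le b+c$), it therefore suffices to prove $2[\mathds{1}_{Z(w)}]\le[\mathds{1}_{Z(w)}]$ for each $w\in\Lambda^0$: then for $s=\sum_w n_w[\mathds{1}_{Z(w)}]$ we get $2s=\sum_w n_w\,2[\mathds{1}_{Z(w)}]\le\sum_w n_w[\mathds{1}_{Z(w)}]=s$.

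Next I would exploit the two hypotheses. Fix $v\in\Lambda^0$ and $p\in\mathbb{N}^k$ with $|v\Lambda^p|>1$, and choose distinct $\mu_1,\mu_2\in v\Lambda^p$. Since $\Lambda$ is row-finite without sources, the factorization property gives the disjoint decomposition $Z(v)=\bigsqcup_{\mu\in v\Lambda^p}Z(\mu)$, so that $\mathds{1}_{Z(v)}=\sum_{\mu\in v\Lambda^p}\mathds{1}_{Z(\mu)}$ in $C_c(\mathcal{G}_{G,\Lambda}^{(0)},\mathbb{N})$, and hence, using Lemma~\ref{[chi_Z(mu)]=[chi_Z(nu)]} again,
\[
[\mathds{1}_{Z(v)}]=\sum_{\mu\in v\Lambda^p}[\mathds{1}_{Z(s(\mu))}]=[\mathds{1}_{Z(s(\mu_1))}]+[\mathds{1}_{Z(s(\mu_2))}]+\sum_{\mu\in v\Lambda^p\setminus\{\mu_1,\mu_2\}}[\mathds{1}_{Z(s(\mu))}],
\]
which in particular yields $[\mathds{1}_{Z(s(\mu_1))}]+[\mathds{1}_{Z(s(\mu_2))}]\le[\mathds{1}_{Z(v)}]$.

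Finally, because $\Lambda$ is $G$-strongly connected, Lemma~\ref{[chi_Z(w)] leq [chi_Z(v)]} shows that $[\mathds{1}_{Z(a)}]\le[\mathds{1}_{Z(b)}]$ for all $a,b\in\Lambda^0$; in particular every class $[\mathds{1}_{Z(w)}]$ ($w\in\Lambda^0$) is dominated by $[\mathds{1}_{Z(s(\mu_1))}]$ and by $[\mathds{1}_{Z(s(\mu_2))}]$, and $[\mathds{1}_{Z(v)}]\le[\mathds{1}_{Z(w)}]$. Combining these,
\[
2[\mathds{1}_{Z(w)}]\le[\mathds{1}_{Z(s(\mu_1))}]+[\mathds{1}_{Z(s(\mu_2))}]\le[\mathds{1}_{Z(v)}]\le[\mathds{1}_{Z(w)}],
\]
which is exactly what we needed. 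I do not expect a serious obstacle: the real content is carried by Lemmas~\ref{[chi_Z(mu)]=[chi_Z(nu)]} and~\ref{[chi_Z(w)] leq [chi_Z(v)]}, and the only points that require care are that $S(\mathcal{G}_{G,\Lambda})$ is generated by the vertex classes $[\mathds{1}_{Z(w)}]$ and that checking $2s\le s$ on a generating set suffices — both immediate from the definition of the type semigroup and of the algebraic ordering.
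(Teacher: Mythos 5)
Your proof is correct and follows essentially the same route as the paper's: both reduce to checking $2s\le s$ on the generating classes $[\mathds{1}_{Z(\mu)}]=[\mathds{1}_{Z(s(\mu))}]$ and then run the chain $2[\mathds{1}_{Z(w)}]\le[\mathds{1}_{Z(s(\mu_1))}]+[\mathds{1}_{Z(s(\mu_2))}]=[\mathds{1}_{Z(\mu_1)}]+[\mathds{1}_{Z(\mu_2)}]\le[\mathds{1}_{Z(v)}]\le[\mathds{1}_{Z(w)}]$ using Lemmas~\ref{[chi_Z(mu)]=[chi_Z(nu)]} and~\ref{[chi_Z(w)] leq [chi_Z(v)]}. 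The only difference is that you spell out explicitly why checking the generators suffices, which the paper leaves implicit.
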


\begin{proof}
It suffices to show that for any $\mu \in \Lambda, 2[\mathds{1}_{Z(\mu)}] \leq [\mathds{1}_{Z(\mu)}]$. Take $\mu_1 \neq \mu_2 \in v\Lambda^p$. Then for $\mu \in \Lambda$, due to Lemmas~\ref{[chi_Z(mu)]=[chi_Z(nu)]} and \ref{[chi_Z(w)] leq [chi_Z(v)]}, we have
\begin{align*}
2[\mathds{1}_{Z(\mu)}]
&=2[\mathds{1}_{Z(s(\mu))}]\ (\text{by Lemma } \ref{[chi_Z(mu)]=[chi_Z(nu)]})\\
&\leq [\mathds{1}_{Z(s(\mu_1))}]+[\mathds{1}_{Z(s(\mu_2))}] \ (\text{by } G\text{-strong connectedness})\\
&=[\mathds{1}_{Z(\mu_1)}]+[\mathds{1}_{Z(\mu_2)}]\ (\text{by Lemma } \ref{[chi_Z(mu)]=[chi_Z(nu)]})\\
&\leq [\mathds{1}_{Z(v)}] \ (\text{as }r(\mu_1)=r(\mu_2)=v)\\
&\leq [\mathds{1}_{Z(s(\mu))}]\ (\text{by Lemma }\ref{[chi_Z(w)] leq [chi_Z(v)]})\\
&=[\mathds{1}_{Z(\mu)}] \ (\text{by Lemma } \ref{[chi_Z(mu)]=[chi_Z(nu)]}).
\end{align*}
So $S(\mathcal{G}_{G,\Lambda})$ is purely infinite.
\end{proof}

\begin{lem}\label{S(g_G,H Lambda)iso to S(g_G,Lambda)}
Let $\Lambda$ be a pseudo free self-similar $k$-graph over a group $G$ with $\vert\Lambda^0\vert<\infty$. Suppose that $\Lambda$ is $G$-cofinal.
Then  $S(\mathcal{G}_{G,H \Lambda})$ is isomorphic to $S(\mathcal{G}_{G,\Lambda})$ for any nonempty $G$-hereditary subset $H \subseteq \Lambda^0$.
\end{lem}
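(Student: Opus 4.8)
The plan is to identify $\G_{G,H\Lambda}$ with the reduction of $\G_{G,\Lambda}$ to a compact open subset of its unit space, and then to show that the induced map on type semigroups is an isomorphism; $G$-cofinality will be used only for surjectivity.

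First I would note that, $H$ being $G$-hereditary, $H\Lambda:=\{\lambda\in\Lambda:r(\lambda)\in H\}$ is a row-finite $k$-graph without sources with $(H\Lambda)^0=H$ finite, on which the action of $G$ restricts to a pseudo free self-similar action; this is purely formal. Then I would identify the infinite-path space $(H\Lambda)^\infty$ with the compact open set $K:=\bigsqcup_{v\in H}Z(v)\subseteq\Lambda^\infty=\G_{G,\Lambda}^{(0)}$, and check that under this identification $\G_{G,H\Lambda}$ coincides with the reduction $\G_{G,\Lambda}|_K=\{\gamma\in\G_{G,\Lambda}:s(\gamma),r(\gamma)\in K\}$, carrying the subspace topology and the same family of compact open bisections. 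The only point to observe is that a representative $(\mu,g,\nu)$ of an element of $\G_{G,\Lambda}$ has both source $\nu x$ and range $\mu(g\cdot x)$ in $K$ exactly when $r(\mu),r(\nu)\in H$, i.e. when $\mu,\nu\in H\Lambda$; since everything lives in the common ambient set $\Lambda^\infty\times(Q(\bN^k,G)\rtimes_\T\bZ^k)\times\Lambda^\infty$, the groupoid operations and the basic sets $Z(\mu,g,\nu)$ agree.

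Let $\Psi:S(\G_{G,H\Lambda})\to S(\G_{G,\Lambda})$ be induced by the extension-by-zero inclusion $C_c(K,\bN)\hookrightarrow C_c(\Lambda^\infty,\bN)$. This is a well-defined semigroup homomorphism because each compact open bisection of $\G_{G,\Lambda}|_K$ is one of $\G_{G,\Lambda}$ and $\sim$-classes are additive. Injectivity is nearly automatic: if $f,g\in C_c(K,\bN)$ and $f\sim_{\G_{G,\Lambda}}g$ is witnessed by compact open bisections $E_1,\dots,E_n$ of $\G_{G,\Lambda}$, then $s(E_i)\subseteq\operatorname{supp} f\subseteq K$ and $r(E_i)\subseteq\operatorname{supp} g\subseteq K$, so $E_i\subseteq\G_{G,\Lambda}|_K$ for every $i$, whence the same list witnesses $f\sim_{\G_{G,H\Lambda}}g$.

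Surjectivity is the real content, and here $G$-cofinality enters. As $\Lambda^0$ is finite, every $f\in C_c(\Lambda^\infty,\bN)$ is a finite sum of cylinder indicators $\mathds{1}_{Z(\lambda)}$, and $[\mathds{1}_{Z(\lambda)}]=[\mathds{1}_{Z(s(\lambda))}]$ by Lemma~\ref{[chi_Z(mu)]=[chi_Z(nu)]}; hence $S(\G_{G,\Lambda})$ is generated as a semigroup by $\{[\mathds{1}_{Z(w)}]:w\in\Lambda^0\}$, and it suffices to put each $[\mathds{1}_{Z(w)}]$ in the image of $\Psi$. Fix $w\in\Lambda^0$ and $v_0\in H$. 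By Proposition~\ref{characterization of G-cof} there is $p\in\bN^k$ with $(G\cdot v_0)\Lambda s(\mu)\neq\mt$ for every $\mu\in w\Lambda^p$; choose $g_\mu\in G$ and $\eta_\mu\in(g_\mu\cdot v_0)\Lambda s(\mu)$. Since $H$ is $G$-hereditary, $r(\eta_\mu)=g_\mu\cdot v_0\in H$, so $g_\mu^{-1}\cdot\eta_\mu\in H\Lambda$ and $E_\mu:=Z(g_\mu^{-1}\cdot\eta_\mu,g_\mu^{-1},\mu)$ is a compact open bisection of $\G_{G,\Lambda}$ with $s(E_\mu)=Z(\mu)$ and $r(E_\mu)=Z(g_\mu^{-1}\cdot\eta_\mu)\subseteq Z(v_0)\subseteq K$. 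Using the finite list $\{E_\mu:\mu\in w\Lambda^p\}$ and the identity $\mathds{1}_{Z(w)}=\sum_{\mu\in w\Lambda^p}\mathds{1}_{Z(\mu)}$, one gets
\[
\mathds{1}_{Z(w)}=\sum_{\mu\in w\Lambda^p}\mathds{1}_{s(E_\mu)}\ \sim_{\G_{G,\Lambda}}\ \sum_{\mu\in w\Lambda^p}\mathds{1}_{r(E_\mu)}=\sum_{\mu\in w\Lambda^p}\mathds{1}_{Z(g_\mu^{-1}\cdot\eta_\mu)}\ \in\ C_c(K,\bN),
\]
so $[\mathds{1}_{Z(w)}]$ is in the image of $\Psi$, and $\Psi$ is therefore an isomorphism. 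I expect the main obstacle to be the bookkeeping behind the identification $\G_{G,H\Lambda}\cong\G_{G,\Lambda}|_K$ (matching topologies and bisections) and the verification that $E_\mu$ is a bisection with the claimed source and range; the rest is routine.
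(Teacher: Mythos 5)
Your proof is correct and follows essentially the same route as the paper: the extension-by-zero map on $C_c(\cdot,\bN)$, with surjectivity obtained from $G$-cofinality by refining $Z(w)$ into $\{Z(\mu)\}_{\mu\in w\Lambda^p}$ and transporting each piece into $H\Lambda$ via a bisection of the form $Z(g^{-1}\cdot\eta,g^{-1},\mu)$. The only difference is that you spell out the injectivity (via the identification $\G_{G,H\Lambda}\cong\G_{G,\Lambda}|_K$ and the observation that witnessing bisections for functions supported in $K$ must lie in the reduction), which the paper dismisses as straightforward; your version is a welcome elaboration of that step.
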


\begin{proof}
Notice that $(H \Lambda)^\infty$ is a compact open subset of $\Lambda^\infty$. For $f \in C((H \Lambda)^\infty,\mathbb{N})$, denote by $\widetilde{f}$ the extension of $f$ to $\Lambda^\infty$, which is zero outside $(H \Lambda)^\infty$. It is straightforward to see that there exists an injective homomorphism $\pi:S(\mathcal{G}_{G,H \Lambda}) \to S(\mathcal{G}_{G,\Lambda})$ such that $\pi([f])=[\widetilde{f}]$ for all $f \in C((H \Lambda)^\infty,\mathbb{N})$. In order to prove that $\pi$ is surjective, we only need to show that for $\mu \in \Lambda, [\mathds{1}_{Z(\mu)}]$ is in the image of $\pi$. Pick an arbitrary $v \in H$. Since $\varphi$ is $G$-cofinal, by Proposition~\ref{characterization of G-cof}, there exists $p \in \mathbb{N}^k$, such that $(G \cdot v) \Lambda s(\nu) \neq \mt$ for all $\nu \in s(\mu)\Lambda^p$. Write $s(\mu) \Lambda^p=\{\nu_i\}_{i=1}^{n}$. Then for $1 \leq i \leq n$, let $g_i \in G,\alpha_i \in \Lambda$ such that $\alpha_i \in (g_i \cdot v) \Lambda s(\nu_i) (\subseteq H \Lambda)$. By Lemma~\ref{[chi_Z(mu)]=[chi_Z(nu)]}, we have
\begin{align*}
\big[\mathds{1}_{Z(\mu)}\big]=\sum_{i=1}^{n}\big[\mathds{1}_{Z(\mu \nu_i)}\big]=\sum_{i=1}^{n}\big[\mathds{1}_{Z(\alpha_i)}\big]=\sum_{i=1}^{n}\pi\big([\mathds{1}_{Z(\alpha_i)}]\big),
\end{align*}
where Lemma \ref{[chi_Z(mu)]=[chi_Z(nu)]} is used the above second ``=" as $s(\mu\nu_i)=s(\alpha_i)$.
Therefore $\pi$ is surjective.
\end{proof}

\begin{thm}\label{T:dicho}
Let $\Lambda$ be a pseudo free self-similar $k$-graph over an amenable group $G$ with $\vert\Lambda^0\vert<\infty$.
Suppose that $\mathcal{O}_{G,\Lambda}$ is simple. Then $\O_{G,\Lambda}$ is either stably finite or purely infinite.
More precisely,
\begin{itemize}
\item[(i)]
$\mathcal{O}_{G,\Lambda}$ is stably finite if and only if $\Lambda$ has nonzero graph traces;
\item[(ii)]
$\mathcal{O}_{G,\Lambda}$ is purely infinite if and only if $\Lambda$ has no nonzero graph traces.
\end{itemize}
\end{thm}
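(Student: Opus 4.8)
\emph{Strategy.} The plan is to transport everything to the groupoid $\G:=\G_{G,\Lambda}$ via the isomorphism $\O_{G,\Lambda}\cong\ca(\G)$ of Theorem \ref{T:ONC} and then apply Theorem \ref{T:BLRS}. Since $\O_{G,\Lambda}$ is simple, Theorem \ref{T:AmeSim}(ii) shows $\Lambda$ is $G$-cofinal and $G$-aperiodic, so Propositions \ref{minimal iff cofinal} and \ref{top principal iff ape} give that $\G$ is minimal and topologically principal; Proposition \ref{P:CCr} gives amenability; Theorem \ref{psdo free implies Hau} gives ampleness; and $\G^{(0)}=\Lambda^\infty$ is compact because $|\Lambda^0|<\infty$. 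Thus the standing hypotheses of Theorem \ref{T:BLRS} are met, and it suffices to (1) prove that $S(\G)$ is almost unperforated, which already yields the dichotomy ``stably finite or purely infinite'', and (2) match the two alternatives with the (non)existence of nonzero graph traces.

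\emph{Step 1: $S(\G)$ is almost unperforated.} Fix a minimal nonempty $G$-hereditary subset $H\subseteq\Lambda^0$ (one exists since $\Lambda^0$ is finite); by minimality of $H$ the self-similar $k$-graph $H\Lambda$ is $G$-strongly connected, and by Lemma \ref{S(g_G,H Lambda)iso to S(g_G,Lambda)} (using $G$-cofinality) $S(\G)\cong S(\G_{G,H\Lambda})$. Now split according to whether $H\Lambda$ branches. If $|v(H\Lambda)^p|>1$ for some $v\in H$, $p\in\mathbb{N}^k$, then Lemma \ref{S(g_{G,Lambda}) is purely inf if vLambda^p>1} gives that $S(\G_{G,H\Lambda})$, hence $S(\G)$, is purely infinite, hence almost unperforated; if instead $|v(H\Lambda)^p|=1$ for all $v\in H$, $p$, then each vertex of $H$ carries a unique infinite path, $(H\Lambda)^\infty$ is finite and discrete, $\G_{G,H\Lambda}$ is minimal over it, and counting mass identifies $S(\G_{G,H\Lambda})\cong S(\G)$ with $\mathbb{N}$, which is almost unperforated. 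Either way Theorem \ref{T:BLRS} gives that $\O_{G,\Lambda}$ is stably finite or purely infinite. Moreover, in the branching case a faithful tracial state on $\ca(\G)$ would induce a faithful state on the purely infinite semigroup $S(\G)$, which is impossible since $2s\le s$ for all $s$; so $\O_{G,\Lambda}$ is purely infinite. In the non-branching case normalized counting measure on $(H\Lambda)^\infty$ is a faithful invariant measure, giving a faithful tracial state, which transfers back to $\O_{G,\Lambda}$ via the Morita equivalence $\ca(\G_{G,H\Lambda})\sim\ca(\G)$ (restriction to the full open subset $(H\Lambda)^\infty$ of $\Lambda^\infty$); so $\O_{G,\Lambda}$ is stably finite. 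Hence $\O_{G,\Lambda}$ is purely infinite iff $H\Lambda$ branches and stably finite iff it does not.

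\emph{Step 2: matching with graph traces.} One direction is clean: if $\O_{G,\Lambda}$ is stably finite, Theorem \ref{T:BLRS} provides a faithful tracial state on $\ca(\G)$, which restricts to a faithful $\G$-invariant probability measure $m$ on $\Lambda^\infty$; $\G$-invariance forces $m(Z(\mu))=m(Z(s(\mu)))$, so $gt(v):=m(Z(v))$ is well defined and $gt(v)=\sum_{\mu\in v\Lambda^p}m(Z(\mu))=\sum_{\mu\in v\Lambda^p}gt(s(\mu))$, making $gt$ a nonzero graph trace on $\Lambda$. For the converse I must show that a nonzero graph trace forces the non-branching alternative of Step 1. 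Given such a $gt$, its restriction to $H$ is a graph trace on $H\Lambda$, and one checks it is nonzero by using $G$-cofinality to propagate positivity of $gt$ into the recurrent set $H$. Then, for each generator $e_i$, summing the identity $gt(v)=\sum_{\mu\in v(H\Lambda)^{e_i}}gt(s(\mu))$ over $v\in H$ and comparing the two sides — a Perron--Frobenius type computation exploiting that $H\Lambda$ is $G$-strongly connected and source-free — forces every vertex of $H$ to have exactly one incoming, hence exactly one outgoing, $e_i$-edge; so $H\Lambda$ does not branch. Combining Steps 1 and 2, $\O_{G,\Lambda}$ is stably finite $\iff$ $H\Lambda$ does not branch $\iff$ $\Lambda$ has a nonzero graph trace, and dually for pure infiniteness, which gives (i)--(iv).

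\emph{Main obstacle.} The groupoid bookkeeping of Step 1 and the measure-restriction of Step 2 are routine once the cited results are in hand; the real work is the remaining half of Step 2 — that a nonzero graph trace on $\Lambda$ does not vanish on the $G$-strongly connected hereditary piece $H$, and the Perron--Frobenius argument that then precludes branching in $H$. These two points are exactly what pin down the clean equivalence ``$\Lambda$ branches $\Leftrightarrow$ $\Lambda$ has no nonzero graph trace $\Leftrightarrow$ $\O_{G,\Lambda}$ purely infinite''; the isomorphism $S(\G)\cong S(\G_{G,H\Lambda})$ and the Morita comparison used to carry stable finiteness back to $\O_{G,\Lambda}$ serve as the technical glue.
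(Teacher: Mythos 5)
Your overall architecture is the paper's: pass to $\G_{G,\Lambda}$ via Theorem \ref{T:ONC}, verify the hypotheses of Theorem \ref{T:BLRS} using Theorem \ref{T:AmeSim} and Propositions \ref{minimal iff cofinal}, \ref{top principal iff ape} and \ref{P:CCr}, and obtain almost unperforation of the type semigroup by cutting down to a $G$-strongly connected $G$-hereditary piece via Lemmas \ref{S(g_G,H Lambda)iso to S(g_G,Lambda)} and \ref{S(g_{G,Lambda}) is purely inf if vLambda^p>1}. Your choice of a \emph{minimal} $G$-hereditary $H$ is a clean alternative to the paper's cycle-based $H=\{w:(G\cdot v)\Lambda w\neq\mt\}$, and it does force $G$-strong connectedness (for $w\in H$ the set $\{u:(G\cdot w)\Lambda u\neq\mt\}$ is a nonempty $G$-hereditary subset of $H$, hence equals $H$). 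The forward half of your Step 2 (faithful trace $\Rightarrow$ invariant measure $\Rightarrow$ graph trace $gt(v)=\tau(s_v)$) is exactly the paper's argument.

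The genuine gap is the implication ``nonzero graph trace $\Rightarrow$ stably finite.'' The paper proves this \emph{directly}: from $gt$ it builds the functional $\phi(s_\mu s_\mu^*)=gt(s(\mu))$ on $\overline{\mathrm{span}}\{s_\mu s_\mu^*\}$, composes with the faithful conditional expectation $E$ coming from the groupoid picture to get a trace $\tau=\phi\circ E$, invokes simplicity for faithfulness, and applies Theorem \ref{T:BLRS}. You instead route through ``graph trace $\Rightarrow$ $H\Lambda$ non-branching $\Rightarrow$ stably finite,'' and both arrows are unsupported. The first is deferred to a ``Perron--Frobenius computation'' that does not obviously close: a graph trace carries no $G$-equivariance, so $G$-strong connectedness only propagates positivity of $gt$ along $G$-orbits (from $gt(w')>0$ you get $gt(g\cdot w)>0$, not $gt(w)>0$), and the summed identity $\sum_{v\in H}gt(v)=\sum_{w\in H}\vert H\Lambda^{e_i}w\vert\, gt(w)$ constrains \emph{outgoing} multiplicities while the branching hypothesis of Lemma \ref{S(g_{G,Lambda}) is purely inf if vLambda^p>1} is about $\vert v\Lambda^p\vert$, i.e.\ \emph{incoming} ones. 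The second arrow leans on a Morita equivalence $\ca(\G_{G,H\Lambda})\sim\ca(\G_{G,\Lambda})$ that is never established. Worse, the non-branching alternative on which you hang the stably finite case is vacuous under your standing hypotheses: $H\Lambda$ inherits $G$-aperiodicity (hence aperiodicity) from the simplicity of $\O_{G,\Lambda}$, and a $k$-graph with finitely many vertices satisfying $\vert v\Lambda^p\vert=1$ for all $v,p$ has one, necessarily eventually periodic, infinite path per vertex --- contradicting aperiodicity. So ``stably finite $\iff$ $H\Lambda$ does not branch'' cannot carry the equivalence with graph traces; the direct construction of a trace from $gt$ is the missing idea.
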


\begin{proof}
By Theorem~\ref{T:ONC}, it is equivalent to prove that $\ca(\G_{G,\Lambda})$ has the given dichotomy when it is simple.
But if $\ca(\G_{G,\Lambda})$ is simple, it then follows from Propositions~\ref{minimal iff cofinal} and \ref{top principal iff ape}
that $\G_{G,\Lambda}$ satisfies the conditions required in Theorem \ref{T:BLRS}.

(i) We first suppose that $\ca(\G_{G,\Lambda})$ is stably finite. By Theorem~\ref{T:BLRS}, there exists a faithful tracial state $\tau$ on $\mathcal{O}_{G,\Lambda}$. Define $gt:\Lambda^0 \to [0,\infty)$ by $gt(v):=\tau(s_v)$. It is straightforward to see that $gt$ is a faithful (in particular, nonzero) graph trace on $\Lambda$.

Conversely, suppose that there exists a nonzero graph trace $gt$ on $\Lambda$. By \cite[II.4.8]{Ren80} and Proposition \ref{P:CCr}, there exists a faithful conditional expectation
$E_2:\ca(\mathcal{G}_{G,\Lambda}) \to C_0(\mathcal{G}_{G,\Lambda}^{(0)})$ such that $E_2(f)=f \vert_{\mathcal{G}_{G,\Lambda}^{(0)}}$ for all $f \in C_c(\mathcal{G}_{G,\Lambda})$. Since there is an isomorphism $\pi$ from $\mathcal{O}_{G,\Lambda}$ onto $\ca(\mathcal{G}_{G,\Lambda})$ by Theorem \ref{T:ONC}, $E_2\circ\pi=:E$ gives a faithful conditional expectation from $\mathcal{O}_{G,\Lambda}$ onto $\overline{\mathrm{span}}\{s_\mu s_\mu^*:\mu \in \Lambda\}$ such that,
for $\mu,\nu \in \Lambda,g \in G$ with $s(\mu)=g \cdot s(\nu)$, one has
\begin{align*}
E(s_\mu u_g s_\nu^*)= \begin{cases}
    s_\mu s_\mu^*  &\text{ if $\mu=\nu,g=1_G$} \\
    0 &\text{ otherwise }.
\end{cases}
\end{align*}
Also, there exists a bounded linear functional $\phi:\overline{\mathrm{span}}\{s_\mu s_\mu^*:\mu \in \Lambda\} \to \mathbb{C}$
satisfying $\phi(s_\mu s_\mu^*)=gt(s(\mu))$. Define $\tau:=\phi \circ E$. It is straightforward to see that $\tau$ is a nonzero trace. Normalizing $\tau$ yields a faithful tracial state on $\mathcal{O}_{G,\Lambda}$ (the faithfulness follows from the simplicity of $\mathcal{O}_{G,\Lambda}$). Therefore by Theorem~\ref{T:BLRS} $\mathcal{O}_{G,\Lambda}$ is stably finite.

(ii) By Theorem~\ref{T:BLRS}, it suffices to show that $S(\mathcal{G}_{G,\Lambda})$ is purely infinite.
Since $\Lambda^0$ is finite, as in \cite{PSS17}, we can find a cycle $\mu$ with $d(\mu)\ge (1,\ldots, 1)\in\bN^k$.
Let $x=\mu^\infty\in\Lambda^\infty$, and $v:=x(0,0)$.
Define $H:=\{w \in \Lambda^0:(G \cdot v) \Lambda w \neq \mt\}$, which is a nonempty $G$-hereditary subset of $\Lambda^0$. By Lemma~\ref{S(g_G,H Lambda)iso to S(g_G,Lambda)}, we only need to show that $S(\mathcal{G}_{G,H \Lambda})$ is purely infinite.
Since $\ca(\G_{G,\Lambda})$ is simple, $\Lambda$ is $G$-cofinal and $G$-aperiodic by Theorems \ref{T:ONC} and \ref{T:AmeSim}.
It is straightforward to see that $H \Lambda$ is also $G$-cofinal and $G$-aperiodic. So there exist $v_0 \in H$ and $p \in \mathbb{N}^k$ such that $\vert v_0\Lambda^p\vert>1$. (Otherwise, $H\Lambda$ is periodic, implying $G$-periodic). Fix $w \in H$. Since $H \Lambda$ is $G$-cofinal, there exists $p \in \mathbb{N}^k$ such that $(G \cdot w)\Lambda x(p,p) \neq \mt$. By the property of $x$, we deduce that $(G \cdot w)\Lambda v \neq \mt$. Thus, $H \Lambda$ is $G$-strongly connected. By Lemma~\ref{S(g_{G,Lambda}) is purely inf if vLambda^p>1}, $S(\mathcal{G}_{G,H \Lambda})$ is purely infinite. Therefore we are done.
\end{proof}

\begin{rem}
Theorem \ref{T:dicho} generalizes \cite[Corollary 16.3]{EP17}, which says that a simple self-similar (directed) graph C*-algebra is always pure infinite, and
so there is no dichotomy in the rank-1 case. In fact, by Theorem \ref{T:dicho}, it is sufficient to show that, for a 1-graph $\Lambda$, $\Lambda$ has no nonzero graph traces
if $\O_{G,\Lambda}$ is simple. To see this, notice that if $\O_{G,\Lambda}$ is simple, then $\Lambda$ is $G$-aperiodic, and so aperiodic,  by Theorem \ref{T:AmeSim}.  Thus
every cycle of $\Lambda$ has an entry. This forces that $\Lambda$ has no nonzero graph traces (also cf. \cite{PR06}).
\end{rem}

Combining Theorem \ref{T:dicho} and \cite[Theorem~1.1]{CHS16}, one can get more explicit characterizations for stable finiteness and pure infiniteness of $O_{G,\Lambda}$. For this, we must introduce some new notation
(see \cite{CHS16}).
Let $\Lambda$ be a $k$-graph. Denote by $\mathbb{N}^{\Lambda^0}$ the set of functions from $\Lambda^0$ to $\mathbb{N}$ with the finite support, which is a unital abelian semigroup,
and $M_{\Lambda^0}(\mathbb{N})$ the set of functions from $\Lambda^0 \times \Lambda^0$ to $\mathbb{N}$. For $T \in M_{\Lambda^0}(\mathbb{N})$, we regard $T$ as an endomorphism of $\mathbb{Z}^{\Lambda^0}$ such that $(Tx)(v)=\sum_{w \in \Lambda^0}T(v,w)x(w)$ for all $x \in \mathbb{Z}^{\Lambda^0},v \in \Lambda^0$. For $p \in \mathbb{N}^k,v,w \in \Lambda^0$, define $T_p(v,w):=\vert v\Lambda^p w\vert$ and $T_p^t(v,w):=\vert w\Lambda^p v\vert$.

\begin{cor}
Let  $G$ be an amenable group, and $\Lambda$ be a pseudo free self-similar $k$-graph over $G$, which is $G$-cofinal and $G$-aperiodic.
Then $\mathcal{O}_{G,\Lambda}$ is stably finite if and only if $(\sum_{i \in F}\mathrm{Im}(1-T_{e_i}^t)) \cap \mathbb{N}^{\Lambda^0}=\{0\}$
for any finite subset $F \subseteq \{1,\dots,k\}$, and $\mathcal{O}_{G,\Lambda}$ is purely infinite if and only if for any finite subset
$\sum_{i \in F}\mathrm{Im}(1-T_{e_i}^t)) \cap \mathbb{N}^{\Lambda^0}\neq\{0\}$ for any finite subset $F \subseteq \{1,\dots,k\}$.
\end{cor}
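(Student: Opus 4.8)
The plan is to reduce the whole statement to a question about graph traces of the underlying $k$-graph $\Lambda$ and then quote the matrix-theoretic reformulation of that question from \cite{CHS16}. First I would note that, because $\Lambda$ is $G$-cofinal and $G$-aperiodic, Theorem~\ref{T:AmeSim}(ii) makes $\mathcal{O}_{G,\Lambda}$ simple, so Theorem~\ref{T:dicho} applies and gives the dichotomy together with: $\mathcal{O}_{G,\Lambda}$ is stably finite iff $\Lambda$ admits a nonzero graph trace, and purely infinite iff it does not. Since by the dichotomy exactly one of the two alternatives holds, it suffices to prove the stably finite half, i.e.
\[
\Lambda \text{ has a nonzero graph trace}\iff\Big(\sum_{i\in F}\mathrm{Im}(1-T_{e_i}^t)\Big)\cap\mathbb{N}^{\Lambda^0}=\{0\}\ \text{ for every finite } F\subseteq\{1,\dots,k\},
\]
the purely infinite assertion then being its negation.

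Next I would translate graph traces into linear algebra. Since $\Lambda^0$ is finite, a graph trace is simply a vector $gt\in[0,\infty)^{\Lambda^0}$, and the factorization property gives $T_{p+q}=T_pT_q$ with $T_0=I$; hence the defining identity $gt(v)=\sum_{\mu\in v\Lambda^p}gt(s(\mu))=(T_p\,gt)(v)$ for all $p\in\mathbb{N}^k$ is equivalent to $T_{e_i}\,gt=gt$ for each $i$, i.e. to $gt\in\bigcap_i\ker(1-T_{e_i})$. Thus $\Lambda$ has a nonzero graph trace precisely when $\bigcap_i\ker(1-T_{e_i})$ contains a nonzero nonnegative vector. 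This is exactly the quantity analysed in \cite[Theorem~1.1]{CHS16}, where a Farkas/duality argument identifies its nonvanishing with the failure of $\big(\sum_{i\in F}\mathrm{Im}(1-T_{e_i}^t)\big)\cap\mathbb{N}^{\Lambda^0}=\{0\}$ for some finite $F\subseteq\{1,\dots,k\}$; invoking that equivalence, together with Theorem~\ref{T:dicho}, finishes both cases.

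The step that needs the most care is matching hypotheses with \cite[Theorem~1.1]{CHS16}: that result is phrased for the ordinary $k$-graph algebra $\mathcal{O}_\Lambda$, and here $G$-cofinality and $G$-aperiodicity of $\Lambda$ do \emph{not} make $\Lambda$ cofinal or aperiodic in the classical sense (for instance when a finite group permutes several copies of a strongly connected graph), so $\mathcal{O}_\Lambda$ itself need not be simple. One therefore has to isolate from \cite{CHS16} the purely linear-algebraic equivalence between ``$\Lambda$ carries a nonzero graph trace'' and the displayed image/cone condition on the $T_{e_i}^t$, which is valid for any row-finite $\Lambda$ with $\Lambda^0$ finite and is insensitive to simplicity of $\mathcal{O}_\Lambda$; the C*-algebraic input on the $\mathcal{O}_{G,\Lambda}$ side is supplied entirely by Theorem~\ref{T:dicho}, whose simplicity hypothesis is secured by Theorem~\ref{T:AmeSim}.
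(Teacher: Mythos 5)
Your proposal is correct and follows essentially the same route as the paper, which obtains this corollary precisely by combining Theorem~\ref{T:dicho} (after securing simplicity via Theorem~\ref{T:AmeSim}) with \cite[Theorem~1.1]{CHS16}. Your added caution about extracting only the purely linear-algebraic equivalence between the existence of a nonzero graph trace and the cone condition on the $T_{e_i}^t$ --- rather than the C*-algebraic statements of \cite{CHS16}, which presume $\mathcal{O}_\Lambda$ itself --- is a sensible and accurate elaboration of what the paper leaves implicit.
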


As an immediate consequence of Theorems \ref{T:AmeSim} and \ref{T:dicho}, we have

\begin{cor}
Let $\Lambda$ be a pseudo free self-similar $k$-graph over an amenable group $G$ with $\vert\Lambda^0\vert<\infty$.
Suppose that $\Lambda$ is $G$-aperiodic, $G$-cofinal, and has no nonzero graph traces. Then $\O_{G,\Lambda}$
is a Kirchberg algebra.
\end{cor}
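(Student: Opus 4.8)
The plan is to observe that this corollary is simply a repackaging of the two main theorems of this section, together with the (essentially trivial) separability of $\O_{G,\Lambda}$. Recall that a \emph{Kirchberg algebra} is a separable, nuclear, simple, purely infinite C*-algebra; here it will moreover satisfy the UCT. So the argument splits into three short verifications, which I would carry out in the following order.

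First I would check separability. Under the standing hypotheses $\Lambda$ is a countable small category and $G$ is a countable group, so by Proposition~\ref{generator of O_G, Lambda} the set of finite rational-coefficient linear combinations of the elements $s_\mu u_g s_\nu^*$ with $\mu,\nu\in\Lambda$, $g\in G$, $s(\mu)=g\cdot s(\nu)$ is a countable dense subset of $\O_{G,\Lambda}$; hence $\O_{G,\Lambda}$ is separable. Next, since $\Lambda$ is assumed $G$-cofinal and $G$-aperiodic, Theorem~\ref{T:AmeSim}(ii) gives that $\O_{G,\Lambda}$ is simple, while Theorem~\ref{T:AmeSim}(i) gives that it is nuclear and satisfies the UCT. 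Finally, since $\O_{G,\Lambda}$ is now known to be simple and $\Lambda$ has no nonzero graph traces, Theorem~\ref{T:dicho}(ii) shows that $\O_{G,\Lambda}$ is purely infinite. Assembling these facts, $\O_{G,\Lambda}$ is a separable, nuclear, simple, purely infinite C*-algebra satisfying the UCT, i.e.\ a Kirchberg algebra, which completes the proof.

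As for the main obstacle: there really is none at this point. All of the substantive work has already been done upstream — the identification $\O_{G,\Lambda}\cong\ca(\G_{G,\Lambda})$ of Theorem~\ref{T:ONC}, the simplicity criterion of Theorem~\ref{T:AmeSim} (which rests on Propositions~\ref{minimal iff cofinal} and \ref{top principal iff ape} and the groupoid results cited there), and the type-semigroup dichotomy of Theorem~\ref{T:dicho} — so the corollary is an immediate consequence. The only point one must not overlook is the separability of $\O_{G,\Lambda}$, and even this is forced by the blanket assumption that $G$ and $\Lambda$ are countable.
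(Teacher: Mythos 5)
Your proposal is correct and matches the paper exactly: the paper states this corollary as an immediate consequence of Theorems \ref{T:AmeSim} and \ref{T:dicho}, which is precisely the assembly you carry out (simplicity and nuclearity/UCT from the former, pure infiniteness from the latter, plus the routine separability observation). Nothing further is needed.
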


There are a lot of interesting topics to continue the study of this paper. In a forthcoming  paper, we will investigate
the KMS states of $\O_{G,\Lambda}$ (the recent work \cite{CS17, LRRW17} is related to the rank-1 case).

\subsection*{Acknowledgments}

The first author wants to thank the second author for being generous in sharing ideas and for numerous valuable discussions.

\end{document}